\newtheorem{defn}{Definition}[section]
\newtheorem{theo}[defn]{Theorem}
\newtheorem{lem}[defn]{Lemma}
\newtheorem{prop}[defn]{Proposition}
\newtheorem{cor}[defn]{Corollary}
\newtheorem{rem}[defn]{Remark}
\newtheorem{exam}[defn]{Example}
\newenvironment{proof}{{\bf Proof }}{{\vskip 0.1cm \hfill$\Box$}}
\def\N {{\mathbb N}}
\def\R {{\mathbb R}}
\def\E{{\mathbb E}}
\def\P{{\mathbb P}}
\def\M{{\mathbb M}}
\begin{document}


\noindent
{{\Large\bf  Existence and uniqueness of (infinitesimally) invariant measures for second order partial differential operators on Euclidean space}
{\footnote{The research of Haesung Lee was supported by Basic Science Research Program through the National Research Foundation of Korea (NRF) funded by the Ministry of Education (2020R1A6A3A01096151). The research of Gerald Trutnau was supported by the Basic Science Research Program through the National Research Foundation of Korea (NRF) funded by the Ministry of Education (2017R1D1A1B03035632).}} \\ \\
\bigskip
\noindent
{\bf Haesung Lee}, {\bf Gerald Trutnau} \\
\noindent
{\small{\bf Abstract.}  
We consider a locally uniformly strictly elliptic second order partial differential operator in $\R^d$, $d\ge 2$, with low regularity assumptions on its coefficients, as well as an associated Hunt process and semigroup. The Hunt process is known to solve a corresponding stochastic differential equation that is pathwise unique. In this situation, we study the relation of invariance, infinitesimal invariance, recurrence, transience, conservativeness and $L^r$-uniqueness, and present sufficient conditions for non-existence of finite infinitesimally invariant measures as well as finite invariant measures.
Our main result is that recurrence implies uniqueness of infinitesimally invariant measures, as well as existence and uniqueness of invariant measures, both in subclasses of locally finite measures. We can hence make in particular use of various explicit analytic criteria for recurrence that have been previously developed in the context of (generalized) Dirichlet forms and
present diverse examples and counterexamples for uniqueness of infinitesimally invariant, as well as invariant measures and 
an example where $L^1$-uniqueness fails for one infinitesimally invariant measure but holds for another and pathwise uniqueness holds.
Furthermore, we illustrate how our results can be applied to related work and vice versa.\\ \\
\noindent
{Mathematics Subject Classification (2020): primary; 47D03, 47D60, 31C25; secondary: 60J46, 47D07, 35J15.}\\

\noindent 
{Keywords: (infinitesimally) invariant measure, recurrence, conservativeness, $L^r$-uniqueness, $C_0$-semigroup, (generalized) Dirichlet forms.}

\section{Introduction}
Invariant measures have been broadly studied for a long time in the area of dynamical systems theory, semigroup theory and stochastic analysis. In this article, we deal with invariant measures on Euclidean space for a semigroup or a continuous-time Markov process associated with a second order partial differential operator. If under certain assumptions a semigroup or a Markov process admit a finite invariant measure, the asymptotic behavior of the semigroup or the transition semigroup as $t \rightarrow \infty$ can explicitly be characterized in contrast to the case of infinite invariant measures (e.g. \cite[Theorem 3.4]{has}, \cite[Theorem 4.3]{kha12}, \cite[Theorem 4.2.1]{DPZB}, \cite[Theorem 8.1.24]{LB07}).
For this reason, infinite invariant measures do not seem to have as much interest as finite invariant measures. However, we remark that studying infinite invariant measures is also valuable since these are strongly related to $L^1$-uniqueness (\cite[Corollary 2.2]{St99}), uniqueness of the martingale problem (\cite[Section 3.3.2]{LST20}) as well as a characterization for an operator core in an $L^1$-space (Remark \ref{equivconinv(iii)}). Therefore, in this article, invariant measures which are also allowed to be infinite (cf. \cite{Eb}) will be treated. \\
Consider a time-homogeneous It\^{o}-SDE on $\R^d$ with $d \geq 2$ 
\begin{equation} \label{underlyingsde}
X_t =x+\int_0^t \sigma(X_s)dW_s + \int_0^t \mathbf{G}(X_s)ds,  \quad 0 \leq t < \infty,
\end{equation}
where $x \in \R^d$, $\sigma=(\sigma_{ij})_{1 \leq i,j \leq d}$ is a matrix of functions, $\mathbf{G}=(g_1, \ldots, g_d)$ is a vector field on $\R^d$, and the corresponding partial differential operator $(L, C_0^{\infty}(\R^d))$ defined by
\begin{equation} \label{the generator}
Lf = \frac12 \sum_{i,j=1}^d a_{ij} \partial_{ij} f+\sum_{i=1}^d g_i \partial_i f, \qquad f \in C_0^{\infty}(\R^d), \quad A=(a_{ij})_{1 \leq i,j \leq d}=\sigma \sigma^T.
\end{equation}
In \cite[Theorem 3.7]{kha12}, under the assumption that the components of $\sigma$ and $\mathbf{G}$ are locally Lipschitz continuous and that the corresponding unique strong solution $(X^x_t)_{t \geq 0}$ to \eqref{underlyingsde} with respect to the standard Brownian motion $(\Omega, \mathcal{F}, \P, (W_t)_{t \geq 0})$ is regular for at least one point of $\R^d$ (for instance if it is generally non-explosive), the existence of an invariant probability measure $\nu$ on $(\R^d, \mathcal{B}(\R^d))$ for $(X^x_t)_{t \geq 0}$, i.e. 
$$
\int_{\R^d} \P(X^x_t \in A) \nu(dx) = \nu(A), \quad \forall A \in \mathcal{B}(\R^d),
$$
is shown under the assumption that  there exists a positive function $V \in C^2(\R^d)$ such that 
$$
\sup_{\R^d \setminus B_R}LV \longrightarrow -\infty \quad \text{as $R \rightarrow \infty$}. 
$$
The main ingredient to show the result above is the tightness of measures (see \cite[Theorem 3.1]{kha12}), which goes back to Krylov-Bogolyobov's work. The method to derive the result above is also applicable in the case where the components of $A$ and $\mathbf{G}$ are locally H\"{o}lder continuous and $A$ is locally uniformly strictly elliptic (see \cite[Theorem 6.3]{MPW02} and \cite[Theorem 8.1.20]{LB07}). Furthermore, uniqueness of invariant measures in a subclass of probability measures can be shown by using the strong Feller property and the irreducibility of the semigroup (see \cite[Theorem 4.2.1]{DPZB} and \cite[Theorem 8.1.15]{LB07}).\\
In \cite[Theorem 1.5]{St99}, assuming the existence of an infinitesimally invariant measure $\mu$ for $(L, C_0^{\infty}(\R^d))$ (see Definition \ref{definvaran}) with nice density with respect to Lebesgue measure and more general local regularity assumptions on the coefficients than {\bf (H)} of Section \ref{section1}, it is shown that there exists an $L^1(\R^d, \mu)$-closed extension $(\overline{L}^{\mu}, D(\overline{L}^{\mu}))$ of $(L, C_0^{\infty}(\R^d))$ which generates a sub-Markovian $C_0$-semigroup of contractions $(\overline{T}^{\mu}_t)_{t>0}$ on $L^1(\R^d, \mu)$. $(\overline{T}^{\mu}_t)_{t>0}$ further induces a sub-Markovian $C_0$-semigroup of contractions $(T^{\mu}_t)_{t>0}$ on $L^r(\R^d, \mu)$, $r \in [1, \infty)$, and a sub-Markovian semigroup on $L^{\infty}(\R^d, \mu)$, also denoted $(T^{\mu}_t)_{t>0}$ (for the details, see Section \ref{section1}). 
Here, we consider assumption {\bf (H)} of Section \ref{section1} on the coefficients of $L$, which implies the existence of a nice density and that \cite[Theorem 1.5]{St99} is applicable, i.e. the existence of $(\overline{T}^{\mu}_t)_{t>0}$ and $(T^{\mu}_t)_{t>0}$ as above (cf. Remark \ref{infini and H}).\\
The existence of a {\it finite} infinitesimally invariant measure $\mu$ for $(L, C_0^{\infty}(\R^d))$ in case $A$ and $\mathbf{G}$ satisfy {\bf (H)} is shown in \cite[Theorem 1.2]{BR99} under the assumption that there exists a positive function $V \in C^2(\R^d)$ with
\begin{equation} \label{lapynovin}
 LV(x) \longrightarrow -\infty, \; \;\; V(x) \longrightarrow \infty \quad \text{ as $\|x\| \rightarrow \infty$}.
\end{equation}
Moreover, in the situation of \cite[Theorem 1.2]{BR99} stated above, it follows that $\mu$ is an invariant measure for $(T^{\mu}_t)_{t>0}$ (see Definition \ref{definvaran2}). The result \cite[Theorem 1.2]{BR99} is improved in \cite[Theorem 2]{BRS} where condition \eqref{lapynovin} is relaxed to the following condition: there exists a positive function $V\in C^2(\R^d)$ and constants $K>0$, $N_0 \in \N$ such that
\begin{equation}  \label{lapynovinrelax}
LV \leq -K, \quad \text{ on } \R^d \setminus \overline{B}_{N_0}, \;\;\;V(x) \longrightarrow \infty \;\; \text{ as $\|x\| \rightarrow \infty$}.
\end{equation}
By \cite[Corollary 5.4.3 and Proposition 5.3.6]{BKRS}, one can check that the uniqueness of infinitesimally invariant measures for $(L, C_0^{\infty}(\R^d))$ and invariant measures for $(T_t^{\mu})_{t>0}$ in a subclass of probability measures holds under assumption \eqref{lapynovinrelax}. More generally, through Doob's Theorem (\cite[Theorem 4.2.1]{DPZB}), it is derived in \cite[Proposition 3.47]{LST20}, that if there exist constants $K>0$, $N_0 \in \N$ and a positive function $V\in C^2(\R^d \setminus \overline{B}_{N_0}) \cap C(\R^d)$ such that \eqref{lapynovinrelax} holds,
then every (possibly infinite) invariant measure for the Hunt process $\M$ associated to $(T^{\mu}_t)_{t>0}$ (defined as in \cite[Definition 3.43]{LST20}) is finite and $\mu$ is  the unique invariant measure for $\M$ up to constant multiples. However \cite[Theorem 4.2.1]{DPZB} and consequently \cite[Proposition 3.47]{LST20} do not say anything about uniqueness of infinitesimally invariant measures. On the other hand \cite[Proposition 3.47]{LST20} holds under more general conditions than {\bf (H)}. \\
The existence and uniqueness of invariant measures for a strong Markov process in the class of $\sigma$-finite measures is investigated in \cite{has} and \cite{MT}, by assuming that the strong Markov process satisfies diverse technical conditions, where it turns out that besides the technical conditions, recurrence of the strong Markov process is a sufficient condition to obtain existence and uniqueness of invariant measures. In particular, by probabilistic means, the results of \cite{has} apply to the specific case of \cite{Bha}, where the diffusion coefficient is continuous on $\R^d$ and the drift coefficient is locally bounded on $\R^d$. 
However, it may not be easy to check whether (besides recurrence) some of the technical conditions of \cite{has} and \cite{MT} are fulfilled 
in general and in particular in our setting that we are now going to introduce. We consider $L$ as in \eqref{the generator} with coefficients $A=(a_{ij})_{1 \leq i,j \leq d}$ and $\mathbf{G}$ satisfying assumption {\bf (H)} of Section \ref{section1}. In this setting, the following can be checked from \cite{LT18} (see also \cite{LST20} for a complete exposition under more general assumptions, or Section \ref{section1}):
\begin{itemize}
\item there exists an infinitesimally invariant measure $\mu=\rho\,dx$ for $(L, C_0^{\infty}(\R^d))$ (see \eqref {infinvalfir} and Remark \ref{infini and H}). Such a measure is fixed throughout this article and we may also allow that $\mu$ is  explicitly chosen according to assumption {\bf (H$^{\prime}$)} of Section \ref{section1} which implies {\bf (H)}
\item there exists $(T^{\mu}_t)_{t>0}$ as described in the paragraph right before \eqref{lapynovin}
\item $(T^{\mu}_t)_{t>0}$ has a regularized version $(P^{\mu}_t)_{t>0}$, which is the transition semigroup of a Hunt process 
 $\M$, constructed in \cite[Theorem 3.12]{LT18} (see also \cite[Theorem 3.11]{LST20})
\item $(T^{\mu}_t)_{t>0}$ is strictly irreducible and $\M$ is irreducible in the probabilistic sense by \cite[Corollary 4.8]{LT18} (see also \cite[Proposition 2.39]{LST20})
\item we have the dichotomy that $\M$ is either transient or recurrent in the probabilistic sense by \cite[Proposition 4.9]{LT18} (see also \cite[Theorem 3.38(iv)]{LST20})
\item various sufficient conditions for  non-explosion of $\M$ can be found in  \cite[Section 3.2.1]{LST20}. In particular $\M$ is non-explosive, if and only if $(T^{\mu}_t)_{t>0}$ is conservative
\item if $\sigma=(\sigma_{ij})_{1 \leq i,j \leq d}$ is any not necessarily symmetric matrix of functions $\sigma_{ij} \in H^{1,p}_{loc}(\R^d) \cap C(\R^d)$ for some $p\in (d,\infty)$, $d\ge 2$, $1\le i,j\le d$, with $A=(a_{ij})_{1 \leq i,j \leq d}=\sigma \sigma^T$,  and $\M$ is non-explosive, then the Hunt process $\M$ weakly solves the SDE \eqref{underlyingsde} for which pathwise uniqueness and diverse other properties apart from the ones already mentioned are known to hold (cf. \cite[Theorem 3.52]{LST20}).
\end{itemize}
In the above described setting, our main result consists of Theorems \ref{preinvlemma} and \ref{existuniquenessinv} which state that recurrence implies uniqueness of infinitesimally invariant measures (Theorem \ref{preinvlemma}), as well as existence and uniqueness of invariant measures (Theorem \ref{existuniquenessinv}), both in subclasses of locally finite measures. But, we note that the converse does not hold (see Remark \ref{dxunique}). Theorem \ref{preinvlemma} can be used to reconfirm and extend existing literature by using a different method of proof (here we ignore the slight technical differences between assumption {\bf (H)} and \cite[(5.2.1)]{BKRS}, cf. \cite[Remark  2.40]{LST20}). For instance, if one searches for uniqueness of infinitesimally invariant measures in a subclass of probability measures, it is known from \cite[Theorems 5.3.1, 5.4.1]{BKRS} that a sufficient condition under assumption {\bf (H)} is the existence of an infinitesimally invariant measure that is a probability invariant measure. A combination of Proposition \ref{fininvariant} and Theorem \ref{preinvlemma} immediately gives that result with the uniqueness of infinitesimally invariant measures being now even in a subclass of locally finite measures. Here, one has to note that in the situation, where infinitesimally invariant measures $\mu$ are a priori assumed to be finite as in \cite{BKRS}, recurrence of $(T^{\mu}_t)_{t>0}$ is 
equivalent to invariance of $\mu$ with respect to $(T^{\mu}_t)_{t>0}$ by Proposition \ref{fininvariant} and Corollary \ref{existinvariancemea}. In this regard, by \cite[Theorem 5.3.1]{BKRS}, Theorem \ref{preinvlemma} provides a full generalization of \cite[Theorem 5.4.1]{BKRS} from a subclass of probability measures to a subclass of locally finite measures. Moreover, recurrence implies $L^1$-uniqueness with respect to a unique (possibly infinite) infinitesimally invariant and invariant measure (by our main result, Corollary 3.8 and \cite[Corollary 2.2]{St99}). 
A further result is the characterization of absence of finite invariant measures (Corollary \ref{carinva}) and a sufficient condition for absence of finite infinitesimally invariant meaures (Proposition \ref{prop1.22}(ii)). Concrete examples for the latter appear  in Examples \ref{exampleinva}, \ref{exampleinva2}(ii), \ref{exam:3.1} and \ref{dualnotconser}. \\
To find an invariant measure, we focus on showing conservativeness for the dual semigroup $(T'^{,\mu}_t)_{t>0}$ of $(T^{\mu}_t)_{t>0}$ which is equivalent to the fact that $\mu$ is an invariant measure for $(T^{\mu}_t)_{t>0}$ by Lemma \ref{equicondi32}(i). Indeed, since recurrence of $(T^{\mu}_t)_{t>0}$ (see Definition \ref{defrectra}) implies the recurrence of the dual semigroup of $(T^{\mu}_t)_{t>0}$ (see \cite[Proposition 2.5]{BCR} and Theorem \ref{cosemirec} for an alternative proof), as well as recurrence implies conservativeness (Corollary \ref{existinvariancemea}), it follows that recurrence of $(T^{\mu}_t)_{t>0}$ is a sufficient condition for $\mu$ to be an invariant measure for $(T^{\mu}_t)_{t>0}$ by Lemma \ref{equicondi32}(i). We show the uniqueness of invariant measures through the uniqueness of infinitesimally invariant measures (see Theorem \ref{existuniquenessinv} and its proof). We use a property of excessive functions (\cite[Proposition 11(b)]{GT2}) for the Hunt process $\M$ constructed in \cite[Theorem 3.12]{LT18}, so that recurrence of $(T^{\mu}_t)_{t>0}$  turns out to be not only a sufficient criterion to obtain existence of an invariant measure but also to obtain uniqueness of (infinitesimally) invariant measures (see proof of Theorem \ref{preinvlemma}). This is mainly done in Section \ref{invrec}, whereas existence of an infinitesimally invariant measure follows from assumption {\bf (H)} (or {\bf (H$^{\prime}$)}).\\
The advantage of our main result is that we can use various previously developed explicit criteria for recurrence in the frame of (generalized) Dirichlet forms (\cite{GT2}, \cite{LT18}, \cite{LST20}, \cite{Sturm98}, \cite{Stu1}, \cite{Stu2}) to obtain existence and uniqueness of (infinitesimally) invariant measures. These criteria depend only on analytic quantities (e.g. the coefficients of $L$ or volume growth, etc.) and not on the stochastic process. 
For instance, by Remark  \ref{explicitcritinv}(i) (which states a consequence of an explicit recurrence result from \cite{LST20} and our main result), if there exist $N_0 \in \N$ and a positive function $V\in C^2(\R^d \setminus \overline{B}_{N_0}) \cap C(\R^d)$ such that
\begin{equation*}  
\label{lapynovrecinv}
LV \leq 0 \quad \text{ on } \R^d \setminus \overline{B}_{N_0}, \;\;\;  V(x) \longrightarrow \infty \;\; \text{ as $\|x\| \rightarrow \infty$},
\end{equation*}
then $(T^{\mu}_t)_{t>0}$ is recurrent and $\mu$ is the unique (possibly infinite) infinitesimally invariant measure  for $(L, C_0^{\infty}(\R^d))$ and the unique invariant measure for $(T^{\mu}_t)_{t>0}$ (both up to constant multiples). Or in the case $d=2$,  an arbitrary growth condition on  the diffusion coefficient can be allowed to get recurrence of $(T^{\mu}_t)_{t>0}$ and hence uniqueness of infinitesimally invariant measures for $(L, C_0^{\infty}(\R^d))$ and uniqueness of invariant measures for $(T^{\mu}_t)_{t>0}$, if the difference between the maximum and minimum eigenvalue of functions is sufficiently small and the drift $\mathbf{G}$ is suitable 
(see \eqref{eq:3.2.1.20} in Remark \ref{explicitcritinv}(ii) which is again a consequence of results from \cite{LST20} and our main result). 
Further explicit conditions, including volume growth and integrability conditions, for existence and uniqueness of a prescribed (cf. \cite[2.2.3 Discussion]{LST20}) finite or infinite $\mu$ under condition {\bf (H$^{\prime}$)} are derived in Remark  \ref{explicitcritinv}(iii)(a)-(c). \\
Throughout this article, we display various examples and counterexamples which existing literature may not be able to handle. In particular, in Section \ref{section5exam}, we present examples where there are two distinct infinitesimally invariant measures for $(L, C_0^{\infty}(\R^d))$ which are both infinite and also invariant measures (Example \ref{exam:3.1}). 
In this case, we cannot have recurrence and can obtain an explicit example of a bounded co-excessive function that is not constant. We also present an example where there are two distinct infinitesimally invariant measures for $(L, C_0^{\infty}(\R^d))$ which are both finite (Example \ref{Exam:3.3}), or one is finite and the other is infinite (Example \ref{exam:3.4}). In those cases, the infinitesimally invariant measures are not invariant measures. In Example \ref{dualnotconser}, we present a case where $\mu$ and another measure $\widetilde{\mu}$ are two distinct infinitesimally invariant measures for $(L, C_0^{\infty}(\R^d))$ and $(L, C_0^{\infty}(\R^d))$ is not $L^1(\R^d, \mu)$-unique but $L^1(\R^d, \widetilde{\mu})$-unique, and in Example \ref{appstaeple} we show that there exists a closed extension of $(L, C_0^{\infty}(\R^d))$ on $L^1(\R^d, \mu)$ whose $C_0$-semigroup is not sub-Markovian. Finally, we show that the transition semigroup $(P^{\mu}_t)_{t>0}$ of $\M$ is actually the same as the semigroup $(T(t))_{t>0}$ constructed in \cite{LB07} or  \cite{MPW02} under the 
\lq\lq intersection\rq\rq\ of conditions from \cite{LB07} or  \cite{MPW02} and condition {\bf(H)} here. Hence $(T(t))_{t>0}$ inherits various properties derived in this article and \cite{LST20} for $(T^{\mu}_t)_{t>0}$ and vice versa.\\
Although the existing literature \cite{has}, \cite{MT}, \cite{DPZB} can cover a general state space, possibly infinite dimensional, 
there one can find no results about uniqueness of infinitesimally invariant measures as well as non-existence of finite (infinitesimally) invariant measures, since the strong relation between infinitesimally invariant measures and invariant measures is not explicitly identified in that literature. The results established in \cite{St99}, \cite{BKR2}, \cite{GT2}, \cite{LT18}, \cite{LST20} make it possible for us to study (infinitesimally) invariant measures through the analysis of the second order partial differential operator $L$ and through stochastic calculus for the Hunt process associated to $(T^{\mu}_t)_{t>0}$. Since we deal with finite and infinite invariant measures, we can establish the connection to $L^1$-uniqueness and to the existence of operator cores, and since we present various examples and counterexamples for existence and uniqueness of (infinitesimally) invariant measures beyond the case of recurrence, our results can be distinguished from the existing literature. 
We expect that the methods of this article can also be applied in more general or different contexts, for instance to the case of $L$ with more relaxed conditions than those in {\bf (H)} (see \cite{LT19}, \cite{LST20}), the degenerate case (\cite{LT19de}) and the reflection case (\cite{ShTr13a}).

\section{Framework}\label{section1}
The basic notations and conventions that we are going to use and define are those as given in \cite{LT19} up to the following slight modification. In contrast to \cite{LT19} we will use \lq\lq $\mu$\rq\rq\ as a superscript to describe the dependence on the infinitesimally invariant measure for several objects. Thus the notation used here, will only differ in the superscript \lq\lq$\mu$\rq\rq\ used to define some mathematical objects. \\
Unless otherwise stated, we will throughout this article assume the following basic condition on the coefficients $A=(a_{ij})_{1\le i,j\le d}$ and $\mathbf{G} =(g_1, \dots, g_d)$: \\ \\
{\it {\bf (H):} for fixed $p\in (d, \infty)$, $d \geq 2$, $\mathbf{G} = (g_1, \dots, g_d) \in L_{loc}^p(\R^d, \R^d)$,  $A=(a_{ij})_{1\le i,j\le d}$ with $a_{ji}=a_{ij}\in H^{1,p}_{loc}(\R^d) \cap C(\R^d)$ for all $1 \leq i,j \leq d$ is a symmetric matrix of functions
that is locally uniformly strictly elliptic, i.e. for every open ball $B \subset \R^d$ there exist real numbers $\lambda_B, \Lambda_B>0$, such that 
\begin{eqnarray}\label{use}
\lambda_B \left \| \xi \right \|^2 \leq \big \langle A(x) \xi, \xi \big \rangle \ \leq \Lambda_B \left \| \xi \right \|^2  \text{ for all }\; \xi \in \R^d, \; x\in B.
\end{eqnarray}
} 
\centerline{}
\bigskip
For $A$ and $\mathbf{G}$ as in {\bf (H)}, consider the partial differential operator $(L, C_0^{\infty}(\R^d))$
$$
Lf := \frac12 \text{trace}(A \nabla^2 f) + \langle \mathbf{G}, \nabla f\rangle = \frac12 \sum_{i,j=1}^d a_{ij} \partial_{ij} f +\sum_{i=1}^d g_i \partial_i f, \quad f \in C_0^{\infty}(\R^d).
$$
Of course, $L$ can also be applied to $f \in C^2(U)$ where $U \subset \R^d$ is an arbitrary open set with the same expression as above. By \cite[Theorem 1]{BRS}, there exists $\rho \in H^{1,p}_{loc}(\R^d) \cap C(\R^d)$ satisfying $\rho(x)>0$ for all $x \in \R^d$ such that with 
$$
\mu:=\rho dx
$$
it holds
\begin{equation} \label{infinvalfir}
\int_{\R^d} Lf\, d\mu =0, \qquad \forall f \in C_0^{\infty}(\R^d),
\end{equation}
i.e. $\mu$ is an infinitesimally invariant measure for $(L, C_0^{\infty}(\R^d))$ as in the following definition:
\begin{defn} \label{definvaran}
A positive, locally finite measure $\widehat{\mu}$ defined on $(\R^d, \mathcal{B}(\R^d))$ satisfying $Lf \in L^1(\R^d, \widehat{\mu})$ for all $f \in C_0^{\infty}(\R^d)$
is said to be an infinitesimally invariant measure for $(L,C_0^{\infty}(\R^d))$, if
$$
\int_{\R^d} Lf d\widehat{\mu} = 0, \qquad \forall f \in C_0^{\infty}(\R^d).
$$
\end{defn}
We set
$$
\beta^{\rho, A}:=\frac{1}{2} \nabla A + \frac{1}{2\rho}A \nabla \rho,\quad \text{ where }\ 
\nabla A:=\big (\sum_{j=1}^d\partial_j a_{1j},..., \sum_{j=1}^d\partial_j a_{dj}\big ).
$$
Let $(\mathcal{E}^{0, \mu}, D(\mathcal{E}^{0, \mu}))$ be the symmetric Dirichlet form given as the closure of
\begin{equation}\label{symDFdef}
\mathcal{E}^{0, \mu}(f,g):=  \frac12 \int_{\R^d} \langle A \nabla f, \nabla g \rangle d\mu, \quad f,g \in C_0^{\infty}(\R^d)
\end{equation}
on $L^2(\R^d, \mu)$ and let $\mathcal{E}^{0, \mu}_{\alpha}(f,g) := \mathcal{E}^{0, \mu}(f,g)+\alpha\int_{\R^d} fg d\mu$, $\alpha>0$. Denote by $(L^{0, \mu}, D(L^{0, \mu}))$  the associated generator, by $(T^{0, \mu}_t)_{t>0}$ the associated sub-Markovian $C_0$-semigroup of contractions on $L^2(\R^d, \mu)$, and by $(G^{0, \mu}_{\alpha})_{\alpha>0}$ the associated sub-Markovian $C_0$-resolvent of contractions on $L^2(\R^d, \mu)$ (i.e. $G^{0, \mu}_{\alpha}=(\alpha-L^{0, \mu})^{-1}$)  (see \cite[Chapter 1]{FOT} or \cite[Chapter I]{MR}). One easily checks $C_0^{\infty}(\R^d) \subset D(L^{0, \mu})$, 
$$
L^{0, \mu} f= \frac12 \text{trace}(A \nabla^2 f)+\langle \beta^{\rho, A}, \nabla f  \rangle, \quad \forall f \in C_0^{\infty}(\R^d)
$$
and
\begin{equation} \label{infinvsyme}
\int_{\R^d} L^{0, \mu} f d\mu =0, \quad \; \forall f \in C_0^{\infty}(\R^d).
\end{equation}
In particular, 
$$
Lf = L^{0, \mu} f +\langle \mathbf{G}-\beta^{\rho, A}, \nabla f \rangle, \quad f \in C_0^{\infty}(\R^d).
$$
By \cite[Theorem 1.5]{St99}, there exists a closed extension $(\overline{L}^{\mu}, D(\overline{L}^{\mu}))$ of $(L, C_0^{\infty}(\R^d))$ (more precisely, a closed extension of $(L, D(L^{0, \mu})_{0,b})$ on $L^1(\R^d, \mu)$) which generates a sub-Markovian $C_0$-semigroup of contractions $(\overline{T}^{\mu}_t)_{t>0}$ on $L^1(\R^d, \mu)$. Let $(\overline{G}^{\mu}_{\alpha})_{\alpha>0}$ be the sub-Markovian $C_0$-resolvent of contractions on $L^1(\R^d, \mu)$ associated with $(\overline{T}^{\mu}_t)_{t>0}$, i.e. $\overline{G}^{\mu}_{\alpha}:=(\alpha-\overline{L}^{\mu})^{-1}$. Define the partial differential operator $(L'^{, \mu}, C_0^{\infty}(\R^d))$ by
$$
L'^{, \mu} f := \left(\frac{1}{2} \text{trace}(A \nabla^2 f)+\langle \beta^{\rho, A}, \nabla f \rangle \right)-\langle \mathbf{G}- \beta^{\rho, A}, \nabla f   \rangle, \quad f \in C_0^{\infty}(\R^d),
$$
Note that by \eqref{infinvalfir} and \eqref{infinvsyme}, it follows
$$
\int_{\R^d} L'^{, \mu}f\, d\mu =0, \qquad \forall f \in C_0^{\infty}(\R^d),
$$
hence using \cite[Theorem 1.5]{St99} again, there exists a closed extension $(\overline{L}'^{, \mu}, D(\overline{L}'^{, \mu}))$ of $(L'^{, \mu}, C_0^{\infty}(\R^d))$ on $L^1(\R^d, \mu)$ which generates a sub-Markovian $C_0$-semigroup of contractions $(\overline{T}'^{, \mu}_t)_{t>0}$. Let $(\overline{G}'^{, \mu}_{\alpha})_{\alpha>0}$ be the sub-Markovian $C_0$-resolvent associated with $(\overline{T}'^{, \mu}_t)_{t>0}$, i.e. $\overline{G}'^{, \mu}_{\alpha}:=(\alpha-\overline{L}'^{, \mu})^{-1}$. Consider  the restrictions of $(\overline{G}^{\mu}_{\alpha})_{\alpha>0}$, $(\overline{G}'^{, \mu}_{\alpha})_{\alpha>0}$ and $(\overline{T}^{\mu}_t)_{t>0}$, $(\overline{T}'^{, \mu}_t)_{t>0}$ on $L^1(\R^d, \mu)_b$. By Riesz-Thorin interpolation, these can be uniquely extended to sub-Markovian $C_0$-resolvents and to sub-Markovian $C_0$-semigroups of contractions on each $L^r(\R^d, \mu)$, $r\in [1,\infty)$, and to sub-Markovian resolvents and semigroups of contractions on $L^{\infty}(\R^d,\mu)$. We denote these by $(G^{\mu}_{\alpha})_{\alpha>0}$, $(G'^{, \mu}_{\alpha})_{\alpha>0}$ and $(T^{\mu}_t)_{t>0}$, $(T'^{, \mu}_t)_{t>0}$, respectively, independently of the $L^{r}(\R^d, \mu)$-space, $r \in [1, \infty]$ on which they are acting (cf. \cite{GT2}) and we will use these notations from now on exclusively. Thus the notations $(\overline{G}^{\mu}_{\alpha})_{\alpha>0}$, $(\overline{G}'^{, \mu}_{\alpha})_{\alpha>0}$ and $(\overline{T}^{\mu}_t)_{t>0}$, $(\overline{T}'^{, \mu}_t)_{t>0}$ for the resolvents and semigroups acting on $L^1(\R^d, \mu)$ 
will no longer be used and we assume that (unless otherwise stated) the space or spaces on which $(G^{\mu}_{\alpha})_{\alpha>0}$, $(G'^{, \mu}_{\alpha})_{\alpha>0}$ and $(T^{\mu}_t)_{t>0}$, $(T'^{, \mu}_t)_{t>0}$ are acting will be clear from the context.
Using \cite[Remark 1.7 (ii)]{St99}, $(G'^{, \mu}_{\alpha})_{\alpha>0}$ and $(T'^{, \mu}_t)_{t>0}$ are the adjoints of 
$(G^{\mu})_{\alpha>0}$ and $(T^{\mu}_t)_{t>0}$ on $L^2(\R^d, \mu)$, respectively. \medskip \\
We obtained the existence of a density $\rho$ satisfying \eqref{infinvalfir} by \cite[Theorem 1]{BRS} under the assumption of {\bf (H)}. On the other hand, we can just start from a chosen $\rho \in H^{1,p}_{loc}(\R^d) \cap C(\R^d)$ such that $\rho(x)>0$ for all $x \in \R^d$ and such that \eqref{infinvalfir}  holds. Then $(\mathcal{E}^{0, \mu}, D(\mathcal{E}^{0, \mu}))$, $(L^{0, \mu}, D(L^{0, \mu}))$, $(T^{0,\mu}_t)_{t>0}$, $(\overline{L}^{\mu}, D(\overline{L}^{\mu}))$, $(\overline{G}^{\mu}_{\alpha})_{\alpha>0}$, $(\overline{T}^{\mu}_t)_{t>0}$, $(G^{\mu}_{\alpha})_{\alpha>0}$, $(T^{\mu}_{t})_{t>0}$, $(L'^{, \mu}, C_0^{\infty}(\R^d))$, $(\overline{L}'^{, \mu}, D(\overline{L}'^{, \mu}))$, $(\overline{G}'^{, \mu}_{\alpha})_{\alpha>0}$, $(\overline{T}'^{,\mu}_t)_{t>0}$, $(G'^{, \mu}_{\alpha})_{\alpha>0}$, $(T'^{,\mu}_t)_{t>0}$ are defined as in the above way. \\
In that spirit, we can consider the following condition that enables us to deal with a partial differential operator $(L, C_0^{\infty}(\R^d))$ sayisfying {\bf (H)}, where an explicitly choosen measure $\mu$ is an infinitesimally invariant measure for $(L, C_0^{\infty}(\R^d))$ (cf. \cite[Remark 4.5]{LT18}). \\
\text{}\\
{\it
{\bf (H$^{\prime}$)}: for some $p\in (d,\infty)$, $d\ge 2$, $\rho \in H^{1,p}_{loc}(\R^d) \cap C(\R^d)$ with $\rho(x)>0$ for all $x \in \R^d$ is given and $\mu:=\rho dx$. $A=(a_{ij})_{1 \leq i,j \leq d}$ is as in {\bf (H)}.
$\mathbf{G}:=\beta^{\rho, A} + \mathbf{B}$, where $\mathbf{B}\in L^{p}_{loc}(\R^d, \R^d)$ is any vector field satisfying}
$$
\int_{\R^d} \langle\mathbf{B}, \nabla f \rangle d\mu = 0, \quad \;\forall f \in C_0^{\infty}(\R^d).
$$
Under assumption {\bf (H$^{\prime}$)}, it follows that 
$$
Lf = \frac{1}{2}\mathrm{trace}(A \nabla^2 f) +\langle \beta^{\rho, A}+\mathbf{B}, \nabla f \rangle, \quad f \in C_0^{\infty}(\R^d),
$$
satisfies \eqref{infinvalfir} and that {\bf (H)} holds with $A$ and $\mathbf{G}$ defined as in  {\bf (H$^{\prime}$)}. Thus {\bf (H$^{\prime}$)} implies {\bf (H)}, and under either assumption {\bf (H)} or {\bf (H$^{\prime}$)} we have  $\mu=\rho dx$ with $\rho \in H^{1,p}_{loc}(\R^d) \cap C(\R^d)$, and $\rho(x)>0$ for all $x \in \R^d$, but in contrast to assumption 
{\bf (H)} we do not need an existence result (\cite[Theorem 1]{BRS}) to obtain the density $\rho$ under assumption {\bf (H$^{\prime}$)}, because then it is automatically given.
\begin{rem}\label{infini and H}
{\rm The existence of an infinitesimally invariant measure $\mu$ for $(L, C_0^{\infty}(\R^d))$ with nice density $\rho$ as in  \eqref{infinvalfir} follows more generally under condition {\bf (a)} of \cite[Theorem 3.6]{LT19} (see also \cite[Theorem 2.24]{LST20}) which is weaker than condition {\bf (H)} (see \cite[Remark 2.23]{LST20}), but is still covered by the conditions needed for \cite[Theorem 1.5]{St99} which imply the existence of $(\overline{T}^{\mu}_t)_{t>0}$ and $(T^{\mu}_t)_{t>0}$. In the following however, we exclusively concentrate on condition {\bf (H)} (or {\bf (H$^{\prime}$)} which implies {\bf (H)}) because of Remark \ref{equivconinv}(i) below, for which a similar statement on basis of condition {\bf (a)} of \cite[Theorem 3.6]{LT19} is yet not developed.}
\end{rem}

\section{Existence and uniqueness of (infinitesimally) invariant measures in case of recurrence} \label{invrec}
Throughout this section, we assume that $(L, C_0^{\infty}(\R^d))$ satisfies {\bf (H)} of Section \ref{section1} and consider the infinitesimally invariant measure $\mu=\rho dx$ for $(L, C_0^{\infty}(\R^d))$ as specified in Section \ref{section1}.

\begin{defn} \label{definvaran2}
A positive, locally finite measure $\widetilde{\mu}$ defined on $(\R^d, \mathcal{B}(\R^d))$ with $\widetilde{\mu} \ll  \mu$, and $Lf \in L^1(\R^d, \widetilde{\mu})$ for all $f \in C_0^{\infty}(\R^d)$, is called an invariant measure for $(T^{\mu}_t)_{t>0}$ (acting on $L^{\infty}(\R^d, \mu)$), if
$$
\int_{\R^d} T^{\mu}_t 1_A d \widetilde{\mu}  = \widetilde{\mu}(A), \qquad \forall A \in \mathcal{B}(\R^d),\; t>0.
$$
If in the above equation \lq\lq $=$\rq\rq\  is replaced by \lq\lq $\leq$\rq\rq, then $\widetilde{\mu}$ is called a sub-invariant measure for $(T^{\mu}_t)_{t>0}$. $(T^{\mu}_t)_{t>0}$ is called conservative, if
$$
T^{\mu}_t 1_{\mathbb{R}^d} = 1 \; \text{ $\mu$-a.e.\;\; for one (and hence all) $t>0$. }
$$
$(T^{\mu}_t)_{t>0}$ is called non-conservative, if $(T^{\mu}_t)_{t>0}$ is not conservative.
\end{defn}
It will turn out in Lemma \ref{extc0semi}, that if $\widetilde{\mu}$ is an invariant measure for $(T^{\mu}_t)_{t>0}$, then $\widetilde{\mu}$ is an infinitesimally invariant measure for $(L, C_0^{\infty}(\R^d))$. Moreover, an infinitesimally invariant measure is in general only sub-invariant (see \cite[Remark 3.44]{LST20}). It follows in particular from the following Remark \ref{equivconinv}(i) that infinitesimally invariant measures in our setting are always equivalent to Lebesgue measure and have nice densities.  
\begin{rem} \label{equivconinv} \rm
\begin{itemize}
\item[(i)]
If $\widehat{\mu}$ is an infinitesimally invariant measure for $(L, C_0^{\infty}(\R^d))$, then by \cite[Corollary 2.10, 2.11]{BKR2}, there exists $\widehat{\rho} \in H^{1,p}_{loc}(\R^d) \cap C(\R^d)$ such that $\widehat{\rho}(x)>0$ for all $x \in \R^d$ and $\widehat{\mu}= \widehat{\rho} dx$. 
\item[(ii)]
By approximation, one can easily check that
$\mu$ is $(T^{\mu}_t)_{t>0}$-invariant. i.e.
$$
 \int_{\R^d} T^{\mu}_t f d\mu = \int_{\R^d}f d\mu, \qquad \forall f \in L^1(\R^d, \mu), \;t>0,
$$
if and only if
$\mu$ is an invariant measure for $(T^{\mu}_t)_{t>0}$.
\end{itemize}
\end{rem}
The following lemma can be shown exactly as in \cite[Remark 2.13]{LST20}.
\begin{lem} \label{equicondi32}
\begin{itemize}
\item[(i)]
$\mu$ is an invariant measure for $(T^{\mu}_t)_{t>0}$, if and only if $(T'^{, \mu}_t)_{t>0}$ is conservative.
\item[(ii)]
$\mu$ is an invariant measure for $(T'^{, \mu}_t)_{t>0}$, if and only if $(T^{\mu}_t)_{t>0}$ is conservative.
\item[(iii)]
Assume $\mu$ is finite. Then $\mu$ is an invariant measure for $(T^{\mu}_t)_{t>0}$, if and only if $(T^{\mu}_t)_{t>0}$ is conservative.
\item[(iv)]
Assume $\mu$ is finite. Then $\mu$ is an invariant measure for $(T'^{, \mu}_t)_{t>0}$, if and only if $(T'^{, \mu}_t)_{t>0}$ is conservative.
\end{itemize}
\end{lem}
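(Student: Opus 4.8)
The plan is to reduce all four equivalences to a single duality identity, together with Remark~\ref{equivconinv}(ii) and its evident analogue for $(T'^{,\mu}_t)_{t>0}$. The latter analogue is legitimate because $\mu$ is an infinitesimally invariant measure for $(L'^{,\mu}, C_0^{\infty}(\R^d))$ by the computation preceding Definition~\ref{definvaran2}, and $(T'^{,\mu}_t)_{t>0}$ is built from $L'^{,\mu}$ in exactly the same way as $(T^{\mu}_t)_{t>0}$ is from $L$; thus, just as in Remark~\ref{equivconinv}(ii), ``$\mu$ is an invariant measure for $(T^{\mu}_t)_{t>0}$'' can be replaced by $\int_{\R^d} T^{\mu}_t f\,d\mu = \int_{\R^d} f\,d\mu$ for all $f\in L^1(\R^d,\mu)$, and likewise with primes.

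The technical heart is to upgrade the $L^2(\R^d,\mu)$-adjointness of $(T^{\mu}_t)_{t>0}$ and $(T'^{,\mu}_t)_{t>0}$ to
$$
\int_{\R^d} (T^{\mu}_t f)\,g\,d\mu \;=\; \int_{\R^d} f\,(T'^{,\mu}_t g)\,d\mu ,
$$
valid for $f\in L^1(\R^d,\mu)$ and $g\in L^{\infty}(\R^d,\mu)$, in particular for $g=1$ (and its mirror image with the two semigroups exchanged). I would first pass from the $L^2$-identity to $f\in L^1(\R^d,\mu)$, $g\in L^1\cap L^{\infty}(\R^d,\mu)$ by density of $L^1\cap L^2$ in $L^1$ and $L^1$-contractivity of $T^{\mu}_t$; then I would take $g=1$ as the increasing limit of the indicators $1_{B_n}$ of balls (which lie in $L^1\cap L^{\infty}$ since $\mu$ is locally finite) and use sub-Markovianity of both semigroups together with monotone convergence on each side, first for $f\ge 0$ and then by splitting $f=f^+-f^-$. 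This is the step requiring the most care: when $\mu$ is infinite, $1\notin L^2(\R^d,\mu)$, so the identity with $g=1$ is genuinely a limiting statement rather than an instance of $L^2$-duality, and one must lean on positivity of the semigroups to justify the interchange of limit and integral.

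Given the displayed identity the four parts are short. For (i): by (the analogue of) Remark~\ref{equivconinv}(ii), $\mu$ is invariant for $(T^{\mu}_t)_{t>0}$ iff $\int_{\R^d} T^{\mu}_t f\,d\mu=\int_{\R^d} f\,d\mu$ for all $f\in L^1(\R^d,\mu)$, which by the identity equals $\int_{\R^d} f\,(T'^{,\mu}_t 1)\,d\mu$; so this holds iff $\int_{\R^d} f\,(T'^{,\mu}_t 1-1)\,d\mu=0$ for all such $f$, i.e.\ (testing against $f=\mathrm{sgn}(T'^{,\mu}_t 1-1)\,1_{B_n}$ and letting $n\to\infty$) iff $T'^{,\mu}_t 1=1$ $\mu$-a.e., which is conservativeness of $(T'^{,\mu}_t)_{t>0}$. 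Part~(ii) follows by interchanging the roles of the two semigroups throughout. For (iii) and (iv), finiteness of $\mu$ gives $1\in L^1\cap L^2(\R^d,\mu)$: choosing $f=1$ in the integral characterization of invariance and using $0\le T^{\mu}_t 1\le 1$ with $\int_{\R^d}(1-T^{\mu}_t 1)\,d\mu=0$ forces $T^{\mu}_t 1=1$ $\mu$-a.e.; conversely, conservativeness of $(T^{\mu}_t)_{t>0}$ fed into the duality identity with $f=1$ gives $\int_{\R^d} T'^{,\mu}_t g\,d\mu=\int_{\R^d} g\,d\mu$ for all $g\in L^1(\R^d,\mu)$, hence (taking $g=1$ again and using $0\le T'^{,\mu}_t 1\le 1$) conservativeness of $(T'^{,\mu}_t)_{t>0}$, whence part~(i) returns the invariance of $\mu$ for $(T^{\mu}_t)_{t>0}$. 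Thus for finite $\mu$ the two semigroups are conservative simultaneously, and (iii), (iv) drop out of (i), (ii).
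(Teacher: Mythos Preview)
Your proposal is correct and follows essentially the same route as the paper. The paper does not spell out a proof here but refers to \cite[Remark 2.13]{LST20}; the argument there (and the one suppressed in the paper's source) proceeds, like yours, by pairing $T^{\mu}_t f$ against an increasing sequence $\chi_n\nearrow 1$ with $\chi_n\in L^1(\R^d,\mu)_b$, passing through the $L^2$-duality of $(T^{\mu}_t)_{t>0}$ and $(T'^{,\mu}_t)_{t>0}$, and using monotone convergence and sub-Markovianity to reach $g=1$ --- exactly your ``technical heart'' with $1_{B_n}$ in place of $\chi_n$. Your treatment of (iii) and (iv), reducing the converse direction back to (i) via the chain $(T^{\mu}_t)$ conservative $\Rightarrow$ $\mu$ invariant for $(T'^{,\mu}_t)$ $\Rightarrow$ $(T'^{,\mu}_t)$ conservative $\Rightarrow$ $\mu$ invariant for $(T^{\mu}_t)$, is likewise the standard one. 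The only cosmetic point: when you write ``for all $g\in L^1(\R^d,\mu)$'' in (iii), you are tacitly using the mirror version of your duality identity (with $f\in L^{\infty}$, $g\in L^1$), which you did announce; it may be worth making that explicit.
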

\begin{rem} \label{equivconinv(iii)} \rm
For $r \in [1,p]$, let $(L_r^{\mu}, D(L_r^{\mu}))$ denote the generator associated with $(T^{\mu}_t)_{t>0}$ on $L^r(\R^d, \mu)$. If $f \in C_0^{\infty}(\R^d)$, then $f \in L^r(\R^d, \mu)$ and $\overline{L}^{\mu} f= Lf \in L^r(\R^d, \mu)$, hence by \cite[Lemma 1.11]{Eb}, it holds $f \in D(L_r^{\mu})$ and $L_r^{\mu} f = \overline{L}^{\mu}f =Lf$. Therefore, $(L_r^{\mu}, D(L_r^{\mu}))$ is a closed extension of  $(L, C_0^{\infty}(\R^d))$, which generates a sub-Markovian $C_0$-semigroup $(T^{\mu}_t)_{t>0}$ on $L^r(\R^d, \mu)$.
Thus by \cite[Theorem 1.2]{Eb}, the following statements {\it (a)} and {\it (b)} are equivalent:
\begin{itemize} \it
\item[(a)]
$C_0^{\infty}(\R^d)$ is an operator core for  $(L_{r}^{\mu}, D(L_{r}^{\mu}))$, i.e. $C_0^{\infty}(\R^d)$ is dense in $D(L_{r}^{\mu})$ with respect to the graph norm
$$
\|f\|_{D(L_{r}^{\mu})}  := \|f\|_{L^r(\R^d, \mu)}+ \|L^{\mu} f \|_{L^1(\R^d, \mu)}, \; \quad f \in D(L_r^{\mu})
$$
In other words,
$(L_{r}^{\mu}, D(L_{r}^{\mu}))$ is the closure of $(L, C_0^{\infty}(\R^d))$ on $L^r(\R^d, \mu)$.
\item[(b)]
$(L, C_0^{\infty}(\R^d))$ is $L^r(\R^d, \mu)$-unique, i.e. $(T^{\mu}_t)_{t>0}$ is the only $C_0$-semigroup on $L^r(\R^d, \mu)$, whose generator extends $(L, C_0^{\infty}(\R^d))$.
\end{itemize}
Since $G_{\alpha}^{\mu} = (\alpha-L^{\mu}_r)^{-1}$ for all $\alpha>0$, {\it (a)} is equivalent to 
\begin{itemize}\it
\item[(c)] $(\alpha-L)(C^{\infty}_0(\R^d)) \subset L^r(\R^d, \mu)$ dense with respect to $\|\cdot \|_{L^r(\R^d, \mu)}$ for any $\alpha>0$.
\end{itemize}
By the Hahn-Banach theorem (\cite[Proposition 1.9]{BRE}) and the Riesz representation theorem (\cite[Theorem 4.11, 4.14]{BRE}), {\it(c)} is equivalent to
\begin{itemize} \it
\item[(d)]
For $\alpha>0$ and $h \in L^{q}(\R^d, \mu)$ with $q \in (1, \infty]$ and $\frac{1}{r}+\frac{1}{q}=1$,
$$
\int_{\R^d} (\alpha-L) u \cdot h d\mu =0, \quad \forall u \in C_0^{\infty}(\R^d)
$$
implies that $h=0$ $\mu$-a.e.
\end{itemize}
In particular, by \cite[Corollary 2.2]{St99} and Remark \ref{equivconinv}(ii), $\mu$ is an invariant measure for $(T^{\mu}_t)_{t>0}$, if and only if one of the above equivalent conditions {\it (a)-(d)} hold with $r=1$.
\end{rem}
\text{}\\
Let $G^{\mu}$, $G'^{, \mu}$ be the potential operators of $(T^{\mu}_t)_{t>0}$ and $(T'^{, \mu}_t)_{t>0}$ respectively, defined for $f \in L^1(\R^d, \mu)$, $f  \geq 0$ $\mu$-a.e., through monotone convergence by
$$
G^{\mu}f:= \lim_{N \rightarrow \infty} \int_0^N T^{\mu}_t f \, d\mu = \lim_{\alpha \rightarrow 0+ } \int_{0}^{\infty} e^{-\alpha t} T^{\mu}_t f \, d\mu =  \lim_{\alpha \rightarrow 0+} G^{\mu}_{\alpha} f, \quad \mu\text{-a.e.}
$$
$$
G'^{, \mu}f:= \lim_{N \rightarrow \infty} \int_0^N T'^{, \mu}_t f \, d\mu = \lim_{\alpha \rightarrow 0+ } \int_{0}^{\infty} e^{-\alpha t} T'^{, \mu}_t f \, d\mu=  \lim_{\alpha \rightarrow 0+} G'^{, \mu}_{\alpha} f, \quad \mu\text{-a.e.}
$$

\begin{defn} \label{defrectra}
$(T^{\mu}_t)_{t>0}$ is said to be recurrent, if for any $f \in L^1(\R^d, \mu)$ with $f \geq 0$ $\mu$-a.e. we have
$$
G^{\mu}f  \in \{0, \infty\}, \;\; \mu \text{-a.e.},
$$
i.e. $\mu(\{0< G^{\mu}f < \infty\})=0$.\\
$(T^{\mu}_t)_{t>0}$ is said to be transient, if there exists $g \in L^1(\R^d, \mu)$ with $g> 0$  $\mu$-a.e. such that
$$
G^{\mu}g <\infty, \;\; \mu \text{-a.e.}
$$
Let $(S_t)_{t>0}$ be a sub-Markovian $C_0$-semigroup of contractions  on $L^r(\R^d, \mu)$ for some $r \in [1, \infty)$ that satisfies $(S_t)_{t>0} = (T^{\mu}_t)_{t>0}$ on $L^r(\R^d, \mu)$. Then $(S_t)_{t>0}$ is called recurrent (resp. transient), if $(T^{\mu}_t)_{t>0}$ is recurrent (resp. transient).
\end{defn} 
Applying \cite[Proposition 4.9]{LT18} (or \cite[Theorem 3.38(i)]{LST20}) to $(T^{\mu}_t)_{t>0}$ and  $(T'^{, \mu}_t)_{t>0}$, we have:
\begin{prop} \label{strictirreducibilityprop}
$(T^{\mu}_t)_{t>0}$ is either recurrent or transient and $(T'^{, \mu}_t)_{t>0}$ is either recurrent or transient.
\end{prop}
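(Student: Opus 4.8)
The plan is to deduce both dichotomies from the single cited result \cite[Proposition 4.9]{LT18} (equivalently \cite[Theorem 3.38(i)]{LST20}), applied first to $(T^{\mu}_t)_{t>0}$ and then to $(T'^{, \mu}_t)_{t>0}$. For $(T^{\mu}_t)_{t>0}$ nothing more is needed than the observations already recorded in Section \ref{section1}: $(T^{\mu}_t)_{t>0}$ is (the $L^1$-extension of) the sub-Markovian $C_0$-semigroup generated by the closed extension of $(L,C_0^{\infty}(\R^d))$ obtained from \cite[Theorem 1.5]{St99}, where $A,\mathbf{G}$ satisfy {\bf (H)} and $\mu$ is infinitesimally invariant for $(L,C_0^{\infty}(\R^d))$; together with the strict irreducibility from \cite[Corollary 4.8]{LT18}, these are precisely the hypotheses under which \cite[Proposition 4.9]{LT18} yields that $(T^{\mu}_t)_{t>0}$ is either recurrent or transient.

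For $(T'^{, \mu}_t)_{t>0}$ the point is to recognize it as a semigroup of exactly the same type. Write $L'^{, \mu} f = \tfrac12 \mathrm{trace}(A\nabla^2 f) + \langle \widetilde{\mathbf{G}}, \nabla f\rangle$ with $\widetilde{\mathbf{G}} := 2\beta^{\rho, A} - \mathbf{G}$. Since $a_{ij}\in H^{1,p}_{loc}(\R^d)\cap C(\R^d)$ and $\rho\in H^{1,p}_{loc}(\R^d)\cap C(\R^d)$ with $\rho>0$ (so that $1/\rho$ is locally bounded), one has $\beta^{\rho,A}\in L^p_{loc}(\R^d,\R^d)$, hence $\widetilde{\mathbf{G}}\in L^p_{loc}(\R^d,\R^d)$, and thus the pair $(A,\widetilde{\mathbf{G}})$ again satisfies {\bf (H)}. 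Moreover it was noted in Section \ref{section1} that $\int_{\R^d} L'^{, \mu} f\, d\mu = 0$ for all $f\in C_0^{\infty}(\R^d)$, so $\mu$ is an infinitesimally invariant measure also for $(L'^{, \mu}, C_0^{\infty}(\R^d))$. By \cite[Remark 1.7(ii)]{St99} the $L^1(\R^d,\mu)$-semigroup generated by the \cite{St99}-closure of $(L'^{, \mu}, C_0^{\infty}(\R^d))$ is the adjoint of $(T^{\mu}_t)_{t>0}$, i.e. it coincides with $(T'^{, \mu}_t)_{t>0}$. Consequently \cite[Corollary 4.8]{LT18} (strict irreducibility) and \cite[Proposition 4.9]{LT18} (the dichotomy) apply verbatim to $(T'^{, \mu}_t)_{t>0}$, which is therefore either recurrent or transient.

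I expect no genuine obstacle here: the whole statement is a bookkeeping consequence of facts recalled in Sections \ref{section1} and \ref{invrec}. The only step requiring a line of verification is that $L'^{, \mu}$ is again covered by {\bf (H)} with the same infinitesimally invariant measure $\mu$ --- i.e. the membership $\widetilde{\mathbf{G}}\in L^p_{loc}(\R^d,\R^d)$ and the identification of $(T'^{, \mu}_t)_{t>0}$ with the semigroup associated to $(L'^{, \mu},C_0^{\infty}(\R^d))$ --- after which the cited dichotomy result can be quoted directly for both semigroups.
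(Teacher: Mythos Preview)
Your proposal is correct and follows exactly the paper's approach: the paper's entire proof is the one-line remark ``Applying \cite[Proposition 4.9]{LT18} (or \cite[Theorem 3.38(i)]{LST20}) to $(T^{\mu}_t)_{t>0}$ and $(T'^{, \mu}_t)_{t>0}$,'' and you have simply unpacked why that citation applies to the dual semigroup, using facts already recorded in Section~\ref{section1}. One small wording point: in the paper $(T'^{, \mu}_t)_{t>0}$ is \emph{defined} as the Stannat semigroup associated to $(L'^{, \mu}, C_0^{\infty}(\R^d))$ and only afterwards identified with the adjoint of $(T^{\mu}_t)_{t>0}$ on $L^2(\R^d,\mu)$, so no separate identification step is needed to invoke the cited dichotomy.
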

The statement of the following theorem is already known from  \cite[Proposition 2.5]{BCR} which is derived for a sub-Markovian resolvent of kernels with respect to a $\sigma$-finite sub-invariant measure in a context of weak duality. It was also previously derived in the framework of lower bounded semi-Dirichlet forms (cf. \cite[Theorem 1.3.4]{Osh}). We present here an alternative proof, which can also be used in more general situations than the one considered here.
\begin{theo}\label{cosemirec}
If $(T^{\mu}_t)_{t>0}$ is recurrent, then $(T'^{, \mu}_t)_{t>0}$ is recurrent.
\end{theo}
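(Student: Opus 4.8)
The plan is to argue by contradiction, combining the recurrence/transience dichotomy of Proposition \ref{strictirreducibilityprop} with the duality between the potential operators $G^{\mu}$ and $G'^{,\mu}$. Assume $(T^{\mu}_t)_{t>0}$ is recurrent and suppose, for contradiction, that $(T'^{,\mu}_t)_{t>0}$ is not; then by Proposition \ref{strictirreducibilityprop} it is transient, so there is $g\in L^1(\R^d,\mu)$ with $g>0$ $\mu$-a.e. and $G'^{,\mu}g<\infty$ $\mu$-a.e. Fixing a strictly positive $h\in L^1(\R^d,\mu)\cap L^\infty(\R^d,\mu)$ (e.g. $h(x)=e^{-\|x\|^2}$) and replacing $g$ by $g\wedge h$, we may and do assume in addition $g\in L^1(\R^d,\mu)\cap L^\infty(\R^d,\mu)$, since $G'^{,\mu}(g\wedge h)\le G'^{,\mu}g<\infty$ $\mu$-a.e.

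The first step is the duality identity
\begin{equation*}
\int_{\R^d}(G^{\mu}u)\,g\,d\mu=\int_{\R^d}u\,(G'^{,\mu}g)\,d\mu\qquad\text{in }[0,\infty]
\end{equation*}
for every $u\in L^1(\R^d,\mu)\cap L^\infty(\R^d,\mu)$ with $u\ge 0$. Since $(G'^{,\mu}_\alpha)_{\alpha>0}$ is the $L^2$-adjoint of $(G^{\mu}_\alpha)_{\alpha>0}$ and $u,g\in L^2(\R^d,\mu)$, one has $\int (G^{\mu}_\alpha u)\,g\,d\mu=\int u\,(G'^{,\mu}_\alpha g)\,d\mu$ for each $\alpha>0$ (both sides finite, using sub-Markovianity and the $L^1$/$L^\infty$ pairing); letting $\alpha\downarrow 0$ and using that $\alpha\mapsto G^{\mu}_\alpha u$ and $\alpha\mapsto G'^{,\mu}_\alpha g$ are non-increasing (resolvent equation, positivity preservation) with $\mu$-a.e.\ limits $G^{\mu}u$ and $G'^{,\mu}g$, the identity follows by monotone convergence on both sides.

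Next I would insert the specific test function $u:=h/(1+G'^{,\mu}g)$. Then $0\le u\le h$, hence $u\in L^1(\R^d,\mu)\cap L^\infty(\R^d,\mu)$; moreover $u>0$ $\mu$-a.e.\ because $h>0$ and $G'^{,\mu}g<\infty$ $\mu$-a.e.; and $u\cdot G'^{,\mu}g=h-u\le h$ $\mu$-a.e., so that $\int u\,(G'^{,\mu}g)\,d\mu\le\int h\,d\mu<\infty$. On the other hand, $u\ge 0$ and $u\not\equiv 0$, so by strict irreducibility of $(T^{\mu}_t)_{t>0}$ we have $G^{\mu}u\ge G^{\mu}_1u>0$ $\mu$-a.e., while by recurrence $G^{\mu}u\in\{0,\infty\}$ $\mu$-a.e.; hence $G^{\mu}u=\infty$ $\mu$-a.e. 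Since $g>0$ $\mu$-a.e.\ and $\mu\neq 0$, this forces $\int (G^{\mu}u)\,g\,d\mu=\infty$, contradicting the duality identity together with the finite bound just obtained. Therefore $(T'^{,\mu}_t)_{t>0}$ is not transient, and by Proposition \ref{strictirreducibilityprop} it is recurrent.

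I expect the main obstacle to be the careful handling of the duality identity with merely $L^1$ (and possibly $+\infty$-valued) data: one must make sure all integrals are interpreted in $[0,\infty]$ and that the limit $\alpha\downarrow 0$ commutes with integration on both sides. The two reductions doing the real work are the passage to a bounded transience witness $g$ and the choice $u=h/(1+G'^{,\mu}g)$, which simultaneously renders the right-hand side of the duality identity finite and keeps $u$ strictly positive; once these are in place, recurrence and strict irreducibility of $(T^{\mu}_t)_{t>0}$ (both recorded earlier) close the argument at once. This argument only uses the $L^2$-adjointness of the resolvents, positivity, and the dichotomy, so it indeed applies in considerably more general settings.
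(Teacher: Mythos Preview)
Your proposal is correct and follows the same overall strategy as the paper's proof: argue by contradiction via the recurrence/transience dichotomy (Proposition \ref{strictirreducibilityprop}), establish the duality identity $\int (G^{\mu}u)\,g\,d\mu=\int u\,(G'^{,\mu}g)\,d\mu$ through the $L^2$-adjointness of the resolvents and monotone convergence as $\alpha\downarrow 0$, and derive a contradiction by making one side finite and the other infinite.

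Your execution is more streamlined than the paper's. The paper first proves (Step 1, via $L^1$-strong continuity of the resolvent) that $G^{\mu}f=\infty$ $\mu$-a.e.\ on $\{f>0\}$ for any recurrent $f\ge 0$, and then (Step 2) carries out a case analysis on the measure of $\{0<G'^{,\mu}g<\infty\}$, applying the duality identity with $u$ equal to the indicator of a sublevel set $\{0<G'^{,\mu}g<N\}$ (or its intersection with a ball) in each case. You bypass the case split entirely with the single test function $u=h/(1+G'^{,\mu}g)$, which simultaneously keeps $u$ strictly positive and forces $\int u\,(G'^{,\mu}g)\,d\mu\le\int h\,d\mu<\infty$; and you replace the paper's Step 1 by a direct appeal to strict irreducibility (recorded in the introduction via \cite[Corollary 4.8]{LT18}) to conclude $G^{\mu}u\ge G^{\mu}_1 u>0$ $\mu$-a.e. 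Both ingredients are available under the standing hypotheses, so both routes are valid. The paper's version has the minor advantage of not needing to truncate $g$ to lie in $L^{\infty}$, while yours is shorter and avoids the case distinction; as you note, either argument transfers to more general duality settings.
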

\begin{proof}
{\bf Step 1)}
Let $f \in L^1(\R^d, \mu)$ be such that $f \geq 0$  $\mu$-a.e. and let $Z:= \{x \in \R^d : G^{\mu} f(x)=0 \}$. Then $0 \leq 1_Z G^{\mu}_{\alpha} f \leq 1_Z G^{\mu} f = 0$ $\mu$-a.e. for all $\alpha>0$. By the $L^1(\R^d, \mu)$-strong continuity of $(G^{\mu}_{\alpha})_{\alpha>0}$,
\begin{equation*} 
\int_{\R^d} 1_Z f \,d\mu = \lim_{\alpha \rightarrow \infty} \int_{\R^d} 1_Z \alpha G_{\alpha}^{\mu}f\, d\mu= 0,
\end{equation*}
hence $\mu ( \{f>0\} \cap Z) = 0$ and therefore $G^{\mu}f = \infty$ $\mu$-a.e. on $\{ f>0 \}$. \\
{\bf Step 2) } Now let $g \in L^1(\R^d, \mu)$ with $g> 0$ $\mu$-a.e. We will show $\mu ( \{ 0<G'^{, \mu} g< \infty \}) =0$.\\
First, suppose there exists $\varepsilon \in (0, \infty)$ such that $\mu(0< G'^{, \mu} g < \infty)= \varepsilon$. Then there exists $N \in \N$ such that $\frac{\varepsilon}{2}\leq \mu( 0< G'^{, \mu} g <N) \leq \varepsilon$. Define $U:= \{ 0<G'^{, \mu} g<N \}$. Then as we showed at Step 1, it holds $G^{\mu} 1_U = \infty$ $\mu$-a.e. on $U$ and by the monotone convergence theorem,
\begin{eqnarray*}
\infty  &=& \int_{\R^d} G^{\mu} 1_U \cdot g \, d\mu  =  \lim_{\alpha \rightarrow 0+} \int_{\R^d} G^{\mu}_{\alpha} 1_U \cdot g \, d\mu  \\
&=& \lim_{\alpha \rightarrow 0+}  \int_{\R^d} 1_U \cdot G'^{, \mu}_{\alpha} g d\mu = \int_{\R^d} 1_U G'^{, \mu} g \, d\mu \leq N \varepsilon,
\end{eqnarray*}
which is contradiction. Thus $\mu(0< G'^{, \mu} g < \infty) \in \{0, \infty \}$.
 \\
Next, suppose $\mu(0< G'^{, \mu} g < \infty)= \infty$. Then there exists $R>0$ such that 
$$
0< \mu \big (  \{ 0< G'^{, \mu} g < \infty  \} \cap B_R   \big ) \leq \mu(B_R)< \infty,
$$
and so there exists $M \in \N$ such that
$$
\frac{\delta}{2}\leq \mu \big(  \{ 0< G'^{, \mu} g < M  \} \cap B_R   \big) \leq  \delta:= \mu \big(  \{ 0< G'^{, \mu} g< \infty  \} \cap B_R   \big) \in (0, \infty).
$$
Define $V:= \{ 0< G'^{, \mu} g < M  \} \cap B_R$. Since $G^{\mu} 1_V= \infty$ $\mu$-a.e. on $V$ from Step 1, we have by the monotone convergence theorem,
\begin{eqnarray*}
\infty  &=& \int_{\R^d} G^{\mu} 1_V \cdot g \, d\mu  =  \lim_{\alpha \rightarrow 0+}  \int_{\R^d} G^{\mu}_{\alpha} 1_V \cdot g \, d\mu \\
&=& \lim_{\alpha \rightarrow 0+} \int_{\R^d} 1_U \cdot G'^{, \mu}_{\alpha} g d\mu  = \int_{\R^d} 1_V G'^{, \mu} g \, d\mu \leq M \delta,
\end{eqnarray*}
which is contradiction. Therefore, $\mu ( \{ 0<G'^{, \mu} g< \infty \}) =0$. \\
{\bf Step 3)} Suppose that $(T'^{, \mu}_t)_{t>0}$ is transient. Then by Step 2, $G'^{, \mu} g =0$ $\mu$-a.e. for some $g \in L^1(\R^d, \mu)$ with $g> 0$ $\mu$-a.e. Therefore, $0 \leq G'^{, \mu}_{\alpha}g \leq G'^{, \mu} g =0$ for any $\alpha>0$, so that $g = \lim_{\alpha \rightarrow \infty} \alpha G'^{,\mu}_{\alpha}g =0$ in $L^1(\R^d, \mu)$, which contradicts to the assumption of $g>0$ $\mu$-a.e. Therefore, $(T'^{, \mu}_t)_{t>0}$ is not transient, and $(T'^{, \mu}_t)_{t>0}$ is recurrent by Proposition \ref{strictirreducibilityprop}.
\end{proof}

\begin{cor} \label{existinvariancemea}
If $(T^{\mu}_t)_{t>0}$ is recurrent, then both $(T^{\mu}_t)_{t>0}$ and $(T'^{, \mu}_t)_{t>0}$ are conservative. In particular, $\mu$ is an invariant measure for both $(T^{\mu}_t)_{t>0}$ and $(T'^{, \mu}_t)_{t>0}$.
\end{cor}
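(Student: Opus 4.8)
\noindent The plan is to first prove directly that recurrence of $(T^{\mu}_t)_{t>0}$ forces conservativeness of $(T^{\mu}_t)_{t>0}$, then to pass to the dual semigroup via Theorem \ref{cosemirec} and repeat the argument verbatim, and finally to read off invariance of $\mu$ from Lemma \ref{equicondi32}. Concretely, I would isolate the statement: \emph{any recurrent sub-Markovian $C_0$-semigroup of contractions on $L^1(\R^d,\mu)$ of the form $(T^{\mu}_t)_{t>0}$ or $(T'^{,\mu}_t)_{t>0}$ is conservative}, and apply it twice, using that $(T'^{,\mu}_t)_{t>0}$ is recurrent by Theorem \ref{cosemirec}.

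For the conservativeness of $(T^{\mu}_t)_{t>0}$, fix $s>0$ and set $h_s:=1-T^{\mu}_s 1_{\R^d}$, so that $0\le h_s\le 1$ $\mu$-a.e. by sub-Markovianity. The first key step is the uniform bound $G^{\mu}h_s\le s$ $\mu$-a.e. (here $G^{\mu}h_s$ is understood as the monotone limit $\lim_{\alpha\to 0+}G^{\mu}_{\alpha}h_s$ of operators acting on $L^{\infty}(\R^d,\mu)$). This follows from the resolvent identity $G^{\mu}_{\alpha}T^{\mu}_s 1_{\R^d}=e^{\alpha s}\big(G^{\mu}_{\alpha}1_{\R^d}-\int_0^s e^{-\alpha t}T^{\mu}_t 1_{\R^d}\,dt\big)$, which gives $G^{\mu}_{\alpha}h_s=(1-e^{\alpha s})G^{\mu}_{\alpha}1_{\R^d}+e^{\alpha s}\int_0^s e^{-\alpha t}T^{\mu}_t 1_{\R^d}\,dt\le s\,e^{\alpha s}$ $\mu$-a.e. (using $0\le T^{\mu}_t 1_{\R^d}\le 1$ and $G^{\mu}_{\alpha}1_{\R^d}\ge 0$); letting $\alpha\to 0+$ yields $G^{\mu}h_s\le s<\infty$ $\mu$-a.e. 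Equivalently one may telescope $\int_0^N T^{\mu}_t h_s\,dt=\int_0^s T^{\mu}_t 1_{\R^d}\,dt-\int_N^{N+s}T^{\mu}_t 1_{\R^d}\,dt\le s$.

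The second step brings in the recurrence hypothesis, which is only formulated for $L^1(\R^d,\mu)$-functions, so a truncation is needed. Suppose $(T^{\mu}_t)_{t>0}$ is not conservative; then $\mu(\{h_s>0\})>0$, hence $\mu(A_n)>0$ for some $n\in\N$, where $A_n:=\{h_s>1/n\}$. Fix a strictly positive $\psi\in L^1(\R^d,\mu)$ with $\psi\le 1$ $\mu$-a.e. (such $\psi$ exists since $\mu=\rho\,dx$ is $\sigma$-finite), and put $g:=1_{A_n}\psi\in L^1(\R^d,\mu)$, $g\ge 0$. Since $1_{A_n}\le n\,h_s$ $\mu$-a.e., positivity preservation of $(T^{\mu}_t)_{t>0}$ and the bound $G^{\mu}h_s\le s$ give $G^{\mu}g\le n\,G^{\mu}h_s\le ns<\infty$ $\mu$-a.e. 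On the other hand $\{g>0\}=A_n$ up to a $\mu$-null set, so by the argument of Step~1 in the proof of Theorem \ref{cosemirec} (which uses only recurrence of $(T^{\mu}_t)_{t>0}$ and the $L^1(\R^d,\mu)$-strong continuity of $(G^{\mu}_{\alpha})_{\alpha>0}$) one gets $G^{\mu}g=\infty$ $\mu$-a.e. on $A_n$; since $\mu(A_n)>0$ this contradicts the previous estimate. Hence $(T^{\mu}_t)_{t>0}$ is conservative. By Theorem \ref{cosemirec}, $(T'^{,\mu}_t)_{t>0}$ is recurrent, and as $(T'^{,\mu}_t)_{t>0}$, $(G'^{,\mu}_{\alpha})_{\alpha>0}$ are again a sub-Markovian $C_0$-semigroup and $C_0$-resolvent of contractions on $L^1(\R^d,\mu)$, the identical two steps show that $(T'^{,\mu}_t)_{t>0}$ is conservative.

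Finally, conservativeness of $(T'^{,\mu}_t)_{t>0}$ together with Lemma \ref{equicondi32}(i) shows that $\mu$ is an invariant measure for $(T^{\mu}_t)_{t>0}$, and conservativeness of $(T^{\mu}_t)_{t>0}$ together with Lemma \ref{equicondi32}(ii) shows that $\mu$ is an invariant measure for $(T'^{,\mu}_t)_{t>0}$. I expect the main obstacle to be precisely the interplay of the middle two paragraphs: the natural object $1-T^{\mu}_s 1_{\R^d}$ need not belong to $L^1(\R^d,\mu)$ when $\mu$ is infinite, so it cannot be fed straight into the recurrence hypothesis; one must truncate on the set where $h_s$ is bounded away from $0$ and multiply by a strictly positive $L^1$-weight, while keeping the domination $g\le n\,h_s$ so that the finiteness $G^{\mu}g<\infty$ survives and contradicts $G^{\mu}g=\infty$ on a set of positive measure.
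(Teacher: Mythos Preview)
Your argument is correct. The paper's own proof is a one-liner: it applies Theorem \ref{cosemirec} to get recurrence of $(T'^{,\mu}_t)_{t>0}$, then simply invokes \cite[Corollary 20]{GT2} (a general result in the generalized Dirichlet form framework stating that recurrence implies conservativeness) for both semigroups, and finishes with Lemma \ref{equicondi32}(i),(ii) exactly as you do.

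The difference is that you replace the external citation \cite[Corollary 20]{GT2} by a self-contained, elementary proof of ``recurrent $\Rightarrow$ conservative''. Your telescoping estimate $\int_0^N T^{\mu}_t(1-T^{\mu}_s 1_{\R^d})\,dt=\int_0^s T^{\mu}_t 1_{\R^d}\,dt-\int_N^{N+s}T^{\mu}_t 1_{\R^d}\,dt\le s$ is the classical device for this, and the truncation $g=1_{A_n}\psi$ (with $\psi\in L^1(\R^d,\mu)$, $0<\psi\le 1$) is exactly what is needed to land in the $L^1$-setting where Definition \ref{defrectra} applies; Step~1 of the proof of Theorem \ref{cosemirec} then yields the contradiction. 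The gain of your route is independence from \cite{GT2}: everything needed is already in the present paper. The paper's route is shorter but relies on machinery developed elsewhere. The remaining parts (passage to the dual via Theorem \ref{cosemirec}, invariance via Lemma \ref{equicondi32}) coincide. A minor simplification: since $\mu$ is locally finite you could take $g=1_{A_n\cap B_R}$ for $R$ large enough that $\mu(A_n\cap B_R)>0$, avoiding the auxiliary weight $\psi$.
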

\begin{proof}
By Theorem \ref{cosemirec}, $(T'^{, \mu}_t)_{t>0}$ is recurrent. Thus, $(T^{\mu}_t)_{t>0}$ and $(T'^{,  \mu}_t)_{t>0}$ are both conservative by \cite[Corollary 20]{GT2} and the last assertion follows from Lemma \ref{equicondi32}(i), (ii). 
\end{proof}
\text{}\\
Using the framework of \cite[Section 1]{St99}, we will present a new approach to obtain the uniqueness of infinitesimally invariant measures for $(L, C_0^{\infty}(\R^d))$ and invariant measures for $(T^{\mu}_t)_{t>0}$. For  $i\in \{1,2\}$, let $\mu_i=\rho_i dx$ be infinitesimally invariant measures for $(L, C_0^{\infty}(\R^d))$ such that $\rho_i \in H^{1,p}_{loc}(\R^d) \cap C(\R^d)$ and $\rho_i(x)>0$ for all $x \in \R^d$ (see Remark \ref{equivconinv}(i)). Let $V$ be a bounded open subset of $\R^d$ and $(\mathcal{E}^{0,V, \mu_i}, D(\mathcal{E}^{0,V, \mu_i}))$ be the symmetric Dirichlet form defined as the closure of
$$
\mathcal{E}^{0,V, \mu_i}(u,v) :=\frac12 \int_{V} \langle A \nabla u, \nabla v \rangle d\mu_i, \quad u,v \in C_0^{\infty}(V)
$$
on $L^2(V, \mu_i)$. Denote the associated generator by $(L^{0, V, \mu_i}, D(L^{0, V, \mu_i}))$  and the corresponding sub-Markovian $C_0$-semigroup of contractions on $L^2(V, \mu_i)$ by $(T^{0,V, \mu_i}_t)_{t>0}$. Since $(T^{0,V, \mu_i}_t)_{t>0}$ is symmetric and sub-Markovian, $(T^{0,V, \mu_i}_t)_{t>0}|_{L^2(V, \mu_i)_b}$ can be extended to a sub-Markovian $C_0$-semigroup of contractions $(\overline{T}^{0,V, \mu_i}_t)_{t>0}$ on $L^1(V, \mu_i)$. Let $(\overline{L}^{0, V, \mu_i}, D(\overline{L}^{0, V, \mu_i}))$ be the generator on $L^1(V, \mu_i)$ associated with $(\overline{T}^{0,V, \mu_i}_t)_{t>0}$. Let $(\overline{L}^{V, \mu_i}, D(\overline{L}^{V, \mu_i}))$ be the closure of 
$$
\Big(L^{0,V, \mu_i}+\langle \mathbf{G}-\beta^{\rho_i, A}, \nabla \rangle,  D(L^{0,V, \mu_i})_b\Big)
$$ 
on $L^1(V, \mu_i)$, as constructed in \cite[Proposition 1.1(i)]{St99}. Then we obtain the following results.
\begin{lem} \label{intermedlem}
\begin{itemize}
\item[(i)] 
$D(\overline{L}^{0, V, \mu_1})_b = D(\overline{L}^{0,V, \mu_2})_b$ and 
\begin{equation*} \label{substracteq}
\overline{L}^{0, V, \mu_2} u  =\overline{L}^{0, V, \mu_1} u  + \langle \frac{A \nabla \rho_2}{2 \rho_2}- \frac{A \nabla \rho_1}{2 \rho_1}, \nabla u  \rangle, \quad \forall u \in D(\overline{L}^{0, V, \mu_1})_b.
\end{equation*}
\item[(ii)]
$D(\overline{L}^{V, \mu_1})=D(\overline{L}^{V, \mu_2})$ and $\overline{L}^{V, \mu_1} u = \overline{L}^{V, \mu_2}u$,\quad $\forall u \in D(\overline{L}^{V, \mu_1})$.

\end{itemize}
\end{lem}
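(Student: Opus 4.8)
The plan is to establish (i) by transporting the $L^1$-generator identity through the two associated symmetric Dirichlet forms $\mathcal E^{0,V,\mu_i}$, and then to obtain (ii) from (i) together with the elementary cancellation $\beta^{\rho_2,A}-\beta^{\rho_1,A}=\tfrac{A\nabla\rho_2}{2\rho_2}-\tfrac{A\nabla\rho_1}{2\rho_1}$ and a routine core argument.

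\textbf{Step 1 (set-up).} Since $\overline V$ is compact and each $\rho_i\in H^{1,p}_{loc}(\R^d)\cap C(\R^d)$ is strictly positive, $\rho_1$, $\rho_2$ and $h:=\rho_2/\rho_1\in H^{1,p}_{loc}(\R^d)\cap C(\R^d)$ are bounded above and bounded away from $0$ on $\overline V$; hence $L^r(V,\mu_1)=L^r(V,\mu_2)=L^r(V,dx)$ with equivalent norms for $r\in\{1,2\}$, and by local uniform ellipticity of $A$ one gets $D(\mathcal E^{0,V,\mu_1})=D(\mathcal E^{0,V,\mu_2})=H^{1,2}_0(V)$ with equivalent form norms. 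I would also record the following description of the $L^1$-generator, available within the framework of \cite[Section 1]{St99}: for bounded $u$ one has $u\in D(\overline L^{0,V,\mu_i})$ with $\overline L^{0,V,\mu_i}u=f$ if and only if $u\in H^{1,2}_0(V)$, $f\in L^1(V,\mu_i)$ and $\mathcal E^{0,V,\mu_i}(u,v)=-\int_V fv\,d\mu_i$ for all $v\in H^{1,2}_0(V)\cap L^\infty(V)$; the only nontrivial point, $D(\overline L^{0,V,\mu_i})_b\subset D(\mathcal E^{0,V,\mu_i})$, follows from $\mathcal E^{0,V,\mu_i}(u,u)=\lim_{t\downarrow0}\tfrac1t\langle u-\overline T^{0,V,\mu_i}_tu,u\rangle_{L^2(\mu_i)}\le\|u\|_{\infty}\,\|\overline L^{0,V,\mu_i}u\|_{L^1(\mu_i)}$.

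\textbf{Step 2 (proof of (i)).} Let $u\in D(\overline L^{0,V,\mu_1})_b$ and $w\in H^{1,2}_0(V)\cap L^\infty(V)$. Since $wh$ is again an admissible test function, applying the $\mu_1$-identity to $v=wh$, expanding $\nabla(wh)=h\nabla w+w\nabla h$, and using $A=A^{T}$ together with $\rho_1\nabla h=\nabla\rho_2-\tfrac{\rho_2}{\rho_1}\nabla\rho_1$, a short computation gives
\[
\mathcal E^{0,V,\mu_2}(u,w)=-\int_V\Big(\overline L^{0,V,\mu_1}u+\big\langle\tfrac{A\nabla\rho_2}{2\rho_2}-\tfrac{A\nabla\rho_1}{2\rho_1},\,\nabla u\big\rangle\Big)w\,d\mu_2 .
\]
The correction term lies in $L^1(V,dx)$ because $\tfrac{A\nabla\rho_i}{\rho_i}\in L^p(V,\R^d)$, $\nabla u\in L^2(V,\R^d)$ and $p>2$ (as $p>d\ge2$). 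By the ``if'' part of the description in Step 1, $u\in D(\overline L^{0,V,\mu_2})_b$ with $\overline L^{0,V,\mu_2}u=\overline L^{0,V,\mu_1}u+\langle\tfrac{A\nabla\rho_2}{2\rho_2}-\tfrac{A\nabla\rho_1}{2\rho_1},\nabla u\rangle$. Exchanging $\mu_1\leftrightarrow\mu_2$ yields the reverse inclusion, so the domains coincide and (i) follows.

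\textbf{Step 3 (proof of (ii)).} Using $\beta^{\rho_2,A}-\beta^{\rho_1,A}=\tfrac{A\nabla\rho_2}{2\rho_2}-\tfrac{A\nabla\rho_1}{2\rho_1}$ and (i), on the common set $\mathcal D:=D(\overline L^{0,V,\mu_1})_b=D(\overline L^{0,V,\mu_2})_b$ the two operators $u\mapsto\overline L^{0,V,\mu_i}u+\langle\mathbf G-\beta^{\rho_i,A},\nabla u\rangle$, $i=1,2$, coincide; call this common operator on $L^1(V,dx)$ by $\widetilde\Lambda$ (well defined since $\langle\mathbf G-\beta^{\rho_i,A},\nabla u\rangle\in L^1(V,dx)$ for $u\in\mathcal D\subset H^{1,2}_0(V)$ by the same H\"older estimate, now also using $\mathbf G\in L^p(V,\R^d)$). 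Because $D(L^{0,V,\mu_i})_b\subset D(\overline L^{0,V,\mu_i})_b=\mathcal D$ and $\overline L^{0,V,\mu_i}=L^{0,V,\mu_i}$ on $D(L^{0,V,\mu_i})_b$, the operator whose closure defines $\overline L^{V,\mu_i}$ is exactly the restriction of $\widetilde\Lambda$ to $D(L^{0,V,\mu_i})_b$. I would then show $\widetilde\Lambda\subset\overline L^{V,\mu_i}$ by approximation: for $u\in\mathcal D$ set $u_n:=n\,\overline G^{0,V,\mu_i}_nu\in D(L^{0,V,\mu_i})_b$; then $u_n\to u$ both in $L^1(V,dx)$ and in $D(\mathcal E^{0,V,\mu_i})$, so $\nabla u_n\to\nabla u$ in $L^2(V)$, while $\overline L^{0,V,\mu_i}u_n=n\,\overline G^{0,V,\mu_i}_n\overline L^{0,V,\mu_i}u\to\overline L^{0,V,\mu_i}u$ in $L^1$ and $\langle\mathbf G-\beta^{\rho_i,A},\nabla u_n\rangle\to\langle\mathbf G-\beta^{\rho_i,A},\nabla u\rangle$ in $L^1(V,dx)$ by H\"older and boundedness of $V$. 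Hence $u\in D(\overline L^{V,\mu_i})$ with $\overline L^{V,\mu_i}u=\widetilde\Lambda u$, i.e.\ $\overline L^{V,\mu_i}=\overline{\widetilde\Lambda}$ for $i=1,2$; since $\widetilde\Lambda$ does not depend on $i$ this gives $D(\overline L^{V,\mu_1})=D(\overline L^{V,\mu_2})$ and $\overline L^{V,\mu_1}u=\overline L^{V,\mu_2}u$.

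\textbf{Main obstacle.} The delicate part is the bookkeeping in Step 2 under the weak regularity $\rho_i\in H^{1,p}_{loc}$: verifying that $wh$ and $w/h$ are admissible test functions, that $\rho_1\nabla(\rho_2/\rho_1)=\nabla\rho_2-\tfrac{\rho_2}{\rho_1}\nabla\rho_1$ holds as an identity in $L^p_{loc}$, and that all products lie in the asserted $L^1$/$L^2$ spaces (this is where $p>d$ is used). Pinning down the precise $L^1$-generator description from \cite[Section 1]{St99} is the other point requiring care; the remainder is soft functional analysis together with the elementary algebra of $\beta^{\rho_i,A}$.
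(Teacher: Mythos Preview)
Your proof is correct and follows essentially the same route as the paper's: both arguments transport the $L^1$-generator identity from $\mu_1$ to $\mu_2$ by testing against $w\cdot\rho_2/\rho_1$, then deduce (ii) from (i) via the cancellation $\beta^{\rho_2,A}-\beta^{\rho_1,A}=\tfrac{A\nabla\rho_2}{2\rho_2}-\tfrac{A\nabla\rho_1}{2\rho_1}$ and a closure argument. The only differences are presentational: where the paper cites \cite[Lemma~1.2]{St99} and \cite[I.~Lemma~4.2.2.1]{BH} for the bounded-$L^1$-generator characterization, you supply the Dirichlet-form argument directly; and where the paper invokes step~1 of \cite[Proposition~1.1(ii)]{St99} to obtain $D(\overline{L}^{0,V,\mu_i})_b\subset D(\overline{L}^{V,\mu_i})$, you reprove this inclusion via the resolvent approximation $u_n=nG^{0,V,\mu_i}_nu$.
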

\begin{proof}
(i) Since $\rho_i$ with $i \in \{1,2\}$ is locally bounded below and above by strictly positive constants,  it holds $D(\mathcal{E}^{0,V, \mu_1})= D(\mathcal{E}^{0,V, \mu_2})$. Let $\psi=\frac{\rho_2}{\rho_1}$. By symmetry, it is enough to show $D(\overline{L}^{0,V, \mu_1})_b \subset D(\overline{L}^{0,V, \mu_2})_b$. Let $u \in D(\overline{L}^{0,V, \mu_1})_b$ and $v \in C_0^{\infty}(V)$. Then $u \in D(\mathcal{E}^{0,V, \mu_1})$  by \cite[Lemma 1.2(i)]{St99}, and $v \psi \in D(\mathcal{E}^{0, V, \mu_1})$. 
By \cite[Lemma 1.2(iii)]{St99},
$$
-\int_{V} \overline{L}^{0,V, \mu_1} u\cdot v d\mu_2= -\int_{V} \overline{L}^{0,V, \mu_1} u\cdot v\psi  d\mu_1 = \mathcal{E}^{0,V, \mu_1}(u, v\psi),
$$
and
\begin{eqnarray*}
\mathcal{E}^{0,V, \mu_1}(u,v \psi) = \frac{1}{2} \int_V \langle A\nabla u, \nabla(v \psi)  \rangle d\mu_1  = \frac12 \int_{V} \langle A \nabla u, \nabla v   \rangle \psi \rho_1 dx +\frac12 \int_{V}  \langle  A \nabla u, \nabla \psi  \rangle v d\mu_1.
\end{eqnarray*}
Therefore,       
$$
\mathcal{E}^{0,V, \mu_2}(u,v)= -\int_{V} \overline{L}^{0,V, \mu_1} u\cdot v  d\mu_2 - \frac{1}{2} \int_V \langle A \nabla u, \frac{\nabla \psi}{\psi} \rangle v d\mu_2,
$$
and  $\overline{L}^{0,V, \mu_1} u$ $+ \frac12 \langle A \nabla u, \frac{\nabla \psi}{\psi} \rangle \in L^1(V, \mu_2)$. Hence by \cite[I. Lemma 4.2.2.1]{BH}, $u \in D(\overline{L}^{0, V, \mu_2})_b$ and 
$$
\overline{L}^{0, V, \mu_2} u =\overline{L}^{0, V, \mu_1} u  +\frac12 \langle A \nabla u, \frac{\nabla \psi}{\psi} \rangle = \overline{L}^{0, V, \mu_1} u  + \langle \frac{A \nabla \rho_2}{2 \rho_2}- \frac{A \nabla \rho_1}{2 \rho_1}, \nabla u  \rangle.
$$ 
(ii) By the step 1 in the proof of \cite[Proposition 1.1 (ii)]{St99}, $D(\overline{L}^{0,V, \mu_i})_b \subset D(\overline{L}^{V, \mu_i})$ and
$$
\overline{L}^{V, \mu_i}u=\overline{L}^{0,V, \mu_i} u + \langle \mathbf{G}-\beta^{\rho_i,A}, \nabla u \rangle, \quad u \in D(\overline{L}^{0,V, \mu_i})_b.
$$
Thus for any $u \in D(\overline{L}^{0,V, \mu_1})_b =D(\overline{L}^{0,V, \mu_2})_b$, it holds by (i)
\begin{eqnarray*}
\overline{L}^{V, \mu_2} u &=&  \overline{L}^{0,V, \mu_2} u + \langle \mathbf{G}- \beta^{\rho_2, A}, \nabla u \rangle \\
&=& \overline{L}^{0, V, \mu_1} u  + \langle \frac{A \nabla \rho_2}{2 \rho_2}- \frac{A \nabla \rho_1}{2 \rho_1}, \nabla u  \rangle+ \langle \mathbf{G}- \beta^{\rho_2, A}, \nabla u \rangle \\
&=& \overline{L}^{0,V, \mu_1} u + \langle \mathbf{G}- \beta^{\rho_1, A}, \nabla u \rangle = \overline{L}^{V, \mu_1} u.
\end{eqnarray*}
Observe that $D(L^{0, V, \mu_i}) \subset D(\overline{L}^{0, V, \mu_i})$ with $\overline{L}^{0, V, \mu_i} v =L^{0, V, \mu_i} v$  on $L^1(V, \mu_i)$ for all $v \in D(L^{0, V, \mu_i})$ by \cite[Lemma 1.12]{Eb}.
Since $(\overline{L}^{V, \mu_i}, D(\overline{L}^{V, \mu_i}))$ is the closure of $\big(L^{0, V, \mu_i}+\langle \mathbf{G}-\beta^{\rho_i, A}, \nabla \rangle, D(L^{0, V, \mu_i})_b\big)$ on $L^1(V, \mu_i)$ by \cite[Proposition 1.1]{St99}, the assertion follows.
\end{proof}

\begin{theo}\label{pathuniquenesshunt}
Let $\mu_i=\rho_i dx$, $i\in \{1,2\}$, be infinitesimally invariant measures for $(L, C_0^{\infty}(\R^d))$ where the $\rho_i$ have the regularity of Remark \ref{equivconinv}(i). Then for any $f \in L^1(\R^d, \mu_1) \cap L^1(\R^d, \mu_2)$ and $t>0$,
$$
T^{\mu_1}_t f = T^{\mu_2}_t f, \;\; \text{ a.e.}   \text{ on $\R^d$.}
$$
In particular, $(T^{\mu_1}_t)_{t>0}$ is conservative, if and only if $(T^{\mu_2}_t)_{t>0}$ is conservative and $(T^{\mu_1}_t)_{t>0}$ is recurrent, if and only if $(T^{\mu_2}_t)_{t>0}$ is recurrent. Moreover, $\widetilde{\mu}$ is an invariant measure for $(T^{\mu_1}_t)_{t>0}$, if and only if $\widetilde{\mu}$ is an invariant measure for $(T^{\mu_2}_t)_{t>0}$.
\end{theo}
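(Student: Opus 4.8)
I would prove Theorem~\ref{pathuniquenesshunt} by reducing everything to the localized operators of Lemma~\ref{intermedlem}, where $\mu_1$ and $\mu_2$ are comparable. First note that, by Remark~\ref{equivconinv}(i), $\mu_1$ and $\mu_2$ are both equivalent to Lebesgue measure, hence mutually equivalent; thus ``a.e.'' is unambiguous, and for every bounded open $V$ the spaces $L^1(V,\mu_1)$ and $L^1(V,\mu_2)$ coincide as vector spaces and carry equivalent norms (the ratio $\rho_2/\rho_1$ is bounded between two positive constants on $\overline V$ since the $\rho_i$ are continuous and strictly positive). By Lemma~\ref{intermedlem}(ii), $(\overline L^{V,\mu_1},D(\overline L^{V,\mu_1}))$ and $(\overline L^{V,\mu_2},D(\overline L^{V,\mu_2}))$ are the same operator on this common space. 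Since the $C_0$-semigroup generated by an operator, and its resolvent $(\alpha-\overline L^{V,\mu_i})^{-1}$ which is the algebraic inverse, depend only on the operator and are unchanged under passing to an equivalent norm, the localized resolvents agree: $\overline G^{V,\mu_1}_\alpha=\overline G^{V,\mu_2}_\alpha$ for all $\alpha>0$, and likewise the localized semigroups agree.

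Next I would transfer this to the global level through an exhaustion $V_1\subset V_2\subset\cdots$ of $\R^d$ by bounded open sets. The construction of $(\overline L^{\mu_i},D(\overline L^{\mu_i}))$ in \cite[Theorem 1.5, Proposition 1.1]{St99} (see also \cite{LST20}) gives, for $0\le f\in L^1(\R^d,\mu_i)\cap L^\infty(\R^d,\mu_i)$, that $G^{\mu_i}_\alpha f=\lim_{n\to\infty}\overline G^{V_n,\mu_i}_\alpha(f|_{V_n})$ (extended by $0$ off $V_n$), as an increasing limit in $L^1(\R^d,\mu_i)$. By the previous paragraph the $n$-th terms coincide for $i=1,2$, so letting $n\to\infty$ yields $G^{\mu_1}_\alpha f=G^{\mu_2}_\alpha f$ a.e.; approximating a general $f\in L^1(\R^d,\mu_1)\cap L^1(\R^d,\mu_2)$ simultaneously in both norms by bounded compactly supported functions (which lie in both spaces, with the resolvents acting as contractions) extends this to all such $f$ and all $\alpha>0$. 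Finally, by the exponential formula $T^{\mu_i}_t f=\lim_{n\to\infty}\bigl(\tfrac nt\bigr)^n\bigl(G^{\mu_i}_{n/t}\bigr)^n f$, convergent in $L^1(\R^d,\mu_i)$ and hence, along a subsequence, a.e., and since the approximating terms are a.e.\ equal for the two indices, we get $T^{\mu_1}_t f=T^{\mu_2}_t f$ a.e.\ on $\R^d$ for all $f\in L^1(\R^d,\mu_1)\cap L^1(\R^d,\mu_2)$ and $t>0$.

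The consequences follow by monotone extension to $L^\infty$. Taking $\chi_k:=\mathbf 1_{B_k}\uparrow 1$, each $\chi_k\in L^1(\R^d,\mu_1)\cap L^1(\R^d,\mu_2)$ and $T^{\mu_i}_t\mathbf 1_{\R^d}=\sup_k T^{\mu_i}_t\chi_k$, so $T^{\mu_1}_t\mathbf 1_{\R^d}=T^{\mu_2}_t\mathbf 1_{\R^d}$ a.e., whence conservativeness of one semigroup is equivalent to that of the other. Likewise $\mathbf 1_{A\cap B_k}\uparrow\mathbf 1_A$ gives $T^{\mu_1}_t\mathbf 1_A=T^{\mu_2}_t\mathbf 1_A$ a.e.; since $\widetilde\mu\ll\mu_1\Leftrightarrow\widetilde\mu\ll\mu_2$ and the requirement $Lf\in L^1(\R^d,\widetilde\mu)$ does not refer to $\mu_i$, the defining equation of an invariant measure holds for $(T^{\mu_1}_t)_{t>0}$ iff it holds for $(T^{\mu_2}_t)_{t>0}$. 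For recurrence I would use the dichotomy of Proposition~\ref{strictirreducibilityprop} and argue via transience: letting $\alpha\downarrow0$ in $G^{\mu_i}_\alpha f\uparrow G^{\mu_i}f$ gives $G^{\mu_1}f=G^{\mu_2}f$ a.e.\ for $0\le f\in L^1(\R^d,\mu_1)\cap L^1(\R^d,\mu_2)$; if $(T^{\mu_1}_t)_{t>0}$ is transient, pick $g_1>0$ a.e.\ in $L^1(\R^d,\mu_1)$ with $G^{\mu_1}g_1<\infty$ a.e.\ and some $h>0$ a.e.\ in $L^1(\R^d,\mu_1)\cap L^1(\R^d,\mu_2)$; then $g:=g_1\wedge h$ lies in both $L^1$-spaces, $g>0$ a.e., and $G^{\mu_2}g=G^{\mu_1}g\le G^{\mu_1}g_1<\infty$ a.e., so $(T^{\mu_2}_t)_{t>0}$ is transient. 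By symmetry, $(T^{\mu_1}_t)_{t>0}$ is recurrent iff $(T^{\mu_2}_t)_{t>0}$ is.

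The main obstacle is the second paragraph. One cannot compare $\overline L^{\mu_1}$ and $\overline L^{\mu_2}$ directly on $\R^d$: $\rho_1$ and $\rho_2$ are only \emph{locally} comparable, so the symmetric parts, the pre-domains $D(L^{0,\mu_i})_{0,b}$, and the two global $L^1$-closure topologies genuinely differ at infinity, and it is precisely the localization of Lemma~\ref{intermedlem} that restores comparability. The work therefore lies in propagating the local identity through the exhaustion, i.e.\ in making precise that the Stannat extension's global resolvent is the monotone limit of the localized Dirichlet resolvents, and in handling the two inequivalent global $L^1$-norms, which forces one to pass to a.e.\ rather than norm convergence at the limiting steps.
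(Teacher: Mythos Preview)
Your proof is correct and follows essentially the same route as the paper: identify the localized resolvents via Lemma~\ref{intermedlem}(ii), pass to the global resolvents through Stannat's exhaustion construction \cite[Theorem~1.5(b)]{St99}, and then recover the semigroup. The one technical difference is in that last step: you use the exponential (Yosida) formula and extract a.e.-convergent subsequences in the two global $L^1$-spaces, whereas the paper restricts to $L^1(B)$ for an arbitrary bounded Borel set $B$ (where the $\mu_1$- and $\mu_2$-norms are equivalent), reads the resolvent identity as equality of Laplace transforms $\int_0^\infty e^{-\alpha t}\,1_B T^{\mu_i}_t f\,dt$ in $L^1(B)$, and applies the Post--Widder inversion formula \cite[Theorem~1.7.3]{ACHN} together with right continuity of $t\mapsto 1_B T^{\mu_i}_t f$---this keeps the limiting argument inside a single Banach space and avoids the subsequence extraction. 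Your explicit derivation of the consequences (conservativeness, invariance, recurrence via the transience dichotomy) is more detailed than the paper's, which simply notes that ``the remaining assertions are easily derived.''
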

\begin{proof} 
Let $i \in \{1,2 \}$, and $(\overline{G}_{\alpha}^{V, \mu_i})_{\alpha>0}$ be the resolvents defined by $\overline{G}_{\alpha}^{V, \mu_i} := (\alpha-\overline{L}^{V, \mu_i})^{-1}$. Then by Lemma \ref{intermedlem},
\begin{equation} \label{locresequ}
\overline{G}_{\alpha}^{V, \mu_1} f = \overline{G}_{\alpha}^{V, \mu_2} f, \qquad \forall f \in L^1(V, \mu_1)= L^1(V, \mu_2).
\end{equation}
By \eqref{locresequ} and the construction of $(G^{\mu_i}_{\alpha})_{\alpha>0}$ on $L^1(\R^d, \mu_i)$ in \cite[Theorem 1.5(b)]{St99}, 
$$
G^{\mu_1}_{\alpha} f  = G^{\mu_2}_{\alpha} f \,\text{\; a.e., \, $\forall f \in L^1(\R^d, \mu_1) \cap L^1(\R^d, \mu_2)$, $\alpha>0$}.
$$ 
Thus, for $f \in L^1(\R^d, \mu_1) \cap L^1(\R^d, \mu_2)$, $\alpha>0$, it holds for an arbitrary bounded Borel set $B$
$$
1_B G^{\mu_1}_{\alpha} f  = 1_B G^{\mu_2}_{\alpha} f \;\; \text{ in \;$L^1(B)$},
$$
hence
\begin{equation} \label{laplacetrans} 
\int_0^{\infty}  e^{-\alpha t} 1_BT^{\mu_1}_t f dt =\int_0^{\infty} e^{-\alpha t} 1_B T^{\mu_2}_t f dt \;\; \;\text{ in $L^1(B)$}.
\end{equation}
As a consequence of the Post-Widder inversion formula (\cite[Theorem 1.7.3]{ACHN}) and
the right continuity of $t \in (0, \infty) \mapsto 1_B T^{\mu_i}_t f$ in $L^1(B)$ for $i \in \{1, 2\}$, \eqref{laplacetrans} implies
$$
1_B T^{\mu_1}_t f =1_B T^{\mu_2}_t f, \quad \text{ in $L^1(B)$}, \;\; \forall t>0.
$$
Therefore, the first assertion follows and the remaining assertions are easily derived.
\end{proof}

\begin{prop} \label{prop1:9}
Assume {\bf (H$^{\prime}$)} and let $\mathbf{B}\equiv 0$, i.e. $\mathbf{G}=\beta^{\rho,A}$.
Then
$$
T^{\mu}_t f = T^{0, \mu}_t f, \quad \; \forall f \in L^2(\R^d, \mu),\; t>0,
$$
where $(T^{0, \mu}_t)_{t>0}$ is the semigroup associated with the symmetric Dirichlet form defined in \eqref{symDFdef}.
\end{prop}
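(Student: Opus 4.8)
The plan is to trace Stannat's construction of $(T^{\mu}_t)_{t>0}$ and to observe that, when $\mathbf{B}\equiv 0$, it collapses to the canonically $L^1$-extended semigroup of the symmetric Dirichlet form \eqref{symDFdef}. Since $\mathbf{G}=\beta^{\rho,A}$, the perturbation term $\langle\mathbf{G}-\beta^{\rho,A},\nabla\rangle$ that appears throughout Section \ref{section1} and in Lemma \ref{intermedlem} vanishes identically. Hence, for every bounded open $V\subset\R^d$, the operator $(\overline{L}^{V,\mu},D(\overline{L}^{V,\mu}))$ --- defined in Section \ref{invrec} as the closure of $(L^{0,V,\mu}+\langle\mathbf{G}-\beta^{\rho,A},\nabla\rangle,D(L^{0,V,\mu})_b)$ on $L^1(V,\mu)$ --- is simply the closure of $(L^{0,V,\mu},D(L^{0,V,\mu})_b)$ on $L^1(V,\mu)$, which coincides with the $L^1(V,\mu)$-generator $(\overline{L}^{0,V,\mu},D(\overline{L}^{0,V,\mu}))$ of the extended local symmetric semigroup $(\overline{T}^{0,V,\mu}_t)_{t>0}$ (for bounded $V$ one has $L^2(V,\mu)\subset L^1(V,\mu)$ and $D(L^{0,V,\mu})_b$ is an operator core for this $L^1$-generator, cf. \cite[Lemma 1.12]{Eb}). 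Consequently $\overline{G}^{V,\mu}_{\alpha}=\overline{G}^{0,V,\mu}_{\alpha}$ on $L^1(V,\mu)$ for all $\alpha>0$.

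Next I would invoke the construction of $(G^{\mu}_{\alpha})_{\alpha>0}$ on $L^1(\R^d,\mu)$ from \cite[Theorem 1.5(b)]{St99} (exactly as it is used in the proof of Theorem \ref{pathuniquenesshunt}), which realizes $G^{\mu}_{\alpha}f$ as the limit of $\overline{G}^{V,\mu}_{\alpha}f$ along an exhaustion $V\uparrow\R^d$; by the first paragraph this equals $\lim_{V\uparrow\R^d}\overline{G}^{0,V,\mu}_{\alpha}f$. On the other side, the global form $(\mathcal{E}^{0,\mu},D(\mathcal{E}^{0,\mu}))$ is the closure of \eqref{symDFdef} on $C_0^{\infty}(\R^d)$, and since each element of $C_0^{\infty}(\R^d)$ has compact support, $\bigcup_V C_0^{\infty}(V)$ is a form core of it; thus $(\mathcal{E}^{0,V,\mu})$ increases to $(\mathcal{E}^{0,\mu})$ as $V\uparrow\R^d$, and by the monotone convergence theorem for closed symmetric forms $G^{0,V,\mu}_{\alpha}g\to G^{0,\mu}_{\alpha}g$ in $L^2(\R^d,\mu)$ for every $g\in L^2(\R^d,\mu)$. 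For $f\in L^1(\R^d,\mu)\cap L^2(\R^d,\mu)$ with $f\ge 0$, the functions $\overline{G}^{0,V,\mu}_{\alpha}f$ increase in $V$ and are bounded in $L^1(\R^d,\mu)$ by $\alpha^{-1}\|f\|_{L^1(\R^d,\mu)}$, so the limit exists in $L^1(\R^d,\mu)$ and, by the $L^2$-convergence above, must coincide $\mu$-a.e. with $G^{0,\mu}_{\alpha}f$; splitting $f$ into positive and negative parts we obtain $G^{\mu}_{\alpha}f=\lim_{V\uparrow\R^d}\overline{G}^{0,V,\mu}_{\alpha}f=G^{0,\mu}_{\alpha}f$ for all $f\in L^1(\R^d,\mu)\cap L^2(\R^d,\mu)$ and $\alpha>0$.

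Finally, since $L^1(\R^d,\mu)\cap L^2(\R^d,\mu)$ is dense in $L^2(\R^d,\mu)$ and both $G^{\mu}_{\alpha}$ and $G^{0,\mu}_{\alpha}$ are bounded on $L^2(\R^d,\mu)$, it follows that $G^{\mu}_{\alpha}=G^{0,\mu}_{\alpha}$ on $L^2(\R^d,\mu)$ for every $\alpha>0$, and since a $C_0$-semigroup is uniquely determined by its resolvent this yields $T^{\mu}_t f=T^{0,\mu}_t f$ for all $f\in L^2(\R^d,\mu)$ and $t>0$. The main obstacle is the identification $\lim_{V\uparrow\R^d}\overline{G}^{0,V,\mu}_{\alpha}=G^{0,\mu}_{\alpha}$, i.e. verifying that Stannat's local-approximation procedure, specialized to the symmetric situation $\mathbf{B}\equiv0$, reproduces precisely the resolvent of the global symmetric Dirichlet form \eqref{symDFdef}; once the $L^2$-form-convergence is carefully transferred to $L^1(\R^d,\mu)$ by monotonicity and positivity, the rest is routine bookkeeping. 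An essentially equivalent alternative is to argue directly on $L^1(\R^d,\mu)$ that Stannat's $(\overline{L}^{\mu},D(\overline{L}^{\mu}))$ equals the $L^1$-generator of the extended global symmetric semigroup and then to restrict to $L^2(\R^d,\mu)$.
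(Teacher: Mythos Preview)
Your approach is correct but considerably more involved than the paper's. You reconstruct $G^{\mu}_{\alpha}$ from Stannat's local resolvents $\overline{G}^{V,\mu}_{\alpha}$, identify these with $\overline{G}^{0,V,\mu}_{\alpha}$ when $\mathbf{B}\equiv 0$, and then pass to the global symmetric resolvent $G^{0,\mu}_{\alpha}$ via a monotone form-convergence argument. The paper instead argues directly on $\R^d$: by \cite[Theorem 1.5(c)]{St99} one has $G^{\mu}_{\alpha}f\in D(\overline{L}^{\mu})_b\subset D(\mathcal{E}^{0,\mu})$ for $f\in L^1(\R^d,\mu)_b$, and (since $\mathbf{B}\equiv 0$) the same reference together with \cite[Theorem 2.8]{MR} gives
\[
\mathcal{E}^{0,\mu}_{\alpha}(G^{\mu}_{\alpha}f,v)=\int_{\R^d}(\alpha-\overline{L}^{\mu})G^{\mu}_{\alpha}f\cdot v\,d\mu=\int_{\R^d} fv\,d\mu=\mathcal{E}^{0,\mu}_{\alpha}(G^{0,\mu}_{\alpha}f,v),\quad v\in C_0^{\infty}(\R^d),
\]
whence $G^{\mu}_{\alpha}f=G^{0,\mu}_{\alpha}f$ by the variational characterization of $G^{0,\mu}_{\alpha}$. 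The passage from resolvent to semigroup is then the same as yours (the paper cites \cite[Theorem 1.7.3]{ACHN}, as in the proof of Theorem \ref{pathuniquenesshunt}).

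The difference is that the paper exploits the \emph{global} embedding $D(\overline{L}^{\mu})_b\subset D(\mathcal{E}^{0,\mu})$ provided by \cite[Theorem 1.5(c)]{St99}, which makes the identification a one-line variational computation and entirely avoids the step you flag as the main obstacle (identifying $\lim_{V\uparrow\R^d}\overline{G}^{0,V,\mu}_{\alpha}$ with $G^{0,\mu}_{\alpha}$). Your route is more constructive and works if you are willing to supply the monotone form-convergence argument, but the paper's shortcut is both shorter and avoids any delicate limiting procedure.
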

\begin{proof}
By \cite[Theorem 1.5(c)]{St99}, for $f\in L^1(\R^d, \mu)_b$, $\alpha>0$ it holds $G^{\mu}_{\alpha}f  \in D(\overline{L}^{\mu})_b\subset D(\mathcal{E}^{0, \mu})$, and for any $v\in C_0^{\infty}(\R^d)$ it follows from \cite[Theorem 1.5(c)]{St99} and \cite[Theorem 2.8]{MR} that
$$
\mathcal{E}^{0, \mu}_{\alpha}(G^{\mu}_{\alpha}f,v)=\int_{\R^d}(\alpha-\overline{L}^{\mu}) G^{\mu}_{\alpha} f \cdot vd\mu=\int_{\R^d}f vd\mu=\mathcal{E}^{0, \mu}_{\alpha}(G^{0, \mu}_{\alpha}f,v).
$$
Thus $G^{\mu}_{\alpha}f =G^{0, \mu}_{\alpha} f$. By \cite[Theorem 1.7.3]{ACHN} and the right continuity of $t \in (0, \infty) \mapsto T^{\mu}_t f$ (and $T^{0, \mu}_t f$) in $L^2(\R^d, \mu)$, the assertion follows as in the proof of Theorem \ref{pathuniquenesshunt}.
\end{proof}

\begin{rem} \label{nonsymmetricequi} \rm
Assume {\bf (H$^{\prime}$)} and that there exists a constant $M>0$ such that
$$
\left| \int_{\R^d} \langle \mathbf{B}, \nabla f \rangle g d\mu \right| \leq M \mathcal{E}_1^{0, \mu}(f,f)^{1/2} \mathcal{E}_1^{0, \mu}(g,g)^{1/2}, \quad \forall f,g \in C_0^{\infty}(\R^d),
$$
where $(\mathcal{E}^{0, \mu}, D(\mathcal{E}^{0, \mu}))$ is defined as in \eqref{symDFdef}.
Let $(S_t)_{t>0}$ be the sub-Markovian $C_0$-semigroup of contractions on $L^2(\R^d, \mu)$ associated with the (possibly non-symmetric) weak sectorial Dirichlet form $(\mathcal{B}, D(\mathcal{B}))$ defined as the closure (see \cite[Chapter I]{MR}) of
\begin{equation} \label{sectodirichlet}
\mathcal{B}(f,g):=\frac12 \int_{\R^d}  \langle A \nabla f, \nabla g \rangle d\mu-\int_{\R^d} \langle \mathbf{B}, \nabla f \rangle g d\mu, \quad f,g \in C_0^{\infty}(\R^d).
\end{equation}
Using a similar method as in Proposition \ref{prop1:9}, it holds
$$
T^{\mu}_t f = S_t f, \quad \; \forall f \in L^2(\R^d, \mu),\, t>0.
$$
\end{rem}
\text{}\\
By Proposition \ref{prop1:9} and Remark \ref{nonsymmetricequi}, recurrence and transience of the semigroup associated with a weak sectorial Dirichlet form defined as the closure of \eqref{sectodirichlet} can be investigated in the here considered framework of generalized Dirichlet forms (see Definition \ref{defrectra}). \\
Under assumption {\bf (H)}, the statement of the following proposition can be concluded from \cite[Theorems 3.38(iii) and 3.46]{LST20}, but we emphasize that it also holds under the mere assumption {\bf (a)} of \cite{LST20} or \cite{LT19} (see proof of Proposition \ref{strictirreducibilityprop}), which is more general than the assumptions used in \cite[Theorems 3.38(iii) and 3.46]{LST20}.
\begin{prop} \label{fininvariant}
Assume $\mu$ is finite. If $\mu$ is an invariant measure for $(T^{\mu}_t)_{t>0}$ (or equivalently by Lemma \ref{equicondi32}(iii), $(T^{\mu}_t)_{t>0}$ is conservative), then $(T^{\mu}_t)_{t>0}$ is recurrent.
\end{prop}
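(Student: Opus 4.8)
The plan is to prove recurrence by contradiction, ruling out transience; by Proposition~\ref{strictirreducibilityprop} this suffices. So I would assume $(T^{\mu}_t)_{t>0}$ is transient and fix $g\in L^1(\R^d,\mu)$ with $g>0$ $\mu$-a.e. and $h:=G^{\mu}g<\infty$ $\mu$-a.e. First I would note that $h>0$ $\mu$-a.e.: arguing exactly as in Step~1 of the proof of Theorem~\ref{cosemirec}, with $Z:=\{h=0\}$ one gets $\int_{\R^d}1_Z\,g\,d\mu=\lim_{\alpha\to\infty}\int_{\R^d}1_Z\,\alpha G^{\mu}_{\alpha}g\,d\mu=0$, hence $\mu(\{g>0\}\cap Z)=0$, and since $g>0$ $\mu$-a.e. this forces $\mu(Z)=0$.

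The crucial observation is that, because $\mu$ is a \emph{finite} invariant measure, $h$ can never lie in $L^1(\R^d,\mu)$ (indeed $\int_{\R^d}\int_0^N T^{\mu}_t g\,dt\,d\mu=N\int_{\R^d}g\,d\mu\to\infty$), so I cannot apply $T^{\mu}_s$ to $h$ within the $L^1$-framework and must instead work with the truncations $h\wedge N\in L^1(\R^d,\mu)\cap L^\infty(\R^d,\mu)$, $N\in\N$, controlling them by a tail of the potential. Concretely, I set $w_s:=h-\int_0^s T^{\mu}_t g\,dt$ for $s>0$; since $\int_0^s T^{\mu}_t g\,dt\uparrow G^{\mu}g=h$ as $s\to\infty$ (by the definition of $G^\mu$), we have $0\le w_s\downarrow 0$ $\mu$-a.e. as $s\to\infty$. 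Next, commuting the bounded operator $T^{\mu}_s$ with the Bochner integral $G^{\mu}_{\alpha}g=\int_0^\infty e^{-\alpha t}T^{\mu}_t g\,dt$ in $L^1(\R^d,\mu)$ gives
$$
T^{\mu}_s G^{\mu}_{\alpha}g=\int_s^\infty e^{-\alpha(u-s)}T^{\mu}_u g\,du\le\int_s^\infty T^{\mu}_u g\,du=w_s\qquad\mu\text{-a.e.},
$$
hence $T^{\mu}_s(G^{\mu}_{\alpha}g\wedge N)\le T^{\mu}_s G^{\mu}_{\alpha}g\le w_s$; letting $\alpha\downarrow 0$ and using $G^{\mu}_{\alpha}g\wedge N\uparrow h\wedge N$ in $L^1(\R^d,\mu)$ together with the positivity and $L^1$-continuity of $T^{\mu}_s$, I obtain $T^{\mu}_s(h\wedge N)\le w_s$ $\mu$-a.e. for all $N\in\N$, $s>0$.

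To finish, fix $N\in\N$. From $0\le T^{\mu}_s(h\wedge N)\le w_s\to 0$ $\mu$-a.e. as $s\to\infty$, the bound $T^{\mu}_s(h\wedge N)\le N\,T^{\mu}_s 1\le N$, and the finiteness of $\mu$ (so that the constant $N$ lies in $L^1(\R^d,\mu)$), dominated convergence yields $\int_{\R^d}T^{\mu}_s(h\wedge N)\,d\mu\to 0$ as $s\to\infty$. But $\mu$ is an invariant measure for $(T^{\mu}_t)_{t>0}$, so by Remark~\ref{equivconinv}(ii), $\int_{\R^d}T^{\mu}_s(h\wedge N)\,d\mu=\int_{\R^d}(h\wedge N)\,d\mu$ for every $s>0$; hence $\int_{\R^d}(h\wedge N)\,d\mu=0$, so $h\wedge N=0$ $\mu$-a.e., and letting $N\to\infty$ gives $h=0$ $\mu$-a.e., contradicting $h>0$ $\mu$-a.e. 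The main obstacle is exactly the point flagged above: since $h=G^{\mu}g\notin L^1(\R^d,\mu)$, one has to route the argument through the truncations $h\wedge N$ and the $\alpha\downarrow 0$ limit of the resolvent estimate rather than applying $T^{\mu}_s$ to $h$ directly; once that is set up, only routine measure-theoretic bookkeeping remains. Note that conservativeness of $(T^{\mu}_t)_{t>0}$, although equivalent to invariance of $\mu$ by Lemma~\ref{equicondi32}(iii), is used only through sub-Markovianity ($T^{\mu}_s 1\le 1$).
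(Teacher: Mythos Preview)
Your argument is correct, but it takes a genuinely different route from the paper. The paper's proof is a two-liner: from conservativeness one has $T^{\mu}_t 1_{\R^d}=1$ $\mu$-a.e., hence $\alpha G^{\mu}_{\alpha}1_{\R^d}=1$ for all $\alpha>0$, so $G^{\mu}1_{\R^d}=\infty$ $\mu$-a.e.; since $\mu$ is finite, $1_{\R^d}\in L^1(\R^d,\mu)$, and a general fact about potential operators (cited as \cite[Remark~3(a)]{GT2}) then rules out transience, whence recurrence by Proposition~\ref{strictirreducibilityprop}. In other words, the paper exhibits one specific nonnegative $L^1$-function with infinite potential and appeals to an external black box.

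You instead start from an arbitrary $g>0$ with finite potential $h=G^{\mu}g$ and derive a contradiction directly from \emph{invariance} of $\mu$ (not merely conservativeness), via the supermartingale-type bound $T^{\mu}_s(h\wedge N)\le w_s=\int_s^{\infty}T^{\mu}_t g\,dt$ and dominated convergence. Your proof is fully self-contained (no reference to \cite{GT2} needed) and in fact essentially \emph{proves} the content of that external remark in this special situation, at the cost of being longer and of the technical detour through truncations and the resolvent limit $\alpha\downarrow 0$. The paper's route is shorter but depends on prior work; yours is more elementary and transparent about where finiteness of $\mu$ and invariance actually enter.
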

\begin{proof}
Since $T^{\mu}_t 1_{\mathbb{R}^d} = 1$ $\mu$-a.e. for all $t>0$, we get $\alpha G^{\mu}_{\alpha} 1_{\mathbb{R}^d} = 1$ $\mu$-a.e. for all $\alpha>0$ and therefore, $G^{\mu} 1_{\mathbb{R}^d} = \infty$ $\mu$-a.e. Since $\mu$ is finite, $1_{\mathbb{R}^d}\in L^1(\R^d,\mu)$. Then $(T^{\mu}_t)_{t>0}$ is not transient by \cite[Remark 3(a)]{GT2}, hence $(T^{\mu}_t)_{t>0}$ is recurrent by Proposition \ref{strictirreducibilityprop}.
\end{proof}

\begin{theo}\label{theo1.12}
Let $\mu=\rho dx$ and $\widetilde{\mu}=\widetilde{\rho} dx$ be infinitesimally invariant measures for $(L, C_0^{\infty}(\R^d))$ such that $\rho$, $\widetilde{\rho} \in H^{1,p}_{loc}(\R^d) \cap C(\R^d)$, $\rho(x)$, $\widetilde{\rho}(x)>0$ for all $x \in \R^d$ and $\frac{\rho}{\widetilde{\rho}} \in \mathcal{B}_b(\R^d)$.
Then it holds
$$
T'^{, \widetilde{\mu}}_t \left(\frac{\rho}{\widetilde{\rho}}\right) \leq \frac{\rho}{\widetilde{\rho}}, \quad \text{$\mu$-a.e.} \; \text{ for all } t>0.
$$
If additionally $\mu$ is an invariant measure for $(T^{\mu}_t)_{t>0}$, then 
$$
T'^{, \widetilde{\mu}}_t \left(\frac{\rho}{\widetilde{\rho}}\right) = \frac{\rho}{\widetilde{\rho}}, \quad \text{$\mu$-a.e.} \; \text{ for all } t>0.
$$

\end{theo}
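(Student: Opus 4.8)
The plan is to reduce everything to the sub-invariance (resp.\ invariance) of $\mu$ for its \emph{own} semigroup $(T^\mu_t)_{t>0}$ via the elementary identity $\psi\, d\widetilde{\mu} = \rho\,dx = d\mu$, where I write $\psi := \rho/\widetilde{\rho}$. By hypothesis $\psi\in\mathcal{B}_b(\R^d)$ is bounded and strictly positive, so $\psi\in L^\infty(\R^d,\widetilde{\mu})$ and $T'^{,\widetilde{\mu}}_t\psi$ is a well-defined nonnegative element of $L^\infty(\R^d,\widetilde{\mu})$; boundedness of $\psi$ is precisely what is needed here, since $\mu=\psi\,\widetilde{\mu}$ may be infinite. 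Fix $t>0$ and a bounded Borel set $B$. Using the $L^1$--$L^\infty$ duality between $(T^{\widetilde{\mu}}_t)_{t>0}$ and $(T'^{,\widetilde{\mu}}_t)_{t>0}$ (which follows from their $L^2(\R^d,\widetilde{\mu})$-adjointness recalled in Section \ref{section1} by a standard truncation argument), together with $\psi\, d\widetilde{\mu}=d\mu$, I would write
$$
\int_B T'^{,\widetilde{\mu}}_t\psi\; d\widetilde{\mu}
= \int_{\R^d} \big(T^{\widetilde{\mu}}_t 1_B\big)\,\psi\; d\widetilde{\mu}
= \int_{\R^d} T^{\widetilde{\mu}}_t 1_B\; d\mu .
$$
Since $1_B\in L^1(\R^d,\mu)\cap L^1(\R^d,\widetilde{\mu})$ (both measures are locally finite), Theorem \ref{pathuniquenesshunt} gives $T^{\widetilde{\mu}}_t 1_B = T^{\mu}_t 1_B$ a.e.\ on $\R^d$, so the right-hand side equals $\int_{\R^d} T^{\mu}_t 1_B\, d\mu$. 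Because $(T^\mu_t)_{t>0}$ is a contraction on $L^1(\R^d,\mu)$ and $T^\mu_t 1_B\ge 0$, this is $\le \int_{\R^d} 1_B\, d\mu = \mu(B) = \int_B \psi\, d\widetilde{\mu}$. Hence $\int_B\big(T'^{,\widetilde{\mu}}_t\psi-\psi\big)\,d\widetilde{\mu}\le 0$ for every bounded Borel $B$; choosing $B=B_n\cap\{T'^{,\widetilde{\mu}}_t\psi>\psi\}$ and letting $n\to\infty$ yields $T'^{,\widetilde{\mu}}_t\psi\le\psi$ $\widetilde{\mu}$-a.e., hence $\mu$-a.e.\ since $\mu$ and $\widetilde{\mu}$ are both equivalent to Lebesgue measure ($\rho,\widetilde{\rho}$ being continuous and strictly positive). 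This proves the first assertion.

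For the second assertion, suppose in addition that $\mu$ is an invariant measure for $(T^\mu_t)_{t>0}$. By Remark \ref{equivconinv}(ii) this is equivalent to $\int_{\R^d} T^\mu_t f\, d\mu = \int_{\R^d} f\, d\mu$ for all $f\in L^1(\R^d,\mu)$, so the single inequality in the chain above becomes an equality and one obtains $\int_B T'^{,\widetilde{\mu}}_t\psi\, d\widetilde{\mu} = \int_B \psi\, d\widetilde{\mu}$ for every bounded Borel $B$; therefore $T'^{,\widetilde{\mu}}_t\psi = \psi$ $\widetilde{\mu}$-a.e., hence $\mu$-a.e., for every $t>0$.

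The argument is short, and I expect the only genuinely delicate point to be the rigorous justification of the $L^1$--$L^\infty$ duality $\int (T^{\widetilde{\mu}}_t f)\, g\, d\widetilde{\mu} = \int f\,(T'^{,\widetilde{\mu}}_t g)\, d\widetilde{\mu}$ for $f\in L^1(\R^d,\widetilde{\mu})$ and $g\in L^\infty(\R^d,\widetilde{\mu})$: this must be bootstrapped from the $L^2$-adjointness by approximating $g$ with $g\,1_{B_n}$ and $f$ by truncations, using that both semigroups are sub-Markovian contractions on every $L^r(\R^d,\widetilde{\mu})$. The passage from the family of integral inequalities over bounded Borel sets to the pointwise $\mu$-a.e.\ inequality is routine, as is the change of reference measure via Theorem \ref{pathuniquenesshunt}; indeed, recognizing that $\psi\, d\widetilde{\mu}=d\mu$ combined with that theorem collapses the whole statement to the trivial facts that $(T^\mu_t)_{t>0}$ is $L^1(\R^d,\mu)$-contractive (resp.\ $\mu$-invariant) is the conceptual crux.
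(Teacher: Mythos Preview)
Your argument is correct and rests on the same core ingredient as the paper's proof, namely Theorem \ref{pathuniquenesshunt} together with the identity $\psi\,d\widetilde{\mu}=d\mu$, but the packaging differs in a way worth noting. The paper tests against $g\in C_0^\infty(\R^d)$ and keeps the localization \emph{inside} the co-semigroup: it shows the pointwise identity
\[
T'^{,\widetilde{\mu}}_t\!\big(1_{B_n}\psi\big)=\psi\cdot T'^{,\mu}_t 1_{B_n}\quad\mu\text{-a.e.},
\]
and then lets $n\to\infty$, invoking sub-Markovianity (resp.\ conservativeness, via Lemma \ref{equicondi32}(i)) of $(T'^{,\mu}_t)_{t>0}$. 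Your route instead localizes only the test function $1_B$ and appeals to $L^1(\R^d,\mu)$-contractivity (resp.\ $\mu$-invariance) of $(T^\mu_t)_{t>0}$; these are of course dual formulations of the same fact. The paper's version buys two things: first, because $1_{B_n}\psi$ and $g$ both lie in $L^1(\R^d,\widetilde{\mu})_b\subset L^2(\R^d,\widetilde{\mu})$, only the $L^2$-adjointness stated in Section \ref{section1} is needed, so the ``delicate point'' you flag about the $L^1$--$L^\infty$ duality simply does not arise; second, it yields the sharper intermediate formula $T'^{,\widetilde{\mu}}_t\psi=\psi\cdot T'^{,\mu}_t 1_{\R^d}$, which explains the inequality/equality in one stroke. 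Your approach is a shade more conceptual in phrasing everything as sub-invariance of $\mu$ for $(T^\mu_t)_{t>0}$, but requires the extra (routine) bootstrapping step you describe.
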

\begin{proof}
Let $g \in C_0^{\infty}(\R^d)$ and $t>0$. Using Theorem \ref{pathuniquenesshunt},  $T^{\mu}_t g = T^{\widetilde{\mu}}_t g$, a.e. Thus, for $n \in \N$,
\begin{eqnarray*}
\int_{\R^d} T'^{, \mu}_t 1_{B_n} \cdot g d\mu = \int_{\R^d} 1_{B_n} T^{\mu}_t g \cdot \rho dx  = \int_{\R^d}  1_{B_n} \frac{\rho}{\widetilde{\rho}} \cdot T^{\widetilde{\mu}}_t g \,d\widetilde{\mu}  = \int_{\R^d} \frac{\widetilde{\rho}}{\rho} \cdot T'^{, \widetilde{\mu}}_t \left(1_{B_n} \frac{\rho}{\widetilde{\rho}}\right)  \cdot g
d\mu.
\end{eqnarray*}
Consequently, 
$$
T'^{, \widetilde{\mu}}_t\left(1_{B_n} \frac{\rho}{\widetilde{\rho}}\right)=\frac{\rho}{\widetilde{\rho}} \cdot T'^{, \mu}_t 1_{B_n}, \quad \text{$\mu$-a.e.}
$$
$(T'^{, \mu}_t)_{t>0}$ is sub-Markovian and if $\mu$ is an invariant measure for $(T^{\mu}_t)_{t>0}$, then $(T'^{, \mu}_t)_{t>0}$ is conservative by Lemma \ref{equicondi32}(i). Therefore, letting $n \rightarrow \infty$, the assertion follows.
\end{proof}

\begin{theo} \label{preinvlemma}
Assume that $(T^{\mu}_t)_{t>0}$ is recurrent. Then $\mu$ is the unique infinitesimally invariant measure for $(L, C_0^{\infty}(\R^d))$ up to a multiplicative constant, i.e. if $\mu_1$ is an infinitesimally invariant measure for $(L, C_0^{\infty}(\R^d))$, then there exists a constant $c>0$ such that $\mu_1= c \mu$.
\end{theo}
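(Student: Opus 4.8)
The plan is to reduce the assertion to the fact that a bounded excessive function for a recurrent semigroup must be constant. Let $\mu_1=\rho_1\,dx$ be an arbitrary infinitesimally invariant measure for $(L, C_0^{\infty}(\R^d))$; by Remark \ref{equivconinv}(i) we may assume $\rho_1\in H^{1,p}_{loc}(\R^d)\cap C(\R^d)$ with $\rho_1>0$ everywhere. First I would introduce the auxiliary measure $\mu_2:=\mu+\mu_1=\rho_2\,dx$, where $\rho_2:=\rho+\rho_1$. Then $\mu_2$ is a positive locally finite measure, $\rho_2\in H^{1,p}_{loc}(\R^d)\cap C(\R^d)$ with $\rho_2>0$, one has $Lf\in L^1(\R^d,\mu_2)$ for all $f\in C_0^{\infty}(\R^d)$ (since $Lf\in L^1(\R^d,\mu)\cap L^1(\R^d,\mu_1)$), and $\int_{\R^d}Lf\,d\mu_2=\int_{\R^d}Lf\,d\mu+\int_{\R^d}Lf\,d\mu_1=0$, so $\mu_2$ is again infinitesimally invariant for $(L, C_0^{\infty}(\R^d))$. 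Moreover $(L'^{,\mu_2}, C_0^{\infty}(\R^d))$ again satisfies {\bf (H)} (indeed {\bf (H$^{\prime}$)}, with $\mu_2$ as associated infinitesimally invariant measure), so the entire framework of Sections \ref{section1}--\ref{invrec} and the Hunt process construction of \cite[Theorem 3.12]{LT18} apply to it as well. The crucial gain from this choice is that $h:=\rho/\rho_2=\rho/(\rho+\rho_1)$ satisfies $0\le h\le 1$, hence $h\in\mathcal{B}_b(\R^d)$.

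Next I would apply Theorem \ref{theo1.12} with $\widetilde{\mu}=\mu_2$. By Corollary \ref{existinvariancemea}, recurrence of $(T^{\mu}_t)_{t>0}$ implies that $\mu$ is an invariant measure for $(T^{\mu}_t)_{t>0}$; together with $h\in\mathcal{B}_b(\R^d)$ this makes the additional hypothesis of Theorem \ref{theo1.12} available, and we obtain
$$T'^{,\mu_2}_t h = h,\quad \mu\text{-a.e.},\ \text{ for all } t>0.$$
Thus $h$ is a bounded, nonnegative, $(T'^{,\mu_2}_t)_{t>0}$-invariant function, in particular excessive for $(T'^{,\mu_2}_t)_{t>0}$.

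Then I would transfer recurrence to the dual. By Theorem \ref{pathuniquenesshunt}, $(T^{\mu_2}_t)_{t>0}$ is recurrent because $(T^{\mu}_t)_{t>0}$ is, and Theorem \ref{cosemirec} then yields recurrence of $(T'^{,\mu_2}_t)_{t>0}$. Since $(T'^{,\mu_2}_t)_{t>0}$ admits a regularized version which is the transition semigroup of the (recurrent) Hunt process associated with $(L'^{,\mu_2}, C_0^{\infty}(\R^d))$, the property of excessive functions \cite[Proposition 11(b)]{GT2} forces the bounded excessive function $h$ to be constant $\mu$-a.e., say $h=a$ $\mu$-a.e. Finally, since $\rho$ and $\rho_2$ are continuous and strictly positive, $\mu=\rho\,dx$ has full topological support, so the continuous function $h=\rho/\rho_2$ equals $a$ everywhere on $\R^d$, and $a\in(0,1)$ because $0<\rho<\rho_2$. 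Hence $\rho=a\rho_2$, so $\rho_1=\rho_2-\rho=c\rho$ with $c:=(1-a)/a>0$, i.e. $\mu_1=c\mu$, which is the claim.

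I expect the main obstacle to be the third step: verifying rigorously that the dual semigroup $(T'^{,\mu_2}_t)_{t>0}$ falls within the Hunt process framework of \cite{LT18} (so that \cite[Proposition 11(b)]{GT2} may legitimately be invoked), and that $(T'^{,\mu_2}_t)_{t>0}$-invariance of the bounded function $h$ is precisely the notion of excessiveness required there. The remaining steps --- constructing $\mu_2$, invoking Theorem \ref{theo1.12}, and upgrading a $\mu$-a.e.\ identity between continuous functions to an everywhere identity --- are routine.
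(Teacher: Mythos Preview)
Your proposal is correct and follows essentially the same approach as the paper: form the sum measure $\mu_2=\mu+\mu_1$, use Corollary \ref{existinvariancemea} and Theorem \ref{theo1.12} to show $\rho/\rho_2$ is $(T'^{,\mu_2}_t)_{t>0}$-invariant, transfer recurrence to the dual via Theorems \ref{pathuniquenesshunt} and \ref{cosemirec}, and then invoke \cite[Proposition 11(b)]{GT2}. The paper handles your anticipated obstacle by citing \cite[Theorem 3.12, Corollary 4.8(i), Proposition 3.10(ii)]{LT18} to justify that the Hunt process framework applies to $(T'^{,\mu_2}_t)_{t>0}$.
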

\begin{proof}
If $\mu_1$ is an infinitesimally invariant measure for $(L, C_0^{\infty}(\R^d))$, then by \cite[Corollary 2.10, 2.11]{BKR2}, there exists $\rho_1 \in H^{1,p}_{loc}(\R^d) \cap C(\R^d)$
such that $\rho_1(x)>0$ for all $x \in \R^d$ and $\mu_1= \rho_1 dx$. Let $\widetilde{\rho}:=\rho+\rho_1$, $\widetilde{\mu}:= \widetilde{\rho} dx$. Then by linearity, $\widetilde{\mu}$ is an infinitesimally invariant measure for $(L, C_0^{\infty}(\R^d))$. Since $(T^{\mu}_t)_{t>0}$ is recurrent,  $\mu$ is an invariant measure for $(T^{\mu}_t)_{t>0}$ by Corollary \ref{existinvariancemea}. Thus by Theorem \ref{theo1.12}, 
$$
T'^{, \widetilde{\mu}}_t \left(\frac{\rho}{\widetilde{\rho}}\right) = \frac{\rho}{\widetilde{\rho}}, \quad \text{$\mu$-a.e.} \; \text{ for all } t>0.
$$
Note that $(T^{\widetilde{\mu}}_t)_{t>0}$ is recurrent by Theorem \ref{pathuniquenesshunt}, so that $(T'^{, \widetilde{\mu}}_t)_{t>0}$ is also recurrent by Theorem \ref{cosemirec}.
 By \cite[Proposition 11(b)]{GT2} and \cite[Theorem 3.12, Corollary 4.8(i) and Proposition 3.10(ii)]{LT18}, there exists a constant $\widetilde{c} \in (0,1)$ such that  $\frac{\rho}{\rho+\rho_1} = \widetilde{c}$. Therefore, we obtain $\rho_1 = \frac{1-\widetilde{c}}{\widetilde{c}} \rho$, as desired.
\end{proof}

\begin{lem} \label{extc0semi}
Let $\widetilde{\mu}=\widetilde{\rho} dx$ with $\widetilde{\rho} \in L^1_{loc}(\R^d)$ be an invariant measure for $(T^{\mu}_t)_{t>0}$. Then $(T^{\mu}_t)_{t>0}|_{C_0(\R^d)}$ can  uniquely be extended to a sub-Markovian $C_0$-semigroup of contractions $(\widetilde{T}_t)_{t>0}$ on $L^1(\R^d, \widetilde{\mu})$ and $T^{\mu}_t f  = \widetilde{T}_t f$ $\widetilde{\mu}$-a.e. for any $f \in L^1(\R^d, \mu) \cap L^1(\R^d, \widetilde{\mu})$, $t>0$. If $(\widetilde{L}, D(\widetilde{L}))$ denotes the generator of $(\widetilde{T}_t)_{t>0}$ on $L^1(\R^d, \widetilde{\mu})$, then $C_0^{\infty}(\R^d)\subset D(\widetilde{L})$ and $Lg = \widetilde{L}g$ for all $g \in C_0^{\infty}(\R^d)$. In particular, $\widetilde{\mu}$ is an infinitesimally invariant measure for $(L, C_0^{\infty}(\R^d))$ and $(T^{\widetilde{\mu}}_t)_{t>0} = (\widetilde{T}_t)_{t>0}$.
\end{lem}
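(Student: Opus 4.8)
The plan is to realise $(\widetilde{T}_t)_{t>0}$ as the action on $L^1(\R^d,\widetilde{\mu})$ of the transition kernels $(P^{\mu}_t)_{t>0}$ of the Hunt process $\M$, and then to read off all the asserted properties. Since $\widetilde{\mu}\ll\mu$, for $f\in\mathcal{B}_b(\R^d)$ the Borel function $P^{\mu}_t f$ is a $\widetilde{\mu}$-version of $T^{\mu}_t f$, and the invariance $\int_{\R^d}T^{\mu}_t 1_A\,d\widetilde{\mu}=\widetilde{\mu}(A)$ extends, by linearity and monotone convergence through the kernel $P^{\mu}_t$, to $\int_{\R^d}P^{\mu}_t f\,d\widetilde{\mu}=\int_{\R^d}f\,d\widetilde{\mu}$ for all bounded Borel $f\geq 0$. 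Hence for $f\in\mathcal{B}_b(\R^d)\cap L^1(\R^d,\widetilde{\mu})$ we get $\int_{\R^d}|P^{\mu}_t f|\,d\widetilde{\mu}\leq\int_{\R^d}P^{\mu}_t|f|\,d\widetilde{\mu}=\int_{\R^d}|f|\,d\widetilde{\mu}$, so $P^{\mu}_t$ is an $L^1(\R^d,\widetilde{\mu})$-contraction on the dense subspace $\mathcal{B}_b(\R^d)\cap L^1(\R^d,\widetilde{\mu})\supset C_0(\R^d)$; it therefore extends uniquely to a contraction $\widetilde{T}_t$ on $L^1(\R^d,\widetilde{\mu})$, given on all of $L^1(\R^d,\widetilde{\mu})$ by the kernel $P^{\mu}_t$ (the kernel action being $\widetilde{\mu}$-a.e. finite, again by invariance and Tonelli). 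The semigroup property is inherited from Chapman--Kolmogorov on $\mathcal{B}_b(\R^d)$ together with density, and sub-Markovianity and positivity pass to the limit from $C_0(\R^d)$-approximations of indicators. Along the way one records $\widetilde{T}_t f=P^{\mu}_t f$ $\widetilde{\mu}$-a.e. for $f\in L^1(\R^d,\widetilde{\mu})$, in particular $\widetilde{T}_t$ extends $T^{\mu}_t|_{C_0(\R^d)}$, and, since $P^{\mu}_t$ is also a $\mu$-version of $T^{\mu}_t$, that $\widetilde{T}_t f=T^{\mu}_t f$ $\widetilde{\mu}$-a.e. for every $f\in L^1(\R^d,\mu)\cap L^1(\R^d,\widetilde{\mu})$ (first for bounded such $f$, then by truncation).

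The step I expect to be the main obstacle is the strong continuity of $(\widetilde{T}_t)_{t>0}$, since $\widetilde{\mu}$ may be infinite, so dominated convergence is not directly available. Here I would leave the purely functional-analytic setting and use the process: for $f\in C_0(\R^d)$ with $f\geq 0$, right continuity of the sample paths of $\M$ (with $X_0=x$ $\mathbb{P}_x$-a.s.) gives $P^{\mu}_t f(x)=\mathbb{E}_x[f(X_t)]\to f(x)$ pointwise as $t\downarrow 0$, while invariance gives $\int_{\R^d}P^{\mu}_t f\,d\widetilde{\mu}=\int_{\R^d}f\,d\widetilde{\mu}<\infty$ for every $t$; Scheff\'{e}'s lemma then upgrades this to $P^{\mu}_t f\to f$ in $L^1(\R^d,\widetilde{\mu})$. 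Writing a general $f\in C_0(\R^d)$ as $f^+-f^-$ and using $\|\widetilde{T}_t\|\leq 1$ extends the convergence $\widetilde{T}_t h\to h$ in $L^1(\R^d,\widetilde{\mu})$ to all $h\in L^1(\R^d,\widetilde{\mu})$. Thus $(\widetilde{T}_t)_{t>0}$ is a sub-Markovian $C_0$-semigroup of contractions on $L^1(\R^d,\widetilde{\mu})$, and letting $t\downarrow 0$ in $\int_{\R^d}\widetilde{T}_t f\,d\widetilde{\mu}=\int_{\R^d}f\,d\widetilde{\mu}$ (valid on $C_0(\R^d)$, hence by density on $L^1(\R^d,\widetilde{\mu})$) shows that $\widetilde{\mu}$ is $(\widetilde{T}_t)_{t>0}$-invariant.

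For the generator, fix $g\in C_0^{\infty}(\R^d)$. Then $Lg\in L^1(\R^d,\widetilde{\mu})$ by Definition \ref{definvaran2}, and also $Lg\in L^1(\R^d,\mu)$, since $Lg$ lies in $L^p(\R^d,dx)$ with compact support and $\rho$ is locally bounded. As $g\in D(\overline{L}^{\mu})$ with $\overline{L}^{\mu}g=Lg$, the identity $T^{\mu}_t g-g=\int_0^t T^{\mu}_s(Lg)\,ds$ holds as a Bochner integral in $L^1(\R^d,\mu)$; representing $T^{\mu}_s(Lg)$ and $\widetilde{T}_s(Lg)$ by the one common kernel function $P^{\mu}_s(Lg)$ (legitimate because $Lg\in L^1(\R^d,\mu)\cap L^1(\R^d,\widetilde{\mu})$) and applying Fubini, the same identity becomes $\widetilde{T}_t g-g=\int_0^t\widetilde{T}_s(Lg)\,ds$ in $L^1(\R^d,\widetilde{\mu})$. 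Dividing by $t$ and letting $t\downarrow 0$, the right-hand side converges to $Lg$ in $L^1(\R^d,\widetilde{\mu})$ by the strong continuity established above, so $g\in D(\widetilde{L})$ and $\widetilde{L}g=Lg$. Integrating $\widetilde{T}_t g-g$ against $\widetilde{\mu}$, dividing by $t$ and letting $t\downarrow 0$ gives $\int_{\R^d}Lg\,d\widetilde{\mu}=\int_{\R^d}\widetilde{L}g\,d\widetilde{\mu}=0$ for every $g\in C_0^{\infty}(\R^d)$, i.e. $\widetilde{\mu}$ is an infinitesimally invariant measure for $(L,C_0^{\infty}(\R^d))$.

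It remains to identify $(\widetilde{T}_t)_{t>0}$ with $(T^{\widetilde{\mu}}_t)_{t>0}$. Since $\widetilde{\mu}$ is now known to be infinitesimally invariant for $(L,C_0^{\infty}(\R^d))$, Remark \ref{equivconinv}(i) upgrades the regularity of $\widetilde{\rho}$ to $H^{1,p}_{loc}(\R^d)\cap C(\R^d)$ with $\widetilde{\rho}>0$, so $(T^{\widetilde{\mu}}_t)_{t>0}$ is defined and Theorem \ref{pathuniquenesshunt} applies with $\mu_1=\mu$ and $\mu_2=\widetilde{\mu}$, giving $T^{\mu}_t f=T^{\widetilde{\mu}}_t f$ Lebesgue-a.e., hence $\widetilde{\mu}$-a.e., for every $f\in L^1(\R^d,\mu)\cap L^1(\R^d,\widetilde{\mu})$. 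Combined with $\widetilde{T}_t f=T^{\mu}_t f$ $\widetilde{\mu}$-a.e. on the same space, this shows $\widetilde{T}_t=T^{\widetilde{\mu}}_t$ on the dense subspace $L^1(\R^d,\mu)\cap L^1(\R^d,\widetilde{\mu})$ of $L^1(\R^d,\widetilde{\mu})$, hence on all of $L^1(\R^d,\widetilde{\mu})$ by boundedness, which is the asserted equality $(T^{\widetilde{\mu}}_t)_{t>0}=(\widetilde{T}_t)_{t>0}$. I would stress that this last step cannot be replaced by an appeal to $L^1$-uniqueness of $(L,C_0^{\infty}(\R^d))$, which in general fails; it is precisely the comparison of the two constructions through the localised symmetric Dirichlet form data underlying Theorem \ref{pathuniquenesshunt} that makes it go through.
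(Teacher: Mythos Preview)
Your proof is correct and covers all the claims, but it takes a genuinely different route from the paper's own argument. The paper stays entirely within the $L^r(\R^d,\mu)$-functional calculus: it never invokes the regularized kernels $(P^{\mu}_t)_{t>0}$ or the Hunt process $\M$, but instead defines $\widetilde{T}_t f$ as the $L^1(\R^d,\widetilde{\mu})$-limit of $T^{\mu}_t f_n$ along $C_0(\R^d)$-approximants $f_n$, and then verifies consistency with $T^{\mu}_t$ on $L^1(\R^d,\mu)\cap L^1(\R^d,\widetilde{\mu})$ via mollification. For strong continuity the paper does \emph{not} use Scheff\'e's lemma; instead it exploits directly that $g\in C_0^{\infty}(\R^d)\subset D(\overline{L}^{\mu})$ to write $|\widetilde{T}_t g-g|\leq\int_0^t T^{\mu}_s|Lg|\,ds$ $\widetilde{\mu}$-a.e., then integrates against $\widetilde{\mu}$ using Fubini and the contraction property to get the quantitative bound $\|\widetilde{T}_t g-g\|_{L^1(\R^d,\widetilde{\mu})}\leq t\,\|Lg\|_{L^1(\R^d,\widetilde{\mu})}$, from which strong continuity follows by density.

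What each approach buys: your kernel-based argument is conceptually clean---having the pointwise Borel version $P^{\mu}_t f$ at hand makes the consistency $\widetilde{T}_t f=T^{\mu}_t f$ almost automatic, and the combination of path right-continuity with Scheff\'e is an elegant way to get $L^1$-strong continuity in one stroke for nonnegative $C_0$-functions. The price is that you import the Hunt process machinery from \cite{LT18} (which the paper only deploys later, in Section~\ref{4.2appli}). The paper's proof is more self-contained in that it uses only the $L^1$-semigroup data of \cite{St99}, and it yields a rate of convergence; on the other hand it needs a separate mollifier argument to pin down consistency on $L^1(\R^d,\mu)\cap L^1(\R^d,\widetilde{\mu})$. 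The final identification $(\widetilde{T}_t)_{t>0}=(T^{\widetilde{\mu}}_t)_{t>0}$ via Theorem~\ref{pathuniquenesshunt} is handled the same way in both proofs, and your closing remark that $L^1$-uniqueness cannot substitute here is well taken.
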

\begin{proof}
Let $t>0$. If $h$ is a positive simple function, then by linearity
 $$
\int_{\R^d} T^{\mu}_t h \, d\widetilde{\mu} =  \int_{\R^d} h \, d\widetilde{\mu}.
$$
Let $g \in C_0(\R^d)$. Splitting $g= g^+ - g^-$ and using  monotone approximations through positive simple functions for $g^+$ and $g^-$, we get
\begin{equation} \label{invaproperty}
\int_{\R^d} T^{\mu}_t g  d\widetilde{\mu} = \int_{\R^d} g d\widetilde{\mu}.
\end{equation}
Moreover, since $(T^{\mu}_t)_{t>0}$ is positivity preserving and $\widetilde{\mu} \ll \mu$,
\begin{eqnarray} 
\int_{\R^d} |T^{\mu}_t g| d\widetilde{\mu} &=& \int_{\R^d} |T^{\mu}_t g^+ - T^{\mu}_t g^-| d\widetilde{\mu} \leq \int_{\R^d} |T^{\mu}_t g^+|  + |T^{\mu}_t g^-|  d\widetilde{\mu} \nonumber \\
&=& \int_{\R^d} T^{\mu}_t g^+  + T^{\mu}_t g^-  d\widetilde{\mu}  = \int_{\R^d} T^{\mu}_t |g| d\widetilde{\mu} = \int_{\R^d} |g| d\widetilde{\mu}. \label{contprop}
\end{eqnarray}
Let $f \in L^1(\R^d, \widetilde{\mu})$. Then there exists a sequence of functions $(f_n)_{n \geq 1} \subset  C_0(\R^d)$  such that $\lim_{n \rightarrow \infty }f_n = f$ in $L^1(\R^d, \widetilde{\mu})$. By \eqref{contprop}, $\widetilde{T}_t f := \lim_{n \rightarrow \infty} T^{\mu}_t f_n$ in $L^1(\R^d, \widetilde{\mu})$, $t>0$, is well-defined, independently of choice of $(f_n)_{n \geq 1}$. Moreover, by \eqref{invaproperty}, it holds
\begin{equation} \label{invnewsem}
\int_{\R^d} \widetilde{T}_t f d\widetilde{\mu} =\int_{\R^d}f d\widetilde{\mu},
\end{equation}
and by \eqref{contprop}, it follows
\begin{equation} \label{contextf}
 \int_{\R^d} |\widetilde{T}_t f| d\widetilde{\mu} \leq \int_{\R^d}|f| d\widetilde{\mu}.
\end{equation}
Assume $h \in L^1(\R^d, \mu) \cap L^1(\R^d, \widetilde{\mu}) \cap \mathcal{B}_b(\R^d)_0$ and let $(\eta_n)_{n \geq 1}$ be a standard mollifier. Then $h * \eta_n \in C_0^{\infty}(\R^d)$ for any $n \in \N$, and it follows from Lebesgue's Theorem that $\lim_{n \rightarrow \infty} h * \eta_n = h$ in $L^1(\R^d, \mu)$ and in $L^1(\R^d, \widetilde{\mu})$. Thus, there exists a subsequence $(n_k)_{k \geq 1}$ such that
$$
\lim_{k \rightarrow \infty} T^{\mu}_{t} (h * \eta_{n_k} )  = T^{\mu}_t h \;\;\;  \text{$\mu$-a.e.} \quad  \lim_{k \rightarrow \infty} \widetilde{T}_{t} (h * \eta_{n_k} )  = \widetilde{T}_t h \;\;  \text{$\widetilde{\mu}$-a.e.} 
$$
Since $\widetilde{\mu} \ll \mu$ and $T^{\mu}_{t} (h * \eta_{n_k} )=\widetilde{T}_{t} (h * \eta_{n_k} )$ \,$\widetilde{\mu}$-a.e., we obtain $T^{\mu}_t h = \widetilde{T}_t h$ \,$\widetilde{\mu}$-a.e. If $f \in L^1(\R^d, \mu) \cap L^1(\R^d, \widetilde{\mu})$, we get by Lebesgue's Theorem, 
$$
\lim_{n \rightarrow \infty} (f\cdot 1_{\overline{B}_n} \wedge n \vee -n) = f \; \text{ in $L^1(\R^d, \mu)$ and $L^1(\R^d, \widetilde{\mu})$}.
$$
Exactly as above, it then follows
\begin{equation} \label{consistency1}
T^{\mu}_t f = \widetilde{T}_t f \,\; \text{ $\widetilde{\mu}$-a.e.}
\end{equation}
Let $g \in C_0(\R^d)$. Then $T^{\mu}_s g, g \in L^1(\R^d, \mu) \cap L^1(\R^d, \widetilde{\mu})$ and so by \eqref{consistency1} for $t,s>0$
$$
\widetilde{T}_{t+s} g = T^{\mu}_{t+s} g = T^{\mu}_t T^{\mu}_s g = \widetilde{T}_t T^{\mu}_s g  = \widetilde{T}_t \widetilde{T}_s g \; \text{ $\widetilde{\mu}$-a.e.  \; hence in $L^1(\R^d, \widetilde{\mu})$.} 
$$
Using \eqref{contextf}, the latter extends to $g \in L^1(\R^d, \widetilde{\mu})$. Now let $g \in C^{\infty}_0(\R^d)$. Then $\widetilde{\mu} \ll \mu$  and \eqref{consistency1} imply
$$
|\widetilde{T}_t g-g|= |T^{\mu}_t g - g | = \left|\int_0^t T^{\mu}_s L g ds \right| \leq \int_0^{t}  T^{\mu}_s |L g|\,ds \; \text{ $\widetilde{\mu}$-a.e.}
$$
and by Fubini's Theorem, \eqref{contextf} and \eqref{consistency1}, it follows
$$
\int_{\R^d} \int_0^{t} T^{\mu}_s |L g|\,ds d\widetilde{\mu} \leq \int_0^{t} \int_{\R^d} \widetilde{T}_s |Lg| d\widetilde{\mu} ds
\leq t \int_{\R^d }|Lg| d\widetilde{\mu}.
$$
Thus
\begin{equation*} 
\| \widetilde{T}_t g -g  \|_{L^1(\R^d, \widetilde{\mu})} \leq t \| L g \|_{L^1(\R^d, \widetilde{\mu})}
\end{equation*}
and so
\begin{equation} \label{stcontin}
 \lim_{t \rightarrow 0+} \widetilde{T}_t g  = g, \; \text{ in } L^1(\R^d, \widetilde{\mu}).
\end{equation}
Using \eqref{contextf} and the denseness of $C_0^{\infty}(\R^d)$ in $L^1(\R^d, \widetilde{\mu})$, \eqref{stcontin} extends to all $g \in L^1(\R^d, \widetilde{\mu})$. Therefore, we obtain that $(\widetilde{T}_t)_{t>0}$ is a $C_0$-semigroup of contractions on $L^1(\R^d, \widetilde{\mu})$ satisfying \eqref{consistency1}. The sub-Markovian property follows from \eqref{consistency1}, the sub-Markovian property of $(T^{\mu}_t)_{t>0}$, $\widetilde{\mu} \ll \mu$ and approximation. Since $Lg\in L^1(\R^d, \mu) \cap L^1(\R^d, \widetilde{\mu})$, it follows from \eqref{consistency1} that
\begin{equation*}
\frac{\widetilde{T}_t g-g}{t}  =\frac{T^{\mu}_t g-g}{t}  =\frac{1}{t}\int_0^t T^{\mu}_s L g ds= \frac{1}{t}\int_0^t \widetilde{T}_s L g ds, \quad \text{$\widetilde{\mu}$-a.e.}
\end{equation*}
Since the above right hand side converges to  $Lg$ in $L^1(\R^d, \widetilde{\mu}) $ as  $t \rightarrow 0+$, so does the left hand side. Hence $C_0^{\infty}(\R^d)\subset D(\widetilde{L})$, $\widetilde{L}g = Lg$ \,$\widetilde{\mu}$-a.e. and by \eqref{invnewsem}
$$
\int_{\R^d} Lg\, d\widetilde{\mu} = \lim_{t\to 0+}\int_{\R^d} \frac{\widetilde{T}_t g-g}{t}\, d\widetilde{\mu} =0.
$$
Therefore, $\widetilde{\mu}$ is an infinitesimally invariant measure for $(L, C_0^{\infty}(\R^d))$. Finally, since $T^{\mu}_t f  = T^{\widetilde{\mu}}_t f$ $\widetilde{\mu}$-a.e. for any $f \in L^1(\R^d, \mu) \cap L^1(\R^d, \widetilde{\mu})$ by Theorem \ref{pathuniquenesshunt}, it follows from \eqref{consistency1} that $(T^{\widetilde{\mu}}_t)_{t>0} = (\widetilde{T}_t)_{t>0}$.
\end{proof}

\begin{theo} \label{existuniquenessinv}
Assume that $(T^{\mu}_t)_{t>0}$ is recurrent. Then $\mu$ is the unique invariant measure for $(T^{\mu}_t)_{t>0}$ in the following sense. If $\widetilde{\mu}=\widetilde{\rho} dx$ with $\widetilde{\rho} \in L^1_{loc}(\R^d)$ is an invariant measure for $(T^{\mu}_t)_{t>0}$, then there exists a constant $c>0$ such that $\widetilde{\mu}= c \mu$.
\end{theo}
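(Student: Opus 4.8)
The plan is to deduce the statement from the uniqueness of infinitesimally invariant measures already obtained in Theorem \ref{preinvlemma}, after upgrading an arbitrary invariant measure of the semigroup to an infinitesimally invariant measure of the operator. Existence comes for free: since $(T^{\mu}_t)_{t>0}$ is recurrent, Corollary \ref{existinvariancemea} shows that $\mu$ itself is an invariant measure for $(T^{\mu}_t)_{t>0}$, so the class of invariant measures with $L^1_{loc}(\R^d)$-density under consideration is non-empty.

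For the uniqueness part, let $\widetilde{\mu} = \widetilde{\rho}\,dx$ with $\widetilde{\rho} \in L^1_{loc}(\R^d)$ be an invariant measure for $(T^{\mu}_t)_{t>0}$. First I would note that the absolute continuity requirement $\widetilde{\mu} \ll \mu$ contained in Definition \ref{definvaran2} holds automatically here: since $\rho \in H^{1,p}_{loc}(\R^d) \cap C(\R^d)$ is strictly positive, $\mu$ and Lebesgue measure are mutually absolutely continuous, and $\widetilde{\mu} = \widetilde{\rho}\,dx$ is trivially absolutely continuous with respect to Lebesgue measure, hence with respect to $\mu$. Therefore Lemma \ref{extc0semi} applies directly and yields that $\widetilde{\mu}$ is an infinitesimally invariant measure for $(L, C_0^{\infty}(\R^d))$ (it also identifies $(T^{\widetilde{\mu}}_t)_{t>0}$ with the extension of $(T^{\mu}_t)_{t>0}|_{C_0(\R^d)}$ to $L^1(\R^d, \widetilde{\mu})$, but this extra information is not needed for the present claim). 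By Remark \ref{equivconinv}(i) one moreover obtains that $\widetilde{\rho}$ enjoys the regularity of Remark \ref{equivconinv}(i) and is strictly positive, although this too is not strictly required.

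It then remains to invoke Theorem \ref{preinvlemma}: since $(T^{\mu}_t)_{t>0}$ is recurrent, every infinitesimally invariant measure for $(L, C_0^{\infty}(\R^d))$ is a constant multiple of $\mu$, so there is $c>0$ with $\widetilde{\mu} = c\mu$, which is the assertion. I expect no genuine obstacle at this stage; the mathematical substance has already been concentrated in Lemma \ref{extc0semi}, which transfers semigroup-level invariance to operator-level infinitesimal invariance by comparing the two $L^1$-generators along the common core $C_0^{\infty}(\R^d)$, and in Theorem \ref{preinvlemma}, whose proof in turn relies on Theorem \ref{theo1.12} and on the structure of bounded co-excessive functions under recurrence via \cite[Proposition 11(b)]{GT2}. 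The only point requiring a moment's care is to check that the $L^1_{loc}$-density hypothesis on $\widetilde{\rho}$ matches exactly the hypotheses of Lemma \ref{extc0semi} and Definition \ref{definvaran2} (in particular $\widetilde{\mu} \ll \mu$, which, as noted above, is automatic, whereas $Lf \in L^1(\R^d, \widetilde{\mu})$ for $f \in C_0^{\infty}(\R^d)$ is already built into the definition of an invariant measure).
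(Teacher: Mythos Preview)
Your proposal is correct and follows essentially the same approach as the paper's own proof: existence via Corollary \ref{existinvariancemea}, then Lemma \ref{extc0semi} to upgrade $\widetilde{\mu}$ to an infinitesimally invariant measure, and finally Theorem \ref{preinvlemma} for the uniqueness. The extra checks you spell out (such as $\widetilde{\mu}\ll\mu$) are harmless elaborations of hypotheses already built into Definition \ref{definvaran2}.
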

\begin{proof}
By Corollary \ref{existinvariancemea}, $\mu$ is an invariant measure for $(T^{\mu}_t)_{t>0}$. By Lemma  \ref{extc0semi}, $\widetilde{\mu}$ is an infinitesimally invariant measure for $(L, C_0^{\infty}(\R^d))$. The assertion hence  follows from Theorem \ref{preinvlemma}.
\end{proof}
\text{}\\
{\bf
From now on, the uniqueness of infinitesimally invariant measures and invariant measures are in the sense of Theorem \ref{preinvlemma} and Theorem \ref{existuniquenessinv}, respectively.
}

\begin{lem} \label{fininvlem} 
Assume that either (i) or (ii)  below holds.
\begin{itemize}
\item[(i)]
There exists a finite invariant measure $\widetilde{\mu}$ for $(T^{\mu}_t)_{t>0}$. 
\item[(ii)]
$(T^{\mu}_t)_{t>0}$ is conservative and there exist a finite infinitesimally invariant measure $\widetilde{\mu}$ for $(L,C_0^{\infty}(\R^d))$.
\end{itemize}
Then $(T^{\mu}_t)_{t>0}$ is recurrent. In particular, $\mu$ is finite and is the unique infinitesimally invariant measure for $(L, C_0^{\infty}(\R^d))$ and the unique invariant measure for $(T^{\mu}_t)_{t>0}$.
\end{lem}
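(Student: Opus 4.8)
The plan is to transfer the problem to the finite measure $\widetilde{\mu}$, show that the semigroup $(T^{\widetilde{\mu}}_t)_{t>0}$ built from $\widetilde{\mu}$ is recurrent, and then pull recurrence back to $(T^{\mu}_t)_{t>0}$ via Theorem \ref{pathuniquenesshunt}. The first step, common to both cases, is to observe that $\widetilde{\mu}$ is an infinitesimally invariant measure for $(L, C_0^{\infty}(\R^d))$: in case (ii) this is part of the hypothesis, and in case (i) it follows from Lemma \ref{extc0semi}. By Remark \ref{equivconinv}(i) we may then write $\widetilde{\mu}=\widetilde{\rho}\,dx$ with $\widetilde{\rho}\in H^{1,p}_{loc}(\R^d)\cap C(\R^d)$ and $\widetilde{\rho}>0$ everywhere. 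This is precisely the type of density with which Section \ref{section1} and the present section were set up, so all constructions and results above remain valid with $\widetilde{\mu}$ in place of $\mu$; in particular $(T^{\widetilde{\mu}}_t)_{t>0}$ and $(T'^{,\widetilde{\mu}}_t)_{t>0}$ are defined, and Proposition \ref{fininvariant}, Lemma \ref{equicondi32} and Theorem \ref{pathuniquenesshunt} may be invoked for $\widetilde{\mu}$.

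In case (i), $\widetilde{\mu}$ is by assumption an invariant measure for $(T^{\mu}_t)_{t>0}$, so Lemma \ref{extc0semi} gives $(T^{\widetilde{\mu}}_t)_{t>0}=(\widetilde{T}_t)_{t>0}$, and \eqref{invnewsem} together with Remark \ref{equivconinv}(ii) (applied with $\widetilde{\mu}$ in place of $\mu$) shows that $\widetilde{\mu}$ is an invariant measure for $(T^{\widetilde{\mu}}_t)_{t>0}$. Since $\widetilde{\mu}$ is finite, Proposition \ref{fininvariant}, applied with $\widetilde{\mu}$ in place of $\mu$, yields recurrence of $(T^{\widetilde{\mu}}_t)_{t>0}$. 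In case (ii), $(T^{\mu}_t)_{t>0}$ is conservative, hence so is $(T^{\widetilde{\mu}}_t)_{t>0}$ by Theorem \ref{pathuniquenesshunt}; as $\widetilde{\mu}$ is finite, Proposition \ref{fininvariant} in its conservativeness formulation (equivalently, via Lemma \ref{equicondi32}(iii) applied with $\widetilde{\mu}$), again yields recurrence of $(T^{\widetilde{\mu}}_t)_{t>0}$. In both cases a final application of Theorem \ref{pathuniquenesshunt} transfers recurrence from $(T^{\widetilde{\mu}}_t)_{t>0}$ back to $(T^{\mu}_t)_{t>0}$.

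It then remains to prove the ``in particular'' assertion. Since $(T^{\mu}_t)_{t>0}$ is recurrent, Theorem \ref{preinvlemma} tells us that every infinitesimally invariant measure for $(L, C_0^{\infty}(\R^d))$ is a constant multiple of $\mu$; applying this to the finite measure $\widetilde{\mu}$, we conclude that $\mu$ is finite and is the unique infinitesimally invariant measure for $(L, C_0^{\infty}(\R^d))$. By Theorem \ref{existuniquenessinv}, $\mu$ is also the unique invariant measure for $(T^{\mu}_t)_{t>0}$.

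The argument involves no hard analysis; the only point that needs care is the bookkeeping of the reference measure, that is, keeping track of which of the semigroups $(T^{\mu}_t)_{t>0}$, $(T^{\widetilde{\mu}}_t)_{t>0}$, $(\widetilde{T}_t)_{t>0}$ one is working with at each step, and verifying that the regularity of $\widetilde{\rho}$ indeed legitimizes the use of the earlier results with $\widetilde{\mu}$ as reference measure. Once this is in place, the two hypotheses collapse, through Proposition \ref{fininvariant} and Theorem \ref{pathuniquenesshunt}, to the already-established fact that for a \emph{finite} reference measure, invariance (equivalently, conservativeness) is equivalent to recurrence.
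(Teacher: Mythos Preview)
Your proof is correct and follows essentially the same approach as the paper's own proof: reduce to the finite reference measure $\widetilde{\mu}$, apply Proposition \ref{fininvariant} there, and transfer recurrence back via Theorem \ref{pathuniquenesshunt}, then invoke Theorems \ref{preinvlemma} and \ref{existuniquenessinv} for the final assertion. The only cosmetic difference is that in case (i) the paper obtains invariance of $\widetilde{\mu}$ for $(T^{\widetilde{\mu}}_t)_{t>0}$ directly from the last clause of Theorem \ref{pathuniquenesshunt}, whereas you route it through Lemma \ref{extc0semi} and \eqref{invnewsem}; both are valid.
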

\begin{proof}
Assume (i). By Lemma \ref{extc0semi}, $\widetilde{\mu}$ is an infinitesimally invariant measure for $(L, C_0^{\infty}(\R^d))$. By Theorem \ref{pathuniquenesshunt}, $T^{\mu}_t f = T^{\widetilde{\mu}}_t f$ for all $f \in L^1(\R^d, \mu) \cap L^1(\R^d, \widetilde{\mu})$ and $\widetilde{\mu}$ is a finite invariant measure for $(T^{\widetilde{\mu}}_t)_{t>0}$. By Proposition \ref{fininvariant}, $(T^{\widetilde{\mu}}_t)_{t>0}$ is recurrent, hence $(T^{\mu}_t)_{t>0}$ is recurrent by Theorem \ref{pathuniquenesshunt}. The rest of the assertion follows from Theorem \ref{preinvlemma}, Corollary \ref{existinvariancemea} and Theorem \ref{existuniquenessinv}.\\
Assume (ii). By Theorem \ref{pathuniquenesshunt}, $(T^{\widetilde{\mu}}_t)_{t>0}$ is conservative. Thus, $(T^{\widetilde{\mu}}_t)_{t>0}$ is recurrent by Proposition \ref{fininvariant}, hence $(T^{\mu}_t)_{t>0}$ is recurrent by Theorem \ref{pathuniquenesshunt}. The rest of the assertion follows from Theorem \ref{preinvlemma}, Corollary \ref{existinvariancemea} and Theorem \ref{existuniquenessinv}.
\end{proof}
\text{}\\
The following results deliver sufficient conditions for non-existence of finite invariant measures and finite infinitesimally invariant measures. 
\begin{prop}\label{nonexinv}
Suppose that at least one of the semigroups $(T^{\mu}_t)_{t>0}$ or $(T'^{,\mu}_t)_{t>0}$ is non-conservative. Then, there does not exist 
a finite invariant measure for $(T^{\mu}_t)_{t>0}$.
\end{prop}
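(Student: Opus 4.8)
The plan is to argue by contradiction: assume there exists a finite invariant measure $\widetilde{\mu}=\widetilde{\rho}\,dx$ for $(T^{\mu}_t)_{t>0}$ and derive that both $(T^{\mu}_t)_{t>0}$ and $(T'^{,\mu}_t)_{t>0}$ are conservative, contradicting the hypothesis. First I would feed $\widetilde{\mu}$ into Lemma \ref{extc0semi}: this shows that $\widetilde{\mu}$ is itself an infinitesimally invariant measure for $(L,C_0^{\infty}(\R^d))$ (so in particular $\widetilde{\rho}$ has the regularity of Remark \ref{equivconinv}(i)), that the extension of $(T^{\mu}_t)_{t>0}|_{C_0(\R^d)}$ to $L^1(\R^d,\widetilde{\mu})$ equals $(T^{\widetilde{\mu}}_t)_{t>0}$, and hence that $\widetilde{\mu}$ is a finite invariant measure for $(T^{\widetilde{\mu}}_t)_{t>0}$.

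Next, since $\widetilde{\mu}$ is finite and invariant for $(T^{\widetilde{\mu}}_t)_{t>0}$, Proposition \ref{fininvariant} applied with $\widetilde{\mu}$ in the role of $\mu$ gives that $(T^{\widetilde{\mu}}_t)_{t>0}$ is recurrent; Theorem \ref{pathuniquenesshunt}, applied to the pair of infinitesimally invariant measures $\mu$ and $\widetilde{\mu}$, then transfers recurrence to $(T^{\mu}_t)_{t>0}$. (These two steps are exactly the content of Lemma \ref{fininvlem}(i), so in the final write-up one may simply invoke that lemma.) Finally, recurrence of $(T^{\mu}_t)_{t>0}$ forces, via Theorem \ref{cosemirec} and Corollary \ref{existinvariancemea}, that both $(T^{\mu}_t)_{t>0}$ and $(T'^{,\mu}_t)_{t>0}$ are conservative, which contradicts the assumption that at least one of them is non-conservative. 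Hence no finite invariant measure for $(T^{\mu}_t)_{t>0}$ can exist.

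There is essentially no residual obstacle, since all the substantive work has been done beforehand: the construction of $(T^{\widetilde{\mu}}_t)_{t>0}$ out of a finite invariant measure (Lemma \ref{extc0semi}), the equivalence of recurrence and of conservativeness for semigroups attached to different infinitesimally invariant measures (Theorem \ref{pathuniquenesshunt}), and the implication from recurrence to conservativeness of a semigroup together with its dual (Corollary \ref{existinvariancemea}). The only point demanding a moment's attention is that $\widetilde{\mu}$ need not be a constant multiple of $\mu$ a priori, so one cannot apply the finite-measure criterion of Proposition \ref{fininvariant} directly to $(T^{\mu}_t)_{t>0}$; the bridge is precisely the transfer statement of Theorem \ref{pathuniquenesshunt}.
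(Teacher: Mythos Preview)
Your proof is correct and follows essentially the same route as the paper: a contradiction argument that uses Lemma~\ref{fininvlem}(i) (which you unpack into Lemma~\ref{extc0semi}, Proposition~\ref{fininvariant}, and Theorem~\ref{pathuniquenesshunt}) to obtain recurrence of $(T^{\mu}_t)_{t>0}$, and then Corollary~\ref{existinvariancemea} to conclude that both semigroups are conservative. The only cosmetic difference is that the paper cites Lemma~\ref{fininvlem} directly rather than reproving it inline.
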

\begin{proof}
Suppose to the contrary that there exists a finite invariant measure $\widetilde{\mu}$ for $(T^{\mu}_t)_{t>0}$. Then by Lemma \ref{fininvlem}, $(T^{\mu}_t)_{t>0}$ is recurrent, which contradicts the above assumption by Corollary \ref{existinvariancemea}.
\end{proof}

\begin{prop} \label{prop1.22}
Assume that there exists an infinite infinitesimally invariant measure $\widehat{\mu}$ for $(L, C_0^{\infty}(\R^d))$. Then the following properties are satisfied.
\begin{itemize}
\item[(i)]
There does not exist a finite invariant measure for $(T^{\mu}_t)_{t>0}$.
\item[(ii)]
If $(T^{\mu}_t)_{t>0}$ is conservative, then there is no finite infinitesimally invariant measure for $(L, C_0^{\infty}(\R^d))$. 
\end{itemize}
\end{prop}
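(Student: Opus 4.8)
The plan is to prove both assertions by contradiction, in each case arranging the hypotheses so that Lemma \ref{fininvlem} becomes applicable. Its conclusion forces $(T^{\mu}_t)_{t>0}$ to be recurrent and, via the \lq\lq in particular\rq\rq\ clause of Lemma \ref{fininvlem} (which rests on Theorem \ref{preinvlemma}), forces $\mu$ to be finite and to be the unique infinitesimally invariant measure for $(L,C_0^{\infty}(\R^d))$ up to multiplicative constants. This is incompatible with the existence of the \emph{infinite} infinitesimally invariant measure $\widehat{\mu}$: uniqueness would give $\widehat{\mu}=c\,\mu$ for some $c>0$, and hence $\widehat{\mu}$ would be finite.

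For (i), I would suppose that a finite invariant measure $\widetilde{\mu}$ for $(T^{\mu}_t)_{t>0}$ exists. This is exactly hypothesis (i) of Lemma \ref{fininvlem}, so that lemma yields that $(T^{\mu}_t)_{t>0}$ is recurrent and that $\mu$ is finite and is the unique infinitesimally invariant measure for $(L,C_0^{\infty}(\R^d))$. Since $\widehat{\mu}$ is an infinitesimally invariant measure, uniqueness forces $\widehat{\mu}=c\,\mu$ for some constant $c>0$; as $\mu$ is finite, so is $\widehat{\mu}$, contradicting the standing assumption that $\widehat{\mu}$ is infinite. Hence no finite invariant measure for $(T^{\mu}_t)_{t>0}$ can exist.

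For (ii), I would additionally assume that $(T^{\mu}_t)_{t>0}$ is conservative, and suppose for contradiction that there is a finite infinitesimally invariant measure $\widetilde{\mu}$ for $(L,C_0^{\infty}(\R^d))$. Then hypothesis (ii) of Lemma \ref{fininvlem} is met (the required conservativeness being precisely the hypothesis of Proposition \ref{prop1.22}(ii)), and exactly as above Lemma \ref{fininvlem} forces $\mu$ to be finite and the unique infinitesimally invariant measure, whence $\widehat{\mu}=c\,\mu$ is finite, a contradiction. Alternatively, one may deduce (ii) from (i): under conservativeness a finite infinitesimally invariant $\widetilde{\mu}$ is, by Theorem \ref{pathuniquenesshunt} and Lemma \ref{equicondi32}(iii) applied with $\widetilde{\mu}$ in the role of $\mu$, an invariant measure for $(T^{\widetilde{\mu}}_t)_{t>0}$ and therefore (again by Theorem \ref{pathuniquenesshunt}) a finite invariant measure for $(T^{\mu}_t)_{t>0}$, which is excluded by (i).

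I do not expect a genuine obstacle here: the statement is a short corollary of the already-established equivalences among recurrence, conservativeness and (infinitesimal) invariance. The only point requiring care is bookkeeping — making sure that the appropriate alternative (i) or (ii) of Lemma \ref{fininvlem} is invoked, and in particular that in part (ii) the conservativeness needed by Lemma \ref{fininvlem}(ii) is supplied by the hypothesis of Proposition \ref{prop1.22}(ii) itself rather than assumed elsewhere.
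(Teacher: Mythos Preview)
Your argument is correct and follows essentially the same route as the paper: in each part you invoke the appropriate alternative of Lemma \ref{fininvlem} to obtain recurrence and hence, via Theorem \ref{preinvlemma}, uniqueness and finiteness of $\mu$, which contradicts the infiniteness of $\widehat{\mu}$. The paper's proof is marginally more terse but structurally identical; your additional alternative derivation of (ii) from (i) is a valid extra observation.
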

\begin{proof}
(i) Suppose there exists a finite invariant measure $\widetilde{\mu}$ for $(T^{\mu}_t)_{t>0}$. 
Then by Lemma \ref{extc0semi}, $\widetilde{\mu}$ is an infinitesimally invariant measure for $(L, C_0^{\infty}(\R^d))$. 
Moreover, $(T^{\mu}_t)_{t>0}$ is recurrent  by Lemma \ref{fininvlem}. Therefore, by Theorem \ref{preinvlemma}, $\widehat{\mu}$ is finite, which is contradiction.\\
(ii) If there exists a finite infinitesimally invariant measure $\widetilde{\mu}$ for $(L, C_0^{\infty}(\R^d))$, then
by Lemma \ref{fininvlem}, $(T^{\mu}_t)_{t>0}$ is recurrent. By Theorem  \ref{preinvlemma}, $\widehat{\mu}$ equals $\mu$ up to a multiplicative constant, which is contradiction.
\end{proof}
\text{}\\
As a direct consequence of Proposition \ref{nonexinv}, Proposition \ref{prop1.22} and Lemma \ref{fininvlem}, we obtain the following result which characterizes the non-existence of finite invariant measures for $(T^{\mu}_t)_{t>0}$.
\begin{cor} \label{carinva}
$(T^{\mu}_t)_{t>0}$ has no finite invariant measure, if and only if there exists an infinite infinitesimally invariant measure for $(L, C_0^{\infty}(\R^d))$ or $(T^{\mu}_t)_{t>0}$ is non-conservative or $(T'^{, \mu}_t)_{t>0}$ is non-conservative.
\end{cor}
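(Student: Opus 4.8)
\textbf{Proof proposal for Corollary \ref{carinva}.}

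The plan is to prove the biconditional by establishing both implications, using only the three preceding results cited (Proposition \ref{nonexinv}, Proposition \ref{prop1.22}, Lemma \ref{fininvlem}) together with the dichotomy that $(T^{\mu}_t)_{t>0}$ is recurrent or transient and the basic fact (Corollary \ref{existinvariancemea}) that recurrence forces conservativeness of both $(T^{\mu}_t)_{t>0}$ and $(T'^{,\mu}_t)_{t>0}$. I would first dispose of the easy direction ($\Leftarrow$): suppose one of the three listed conditions holds. If there exists an infinite infinitesimally invariant measure, then Proposition \ref{prop1.22}(i) directly yields that no finite invariant measure for $(T^{\mu}_t)_{t>0}$ exists. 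If instead $(T^{\mu}_t)_{t>0}$ or $(T'^{,\mu}_t)_{t>0}$ is non-conservative, then Proposition \ref{nonexinv} gives the same conclusion. In all three cases, $(T^{\mu}_t)_{t>0}$ has no finite invariant measure.

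For the converse direction ($\Rightarrow$), I would argue by contraposition: assume that none of the three conditions holds, i.e. every infinitesimally invariant measure for $(L,C_0^{\infty}(\R^d))$ is finite, and both $(T^{\mu}_t)_{t>0}$ and $(T'^{,\mu}_t)_{t>0}$ are conservative, and then produce a finite invariant measure for $(T^{\mu}_t)_{t>0}$. The key observation is that $\mu=\rho\,dx$ is itself an infinitesimally invariant measure (by \eqref{infinvalfir}), hence under the assumption it is finite. Now invoke Lemma \ref{fininvlem}(ii): since $(T^{\mu}_t)_{t>0}$ is conservative and $\mu$ is a finite infinitesimally invariant measure for $(L,C_0^{\infty}(\R^d))$, we conclude that $(T^{\mu}_t)_{t>0}$ is recurrent and, in particular (again by the last assertion of Lemma \ref{fininvlem} or by Corollary \ref{existinvariancemea}), $\mu$ is a finite invariant measure for $(T^{\mu}_t)_{t>0}$. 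This contradicts the hypothesis that $(T^{\mu}_t)_{t>0}$ has no finite invariant measure, completing the contrapositive.

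The only mild subtlety, and the step I would be most careful about, is making sure the hypotheses of Lemma \ref{fininvlem}(ii) are literally met: one needs a \emph{finite} infinitesimally invariant measure, and the natural candidate is the fixed measure $\mu$ from Section \ref{section1}, which is infinitesimally invariant by construction; the assumption ``no infinite infinitesimally invariant measure exists'' forces $\mu$ itself to be finite, so the lemma applies to $\widetilde{\mu}=\mu$. Everything else is a clean case analysis, so no heavy computation is needed; the proof is essentially a one-line invocation in each direction once the contrapositive is set up correctly.
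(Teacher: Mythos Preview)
Your proof is correct and follows exactly the route the paper indicates: the paper states the corollary as a direct consequence of Proposition \ref{nonexinv}, Proposition \ref{prop1.22}, and Lemma \ref{fininvlem}, and your case analysis for ($\Leftarrow$) together with the contrapositive argument for ($\Rightarrow$) (using that $\mu$ itself must be finite and then applying Lemma \ref{fininvlem}(ii) with $\widetilde{\mu}=\mu$) is precisely how one unpacks that. The mention of the recurrence/transience dichotomy is unnecessary here, but harmless.
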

\begin{rem}\label{explicitcritinv}
\rm Combining Theorems \ref{preinvlemma} and \ref{existuniquenessinv} with explicit recurrence results that hold under condition {\bf (H)} leads to explicit criteria for existence and uniqueness of (infinitesimally) invariant measures. In particular, in the case where $\mu$ is finite,  
(explicit) recurrence results can be derived by (explicit) conservativeness results for $(T^{\mu}_t)_{t>0}$ or $(T'^{, \mu}_t)_{t>0}$
according to Proposition \ref{fininvariant} and Lemma \ref{equicondi32} under the mere assumption {\bf (a)} of \cite{LST20} (cf. paragraph right before Proposition \ref{fininvariant}). Explicit conservativeness or recurrence criteria in the here considered framework can be found for instance in \cite{GT2}, \cite{LST20} and \cite[Corollary 15]{GT17}, which is the basis for (iii)(b) below. We mention here several different kinds of examples, which are all concluded with the additional application of Theorems \ref{preinvlemma} and \ref{existuniquenessinv} (and Theorem \ref{cosemirec} for (i) and (ii) below). In all these examples, according to \cite[Corollary 2.2]{St99}, additionally $L^1$-uniqueness holds:\\
(i) As a consequence of \cite[Proposition 3.40, Theorem 3.38(iii), Corollary 3.41]{LST20} (which imply recurrence of $(T^{\mu}_t)_{t>0}$), we obtain the following:\\[3pt]
{\it Suppose that for some $N_0 \in \N$ and a constant $c \geq 0$, there exists $V \in C(\R^d) \cap C^2(\R^d \setminus \overline{B}_{N_0})$ with $\inf_{\partial B_r} V \rightarrow \infty$ as $r \rightarrow \infty$, such that
\begin{equation} \label{lapynovprop}
LV \leq -c \quad \text{ a.e. on } \; \R^d \setminus \overline{B}_{N_0}.
\end{equation}
Then $(T^{\mu}_t)_{t>0}$ and $(T'^{, \mu}_t)_{t>0}$ are recurrent, and $\mu$ is the unique infinitesimally invariant measure for $(L, C_0^{\infty}(\R^d))$ and $(L'^{, \mu}, C_0^{\infty}(\R^d))$ and the unique invariant measure for $(T^{\mu}_t)_{t>0}$ and $(T'^{, \mu}_t)_{t>0}$.  In particular, if $c>0$, then $\mu$ is finite by \cite[Proposition 3.47]{LST20} (see also \cite[Theorem 2]{BRS} for the original result) and from \cite[Corollary 3.48]{LST20}.
An example of such a $V$ is given by $V(x):= \frac12 \ln(\|x\|^2 \vee N_0^2)+2$, $x \in \R^d$  for some $N_0 \in \N$. Then,
\eqref{lapynovprop} is fulfilled for the given $V$, if}
\begin{eqnarray*} 
\label{layporec}
-\frac{\langle A(x)x, x \rangle}{ \left \| x \right \|^2 }+ \frac12\mathrm{trace}A(x)+ \big \langle \mathbf{G}(x), x \big \rangle \leq -c\|x\|^2, \quad \text{ for a.e. } x \in \R^d \setminus \overline{B}_{N_0}.
\end{eqnarray*}
Note: \cite[Theorem 4.4.1]{BKRS} considers a criterion similar to \eqref{lapynovprop} with $c=0$ to obtain uniqueness of infinitesimally invariant measures for $(L, C_0^{\infty}(\R^d))$. Indeed, \cite[Theorem 4.4.1]{BKRS} is proven explicitly through a direct calculation using auxiliary functions and the strong maximum principle, whereas we use the general concept Theorem \ref{preinvlemma} and a recurrence criterion presented in \cite{LST20}. In particular, we obtain not only that $\mu$ is the unique infinitesimally invariant measure for $(L, C_0^{\infty}(\R^d))$, but also for $(L'^{, \mu}, C_0^{\infty}(\R^d))$, since $(T'^{, \mu}_t)_{t>0}$ is also recurrent by Theorem \ref{cosemirec}. Moreover, we also obtain that $\mu$ is the unique invariant measure for $(T^{\mu}_t)_{t>0}$ and $(T'^{, \mu}_t)_{t>0}$ by Theorem \ref{existuniquenessinv}.\\
(ii) Similarly to (i) the following follows from \cite[Corollaries 3.41, 3.48, Proposition 3.47, and Theorem 3.38(iii)]{LST20}:\\
{\it Assume $d=2$. Let $\Psi_1, \Psi_2 \in C(\R^2)$ with $\Psi_1(x), \Psi_2(x)>0$ 
for all $x \in \R^2$, $N_0 \in \N$ and $Q=(q_{ij})_{1 \leq i,j \leq 2}$ be a matrix of measurable functions such that  
$Q^T(x)Q(x)=id$ for all $x\in \R^2\setminus \overline{B}_{N_0}$. 
Suppose that the diffusion matrix $A$ (satisfying {\bf (H)}) has the following form
$$
A(x) = Q^T(x) \begin{pmatrix}
\Psi_1(x)  & 0 \\ 
0 & \Psi_2(x)
\end{pmatrix}Q(x) \; \;\text{ for all $x\in \R^2\setminus \overline{B}_{N_0}$,}
$$
and that there exists a constant $c \geq 0$, such that
\begin{equation} 
\label{eq:3.2.1.20}
\frac{|\Psi_1(x)-\Psi_2(x)|}{2}  + \langle \mathbf{G}(x),x  \rangle \leq -c\|x\|^2
\end{equation}
for a.e. $x\in \R^2\setminus \overline{B}_{N_0}$. Then $(T^{\mu}_t)_{t>0}$ and $(T'^{, \mu}_t)_{t>0}$ are recurrent, and $\mu$ is the unique infinitesimally invariant measure for $(L, C_0^{\infty}(\R^d))$ and $(L'^{, \mu}, C_0^{\infty}(\R^d))$ and the unique invariant measure for $(T^{\mu}_t)_{t>0}$ and $(T'^{, \mu}_t)_{t>0}$.  In particular, if $c>0$, then $\mu$ is finite.}\\
(iii) Consider assumption {\bf (H$^{\prime}$)}. Then $(T^{\mu}_t)_{t>0}$ and $(T'^{, \mu}_t)_{t>0}$ are recurrent and $\mu$ is the
unique infinitesimally invariant measure for $(L=\frac12\text{trace}(A \nabla^2) +\langle \beta^{\rho, A}+\mathbf{B}, \nabla \rangle=L^{0,\mu}+\langle \mathbf{B}, \nabla\rangle, C_0^{\infty}(\R^d))$ and $(L'^{, \mu}=\frac12\text{trace}(A \nabla^2) +\langle \beta^{\rho, A}-\mathbf{B}, \nabla \rangle=L^{0,\mu}-\langle \mathbf{B}, \nabla\rangle, C_0^{\infty}(\R^d))$ and the unique invariant measure for $(T^{\mu}_t)_{t>0}$ and $(T'^{, \mu}_t)_{t>0}$, if one of the following conditions (a)-(c) is satisfied:
\begin{itemize}
\item[(a)] {\it (Cf. \cite[Proposition 3.41]{LST20})
$$
a_n:=\int_1^n r\Big (\int_{B_r} \Big (\frac{\langle A(x)x, x \rangle}{\|x\|^2} + | \langle \mathbf{B}(x),x  \rangle  | \Big )\mu(dx)\Big )^{-1}dr\longrightarrow \infty \ \text{ as } \ n\to \infty
$$
and }
$$
\lim _{n\to \infty}\frac{\ln\big (\int_{B_n}  |\langle \mathbf{B}(x),x  \rangle  | \mu(dx)\vee 1\big )}{a_n}=0.
$$
\item[(b)] {\it (Cf. \cite[Proposition 3.31(i)]{LST20}) $\mu$ is finite and there exist constants $M>0$ and $N_0\in \N$, such that 
$$
\frac{\langle A(x)x,x\rangle}{\|x\|^2}+|\langle\mathbf{B}(x),x\rangle| \leq M\|x\|^{2} \ln(\|x\|+1),
$$
for a.e. $x\in \R^d\setminus \overline{B}_{N_0}$.}
\item[(c)] {\it (Cf. \cite[Proposition 2.15(i), Corollary 2.16(i)]{LST20}) $\mu$ is finite and }
$$
a_{ij}\in L^1 (\R^d ,\mu), \ 1\le i,j\le d, \ \mathbf{B}\in L^1 (\R^d, \R^d, \mu).
$$
Note: Condition (c) is used in \cite[Theorem 4.1.6(ii)]{BKRS} to prove uniqueness of infinitesimally invariant probability measures among all infinitesimally invariant measures (cf. \cite[Remark 4.17]{BKRS}) in the special case where $\mathbf{B}=\mathbf{G}-\beta^{\rho, A}$. Here we consider {\bf (H$^{\prime}$)} and $\mu$ finite, which can of course be replaced by the abstract assumption {\bf (H)} and $\mu$ finite as in \cite[Theorem 4.1.6]{BKRS}. We call the latter assumption abstract as then $\rho$, except its local regularity properties is unknown, so that the condition $\mathbf{G}-\beta^{\rho, A}\in L^1 (\R^d, \R^d, \mu)$ is in general not checkable. In contrast to \cite{BKRS}, we do not need an individual proof for (c), since it follows from the general concept of recurrence, which seems not to be disclosed in \cite{BKRS}. Moreover also \cite[Theorem 4.1.6(iii)]{BKRS} follows immediately from our results since the  classical Lyapunov condition there is just a sufficient condition for conservativeness of $(T^{\mu}_t)_{t>0}$ (\cite[Corollary 2.16(ii)]{LST20}), which then by the finiteness of $\mu$ implies recurrence (Proposition \ref{fininvariant}).
\end{itemize}
\end{rem}
\text{}\\
In the following example, we discuss different aspects of Remark  \ref{explicitcritinv}(i) and in particular the sharpness of condition \eqref{lapynovprop} in relation to the values of $c\geq0$.

\begin{exam} \label{exampleinva} 
\rm
(i) Let $m \in \R$ and consider 
$$
\rho(x):=\left\{\begin{matrix}
&1&  \text{on }\, \overline{B}_1, \\ 
&\|x\|^{m}&  \text { on }\, \R^d \setminus \overline{B}_1.
\end{matrix}\right.
$$ 
Then $\rho \in  H_{loc}^{1,\infty}(\R^d)\cap C(\R^d) \subset H^{1,p}_{loc}(\R^d) \cap C(\R^d)$. Let $A=id$ and $\mathbf{G}=\frac{\nabla \rho}{2 \rho}$.  Then for any $f \in C_0^{\infty}(\R^d)$,
$Lf = \frac12\Delta f$ on $B_1$,
$$
Lf(x)  =\frac12 \Delta f(x) + \big \langle \frac{mx}{2 \|x\|^2}, \nabla f(x) \big \rangle \quad \text{ for all $x \in \R^d \setminus \overline{B}_1$.}
$$
and $\mu=\rho dx$ is an infinitesimally invariant measure for $(L, C_0^{\infty}(\R^d))$. By \cite[Proposition 1.10(c)]{St99}, $(T^{\mu}_t)_{t>0}$ is conservative and $\mu$ is an invariant measure for $(T^{\mu}_t)_{t>0}$. Let $V(x)= 1+\|x\|^2$. Then
\[
LV(x)=\text{trace}(A(x))+2 \langle \mathbf{G}(x), x \rangle =d+m \quad \text{  for all $x \in \R^d \setminus \overline{B}_1$. }
\]
Here $(\mathcal{E}^{0, \mu}, D(\mathcal{E}^{0, \mu}))$ is the symmetric Dirichlet form defined as the closure of
$$
\mathcal{E}^{0,\mu}(f,g)=\frac12\int_{\R^d} \langle \nabla f, \nabla g \rangle d\mu, \quad f,g \in C_0^{\infty}(\R^d), 
$$
on $L^2(\R^d, \mu)$ and by Proposition \ref{prop1:9}, $(T^{\mu}_t)_{t>0}$ considered as semigroup on $L^2(\R^d, \mu)$ is associated with $(\mathcal{E}^{0, \mu}, D(\mathcal{E}^{0, \mu}))$. If $m+d \neq 0$, then for any $r>1$,
$$
\mu(B_r) = \int_{B_r} d\mu  = d\cdot dx(B_1)\int_1^r |s|^{m+d-1} ds +\mu(B_1) = d \cdot dx(B_1)\frac{1}{m+d}(r^{m+d}-1) + dx(B_1).
$$
If $m+d=0$, then $\mu(B_r)=d \cdot dx(B_1)\ln r + dx(B_1)$. Thus, $\mu$ is finite, if and only if  $m<-d$. If $m=-d$, then $LV(x) =0$ for all $x \in \R^d \setminus \overline{B}_1$, but $\mu$ is not finite.
Moreover, one can calculate
\begin{equation} \label{recintcon}
\int_{1}^{\infty} \frac{r}{\mu(B_r)} dr =\infty
\end{equation}
if and only if $m+d \leq 2$. By \cite[Theorem 3]{Stu1} and \cite[Theorem 4.1]{Sturm98} (or alternatively by \cite[Theorem 21]{GT2} and Proposition \ref{strictirreducibilityprop}), $(T^{\mu}_t)_{t>0}$ is recurrent if $m+d \leq 2$. If we choose $m:=2-d$, then $\mu$ is infinite and $LV(x) =2>0$ for all $x \in \R^d \setminus B_1$, but nonetheless $(T^{\mu}_t)_{t>0}$ is recurrent. 
Assume $m=0$. Then it is well-known that $(T^{\mu}_t)_{t>0}$ is recurrent if $d=2$ and transient if $d \geq 3$ (which can be reconfirmed using \cite[Corollary 2.9]{Stu2}, \cite[Theorem 4.1]{Sturm98} and \eqref{recintcon}).\\
On the other hand, if we choose  for some $N_0 \in \N$, $V \in C(\R^d) \cap C^2(\R^d \setminus \overline{B}_{N_0})$ defined as in Remark \ref{explicitcritinv}(i), then
$$
LV(x)=\frac{-2+d +m}{2\|x\|^2}\;\quad \text{ for all  } x \in \R^d \setminus \overline{B}_{N_0},
$$
hence $LV \leq 0$ on  $\R^d \setminus \overline{B}_{N_0}$, if and only if $m+d \leq 2$.
\\
\centerline{}
(ii) In the situation of (i), consider again $V(x):=1+\|x\|^2$, $x \in \R^d$ and $m+d =0$.  Then $LV  = 0$ on $\R^d \setminus \overline{B}_{1}$ and by Remark \ref{explicitcritinv}(i), $\mu$ is
the unique infinitesimally invariant measure for $(L, C_0^{\infty}(\R^d))$ and the unique  invariant measure for $(T^{\mu}_t)_{t>0}$.  Let $\psi \in H^{1,p}_{loc}(\R^d) \cap C(\R^d)$ be arbitrarily given, such that $\psi(x)>0$ for all $x \in \R^d$. Then $\frac{\rho}{\psi}\in H^{1,p}_{loc}(\R^d)\cap C(\R^d)$,
\begin{eqnarray}\label{preinvltilde}
\int_{\R^d} \Big(\frac{\rho}{\psi} \Big ) Lf \cdot \psi dx = 0, \qquad \forall f \in C_0^{\infty}(\R^d),
\end{eqnarray}
and
\begin{eqnarray}\label{preinvltilde0rec}
\frac{\rho}{\psi} L V = 0,  \;\; \text{ a.e. on $\R^d\setminus \overline{B}_{1}$.}
\end{eqnarray}
Let $\mathcal{L}:= \frac{\rho}{\psi} L$ on $C_0^{\infty}(\R^d)$, i.e.
$$
\mathcal{L}f:= \frac12 \text{trace}(\frac{\rho}{\psi}A\nabla^2 f) + \langle \frac{\rho}{\psi} \mathbf{G}, \nabla f  \rangle, \qquad f \in C_0^{\infty}(\R^d).
$$
 Then the coefficients of $\mathcal{L}$ satisfy the assumption {\bf (H)} and $\psi dx$ is an infinitesimally invariant measure for $(\mathcal{L}, C_0^{\infty}(\R^d))$. Thus using \eqref{preinvltilde} and \cite[Theorem 1.5]{St99}, there exists a closed extension $(\overline{\mathcal{L}}, D(\overline{\mathcal{L}}))$ of $(\mathcal{L}, C_0^{\infty}(\R^d))$ on $L^1(\R^d, \psi dx)$ which generates a sub-Markovian $C_0$-semigroup of contractions $(\mathcal{T}_t)_{t>0}$, $r \in [1, \infty)$ (see Section \ref{section1}). Using \eqref{preinvltilde0rec} and Remark \ref{explicitcritinv}(i), $(\mathcal{T}_t)_{t>0}$ is recurrent and $\psi dx$ is possibly finite and the unique infinitesimally invariant measure for $(\mathcal{L}, C_0^{\infty}(\R^d))$  and the unique  invariant measure for $(\mathcal{T}_t)_{t>0}$.
\end{exam}
In the following example we further study the \lq\lq  sharpness\rq\rq\ of  the Lyapunov conditon. This example could also be considered with an additional non-sectorial weakly $\mu$-divergence free perturbation. However, to keep it simple, we just consider the symmetric case of distorted Brownian motion, which is for itself meaningful.
\begin{exam} \label{exampleinva2} \rm
(i) Let $p>d$ and $\gamma \in (0,1)$ be such that $p(1-\gamma) <d$. Define $\psi_0(x):=\|x\|^{\gamma}$, $x \in B_{1/4}$. Then $\psi_0 \in H^{1,p}(B_{1/4}) \cap C(\overline{B}_{1/4})$. Extend $\psi_0 \in H^{1,p}(B_{1/4})$ to a positive function $\psi \in H^{1,p}(B_{1/2}) \cap C_0(B_{1/2})$ and define (cf. \cite[Example 3.30(ii)]{LST20})
$$
\phi(x) :=1+\sum_{k=1}^{\infty} \psi(x-k\mathbf{e}_1), \quad \text{ for }x \in \R^d, \quad \mathbf{e}_1:=(1,0,\ldots,0).
$$ 
Then $\phi \in H^{1,p}_{loc}(\R^d) \cap C(\R^d)$ is bounded below and above by strictly positive constants and $\nabla \phi$ is unbounded in a neighborhood of infinitely many isolated points. More precisely, it holds for each $k \in \N$
$$
\phi(x)=1+\|x-k\mathbf{e}_1\|^{\gamma}, \text{ for all $x \in B_{1/4}(k\mathbf{e}_1)$,} 
$$
and
$$\nabla \phi (x)= \frac{\gamma}{\|x-k\mathbf{e}_1\|^{1-\gamma}} \frac{x-k\mathbf{e}_1}{\|x-k\mathbf{e}_1\|} \;\; \text{ a.e. on } B_{1/4}(k\mathbf{e}_1).
$$
Now let 
$$
\rho(x):=\exp(-\|x\|^2 \phi(x)), \quad \text{ for }x \in \R^d.
$$
Then $\mu=\rho dx$ is finite and $\rho \in H^{1,p}_{loc}(\R^d) \cap C(\R^d)$ with $\rho(x)>0$ for all $x\in \R^d$. Consider $(L, C_0^{\infty}(\R^d))$ given by
$$
Lf = \frac12 \Delta f +\langle \mathbf{G}, \nabla f \rangle, \quad f \in C_0^{\infty}(\R^d),\quad \text{with }\ \mathbf{G}=\frac{\nabla \rho}{2\rho}.
$$
Then $\mu$ is an infinitesimally invariant measure for $(L, C_0^{\infty}(\R^d))$.  By \cite[Proposition 1.10(a)]{St99} or \cite[Proposition 2.15(i)]{LST20}, $\mu$ is $(T^{\mu}_t)_{t>0}$-invariant (see Remark \ref{equivconinv}(ii)),  hence by Proposition \ref{fininvariant} $(T^{\mu}_t)_{t>0}$ is recurrent. 
Thus, by Theorems \ref{preinvlemma} and \ref{existuniquenessinv}, $\mu$ is the unique infinitesimally invariant measure for $(L, C_0^{\infty}(\R^d))$ and the unique invariant measure for $(T^{\mu}_t)_{t>0}$, respectively.
Let $k \in \N$ be fixed and define
$$
\mathbf{F}^k(x):=\frac{\gamma}{\|x-k\mathbf{e}_1\|^{1-\gamma}} \frac{x-k\mathbf{e}_1}{\|x-k\mathbf{e}_1\|}, \quad \text{ for  } x \in B_{1/4}(k\mathbf{e}_1) \setminus \{k \mathbf{e}_1 \}.
$$
Then $\mathbf{F}^k(x)=\nabla \phi(x)$ a.e. on $B_{1/4}(k\mathbf{e}_1)$. For each $t \in (-1/4,1/4) \setminus \{0\}$
\begin{eqnarray*}
\left \langle \mathbf{F}^k((t+k)\mathbf{e}_1), (t+k)\mathbf{e}_1 \right \rangle &=& \left \langle\frac{\gamma}{|t|^{1-\gamma}} \frac{t\mathbf{e}_1}{|t|}, (t+k)\mathbf{e}_1 \right \rangle
=\frac{\gamma}{|t|^{1-\gamma}} (|t| +k \frac{t}{|t|}) \\
&=& \gamma |t|^{\gamma} +\frac{k\gamma}{|t|^{1-\gamma}}\frac{t}{|t|} \longrightarrow -\infty \quad \text{ as $t \rightarrow 0-$}.
\end{eqnarray*}
Therefore, there exists a sequence $(x_n^k)_{n \geq 1}$ in $B_{1/4}(k\mathbf{e}_1) \setminus \{k \mathbf{e}_1 \}$ such that $\lim_{n \rightarrow \infty}x_n^k = k\mathbf{e}_1$ and
\begin{equation}\label{26}
\langle \mathbf{F}^k(x_n^k), x_n^k \rangle \longrightarrow -\infty \quad \text{ as $n \rightarrow \infty$.}
\end{equation}
Since
$$
\frac{\nabla \rho}{2\rho}(x) = \frac12 \nabla (\ln \rho)(x) =\frac12 (-2 \phi(x) x -\|x\|^2 \nabla \phi(x)) =-\phi(x) x-\frac{\|x\|^2}{2} \mathbf{F}^k(x), \quad \text{a.e. on $B_{1/4}(k\mathbf{e}_1)$},
$$ 
$$
\langle \mathbf{G}(x), x \rangle = -\phi(x)\| x\|^2-\frac{\|x\|^2}{2} \langle \mathbf{F}^k(x), x\rangle \quad \text{a.e. on $B_{1/4}(k\mathbf{e}_1)$}.
$$
Note that the function $x \mapsto -\phi(x)\| x\|^2-\frac{\|x\|^2}{2} \langle \mathbf{F}^k(x), x\rangle$ is continuous on $B_{1/4}(k\mathbf{e}_1) \setminus \{k \mathbf{e}_1 \}$ and that by \eqref{26}
$$
-\phi(x_n^k)\| x_n^k\|^2-\frac{\|x_n^k\|^2}{2} \langle \mathbf{F}^k(x_n^k), x_n^k\rangle \longrightarrow \infty \quad \text{ as } n \rightarrow \infty.
$$
Thus for arbitrarily given $M>0$, there exists an open ball $V_M^k \subset B_{1/4}(k\mathbf{e}_1) \setminus \{ k\mathbf{e}_1 \}$ such that
$$
\langle \mathbf{G}(x), x \rangle \geq M, \quad \text{ a.e. on } V_M^k,
$$
which means that there is no essential upper bound for $\langle \mathbf{G}(x),x \rangle$ on $B_{1/4}(k\mathbf{e}_1)$.
If we choose $V \in C^2(\R^d)$ such that
$$
V(x)=1+\|x\|^2, \quad \text{ or } \; V(x)=\ln(1+\|x\|^2), 
$$
then 
$$
LV(x) =d+2\langle \mathbf{G}(x), x \rangle
$$
or
$$
LV(x) = \frac{(d-2)\|x\|^2 +d}{(\|x\|^2+1)^2}+\frac{2\langle \mathbf{G}(x),x \rangle}{\|x\|^2+1},
$$
respectively. In both cases, it does not hold that for some constants $K\ge 0$ and $N_0 \in \N$
$$
LV \leq -K, \quad \text{ a.e. on } \R^d \setminus \overline{B}_{N_0}
$$
and it might be rather difficult to find a Lyapunov function V with the latter property.\\[3pt]
(ii)\,Let $m \in \R$ and
$$
\widetilde{\rho}(x):=\left\{\begin{matrix}
&1&  \text{on }\, \overline{B}_1, \\ 
&\|x\|^{m}&  \text { on }\, \R^d \setminus \overline{B}_1,
\end{matrix}\right.
$$ 
$\rho=\phi \widetilde{\rho}$, $\mu=\rho dx$, where $\phi$ is defined as in (i). Since $\phi,\widetilde{\rho} \in H^{1, p}_{loc}(\R^d) \cap C(\R^d)$, we have $\rho \in H^{1,p}_{loc}(\R^d) \cap C(\R^d)$. Let $(L, C_0^{\infty}(\R^d))$ be given by
$$
Lf = \frac12 \Delta f +\langle \mathbf{G}, \nabla f \rangle, \quad f \in C_0^{\infty}(\R^d),\quad \text{with }\ \mathbf{G}=\frac{\nabla \rho}{2\rho}=\frac{\nabla \phi}{2\phi}+\frac{\nabla \widetilde{\rho}}{2\widetilde{\rho}}.
$$
Since $\phi$ is globally bounded above and below by strictly positive constants, according to the calculations in Example \ref{exampleinva}(i), $\mu$ is finite if and only if $m+d<0$ and there exists a constant $c>0$ independent of $r$ such that
$\mu(B_r) \leq c r^{(m+d) \vee 1}$ for any $r>1$. Thus by \cite[Proposition 3.31]{LST20}, $(T^{\mu}_t)_{t>0}$ is conservative and $\mu$ is an invariant measure for $(T^{\mu}_t)_{t>0}$. Consequently,  by Proposition \ref{prop1.22}(ii) there exists no finite infinitesimally invariant measure for $(L, C_0^{\infty}(\R^d))$  if $m+d \geq 0$.  Now assume that $0\leq m+d \leq 2$. Then $\mu$ is infinite and since \eqref{recintcon} holds, it follows from \cite[Theorem 3]{Stu1} and \cite[Theorem 4.1]{Sturm98} (or alternatively from \cite[Theorem 21]{GT2} and Proposition \ref{strictirreducibilityprop}) that $(T^{\mu}_t)_{t>0}$ is recurrent. Hence by Theorems \ref{preinvlemma} and \ref{existuniquenessinv}, $\mu$ is the unique infinitesimally invariant measure for $(L, C_0^{\infty}(\R^d))$ and the unique invariant measure for $(T^{\mu}_t)_{t>0}$, respectively. But similarly to (i), we can observe that for each $k \in \N$, $\langle \mathbf{G}(x), x \rangle$ has no essential upper bound on $B_{1/4}(k\mathbf{e}_1)$. For instance, choose $V$ explicitly as in Remark \ref{explicitcritinv}(i).
Then 
\begin{equation*}
LV(x) =\frac{d-2}{ 2\left \| x \right \|^2 }+ \frac{\big \langle \mathbf{G}(x), x \big \rangle}{\|x\|^2} \leq 0, \quad \text{ for a.e. $x \in \R^d \setminus \overline{B}_{N_0}$}
\end{equation*}
does not hold and it may be rather difficult to find a general $V \in C^2(\R^d \setminus \overline{B}_{N_0}) \cap C(\R^d)$ satisfying \eqref{lapynovprop}.
\end{exam}
\begin{rem}\label{dxunique}\rm
Let $A=id$ and $\mathbf{G}=0$. Then the Lebesgue measure $dx$ is the unique infinitesimally invariant measure for $(\frac12{\Delta}, C_0^{\infty}(\R^d))$ and the unique invariant measure for $(T^{\mu}_t)_{t>0}$. 
Indeed, if $\widetilde{\mu}$ is an infinitesimally invariant measure for $(\frac12{\Delta}, C_0^{\infty}(\R^d))$, then it is well-known (use for instance \cite[Corollaries 2.10, 2.11]{BKR2}, \cite[Theorem 3, page 334]{Ev11}, and \cite[Theorem 3.1]{A01}) that $\widetilde{\mu}= dx$ up to a multiplicative constant. Hence $\widetilde{\mu}=\mu=dx$ up to a multiplicative constant.
Then, since $(T^{\mu}_t)_{t>0}=(T'^{, \mu}_t)_{t>0}$ and $(T^{\mu}_t)_{t>0}$ is conservative, we get by Lemma \ref{equicondi32}(i) that $dx$ is an invariant measure for $(T^{\mu}_t)_{t>0}$.
Since any invariant measure for $(T^{\mu}_t)_{t>0}$ is an infinitesimally invariant measure for $(\frac12 \Delta, C_0^{\infty}(\R^d))$ by Lemma \ref{extc0semi}, uniqueness of the invariant measure $dx$ follows. As it is well-known $(T^{\mu}_t)_{t>0}$ is recurrent if $d=2$ and transient if $d \geq 3$.
Therefore, recurrence is not a necessary condition for uniqueness of infinitesimally invariant measures holding together with existence and uniqueness of invariant measures.
\end{rem}

\medskip

\section{Further examples, counterexamples and applications}\label{section5exam}
\subsection{Examples and counterexamples for uniqueness of (infinitesimally) invariant measures}
\begin{exam} \label{exam:3.1}
\rm
Let $A=id$, $\rho=1$, $\mu=dx$ and $\mathbf{G}=\mathbf{c}$, where $\mathbf{c}=(c_1, \dots, c_d) \in \R^d$ is a non-zero constant vector field on $\R^d$. Then {\bf (H)} holds and we have
$$
L f =L^{0, \mu} f + \langle \mathbf{c}, \nabla f \rangle=\frac12\Delta f+\langle \mathbf{c}, \nabla f \rangle, \quad f \in C_0^{\infty}(\R^d).
$$
$\mu=dx$ is an infinitesimally invariant measure for $(L, C_0^{\infty}(\R^d))$ since $\beta^{1, id}=0$ and
$$
\int_{\R^d} \langle \mathbf{c}, \nabla f \rangle dx = 0, \quad \forall f \in C_0^{\infty}(\R^d).
$$
$(T^{0, \mu}_t)_{t>0}$ is a version of the transition semigroup of Brownian motion and is hence recurrent if $d=2$, and transient if $d \geq 3$ (see Example \ref{exampleinva}(i)). Let $\widetilde{\rho}(x):=1+e^{2\langle \mathbf{c}, x \rangle}$, $x \in \R^d$. Then $\widetilde{\rho} \in H^{1,p}_{loc}(\R^d) \cap C(\R^d)$ with $\widetilde{\rho}(x)>0$ for all $x \in \R^d$. Set $\widetilde{\mu}:=\widetilde{\rho} dx$. Then since $\beta^{\widetilde{\rho}-1, id}$= $\mathbf{c}$, $\widetilde{\mu}$ is an infinitesimally invariant measure for $(L,C_0^{\infty}(\R^d))$. By \cite[Corollary 2.16(iii)]{LST20}, $(T^{\mu}_t)_{t>0}$ and  $(T^{\widetilde{\mu}}_t)_{t>0}$ are conservative, hence there is no finite infinitesimally invariant measure for $(L, C_0^{\infty}(\R^d))$ by Proposition \ref{prop1.22}(ii). Using \cite[Proposition 2.15(iii)]{LST20}, we can see that $\mu$ is an invariant measure for $(T^{\mu}_t)_{t>0}$, and $\widetilde{\mu}$ is an invariant measure for $(T^{\widetilde{\mu}}_t)_{t>0}$. 
Moreover, by Theorem \ref{pathuniquenesshunt}, for every $t>0$ it holds that 
\begin{equation} \label{uniqueness}
T^{\mu}_t f = T^{\widetilde{\mu}}_t f, \quad \text{$dx$-a.e.} \;\text{ for all } f \in  L^1(\R^d, dx) \cap L^1(\R^d, \widetilde{\mu}) \text{ and } t>0.
\end{equation}
By \eqref{uniqueness}, $(T^{\mu}_t)_{t>0}$ and $(T^{\widetilde{\mu}}_t)_{t>0}$ have hence two distinct invariant measures, $\mu$ and $\widetilde{\mu}$ which are not expressed by a constant multiple of each other. Using Theorem \ref{preinvlemma}, we have that $(T^{\mu}_t)_{t>0}$ and $(T^{\widetilde{\mu}}_t)_{t>0}$ cannot  be recurrent, hence they are both transient by Proposition \ref{strictirreducibilityprop}. On the other hand,
 by Theorem \ref{theo1.12}, for every $t>0$
$$
T'^{, \widetilde{\mu}}_t\big (\frac{1}{1+e^{2\langle c, x \rangle}}\big )=\frac{1}{1+e^{2\langle c, x \rangle}}
$$
and so we have found an explicit example for a bounded co-excessive function which is not a constant function. 
\end{exam}

\begin{exam} \label{twoprebothfinite}
\rm
Consider the situation of Example \ref{exam:3.1} with $d=2$. Then $\mu=dx$ is an infinitesimally invariant measure for $(L, C_0^{\infty}(\R^2))$ and it holds
$$
L f= L^{0, \mu} f +\langle \mathbf{G}-\beta^{1,id}, \nabla f \rangle= \frac12 \Delta f+ \langle \mathbf{c}, \nabla f \rangle, \quad f \in C_0^{\infty}(\R^2).
$$
By Example \ref{exampleinva}(i), $(T^{0, \mu}_t)_{t>0}$ is recurrent, but $(T^{\mu}_t)_{t>0}$ is transient by Example \ref{exam:3.1}.
Moreover, it holds
$$
 \left |\int_{\R^2} \langle \mathbf{c}, \nabla f \rangle g dx \right | \leq \|\mathbf{c}\| \|f\|_{H^{1,2}(\R^2)} \|g \|_{H^{1,2}(\R^2)}, \quad \text{ for all } f,g \in C_0^{\infty}(\R^2),
$$
hence, the generalized Dirichlet form $(\mathcal{E}, D(L^{\mu}_2))$ associated to $(T^{\mu}_t)_{t>0}$ on $L^2(\R^2, \mu)$ satisfies the {\it weak sector condition}, i.e.
$$
\sup_{u, v \in  C_0^{\infty}(\R^2) \setminus \{0 \} } \frac{|\int_{\R^2} Lu \cdot v d\mu|}{\mathcal{E}_1^{0, \mu}(u,u)^{1/2} \mathcal{E}_1^{0, \mu}(v,v)^{1/2}} <\infty.
$$
If $\mathcal{E}_1^{0, \mu}$ above can be replaced by $\mathcal{E}^{0, \mu}$ with the same strict inequality, then we say that $(\mathcal{E}, D(L^{\mu}_2))$ satisfies the {\it strong sector condition}. Since $(T^{0, \mu}_t)_{t>0}$ is recurrent but $(T^{\mu}_t)_{t>0}$ is transient, we obtain that $(\mathcal{E}, D(L^{\mu}_2))$ does not satisfy the strong sector condition by \cite[Theorem 1.6.3]{FOT}, \cite[Corollary 14(b)]{GT2} and Proposition \ref{strictirreducibilityprop}. Therefore, in contrast to the case of strong sector condition, we observe that recurrence of $(T^{0, \mu}_t)_{t>0}$ and the weak sector condition of $(\mathcal{E}, D(L^{\mu}_2))$ are not sufficient to obtain recurrence of $(T^{\mu}_t)_{t>0}$.
For another example in this regard, we refer to \cite[Example 25]{GT2}, where $d=1$.
\end{exam}

\begin{exam} \label{Exam:3.3}
\rm
The following gives a simple example for the existence of two distinct finite infinitesimally invariant measures. For another example, we refer to \cite[4.2.1 Example]{BKRS} (see also \cite[Proposition 4.8(i)]{BSR02}).\\[3pt]
Let $A(x)=e^{\|x\|^2} \cdot id$, $x \in \R^d$, $\mathbf{c}$ be a non-zero constant vector field on $\R^d$, $\mathbf{G}(x)= e^{\|x\|^2} \cdot \mathbf{c}$, $\rho(x)=e^{-\|x\|^2}$ and $\widetilde{\rho}(x):=e^{-\|x\|^2+ 2\langle \mathbf{c}, x \rangle}$, $x\ \in \R^d$. Then
$$
Lf(x) = e^{\|x\|^2} \left( \frac12 \Delta f(x) +\langle \mathbf{c}, \nabla f(x) \rangle \right), \quad \text{for all } f \in C_0^{\infty}(\R^d),\; x \in \R^d,
$$
and we can see from Example \ref{exam:3.1} that $\mu=\rho dx$ and $\widetilde{\mu}:=\widetilde{\rho} dx$ are finite infinitesimally invariant measures for $(L, C_0^{\infty}(\R^d))$. By Theorem \ref{preinvlemma}, Proposition \ref{strictirreducibilityprop} and Proposition \ref{fininvariant}, both $(T^{\mu}_t)_{t>0}$ and $(T_t^{\widetilde{\mu}})_{t>0}$ are transient and non-conservative, and both $\mu$ and $\widetilde{\mu}$ are not invariant measures for $(T^{\mu}_t)_{t>0}$ and $(T^{\widetilde{\mu}}_{t})_{t>0}$, respectively. Moreover, by Theorem \ref{pathuniquenesshunt}, $\widetilde{\mu}$ is not an invariant measure for $(T^{\mu}_t)_{t>0}$ and $\mu$ is not an invariant measure for $(T^{\widetilde{\mu}}_t)_{t>0}$.
\end{exam}

\begin{exam} \label{exam:3.4}
\rm
Let $\rho(x)=e^{-\|x\|^2}$, $x \in \R^d$ and $\mu=\rho dx$. Let $w(x):=\int_0^{x_1} e^{s^2} ds$, $x=(x_1, \ldots, x_d) \in \R^d$. Then $w \in C^2(\R^d)$ and
\begin{equation} \label{invariantmeasureequation}
\int_{\R^d} \langle \nabla w, \nabla \varphi  \rangle \rho dx = 0, \quad \text{ for all } \varphi \in C_0^{\infty}(\R^d).
\end{equation}
Let $A=id$, $\mathbf{G}=\frac{\nabla \rho}{2\rho} + \nabla w$. Since \eqref{invariantmeasureequation} holds, $\mu$ is a finite infinitesimally invariant measure for $(L, C_0^{\infty}(\R^d))$, where
$$
L=\frac12 \Delta + \langle \frac{\nabla \rho}{2 \rho}+ \nabla w, \nabla \rangle, \quad \text{ on $C_0^{\infty}(\R^d)$.}
$$
Now let 
$$
\widetilde{\rho}(x):=\rho(x) e^{2w(x)}=\exp\left( -\|x\|^2 +2\int_0^{x_1} e^{s^2} ds  \right), \quad x=(x_1, \ldots, x_d) \in \R^d.
$$ 
Then $\beta^{\widetilde{\rho},A}= \frac{\nabla \rho}{2\rho}+\nabla w$, hence $\widetilde{\mu}:=\widetilde{\rho} dx$ is an infinitesimally invariant (symmetrizing) measure for $(L, C_0^{\infty}(\R^d))$. By Theorem \ref{preinvlemma}, Proposition \ref{strictirreducibilityprop} and Proposition \ref{fininvariant}, $(T^{\mu}_t)_{t>0}$ must be transient and non-conservative. For each $t>0$ and $\mathbf{x}'\in \R^{d-1}$,
$$
\widetilde{\rho}(t, \mathbf{x}') =  \exp\left(-\|\mathbf{x}'\|^2-t^2 +2 \int_0^t e^{s^2} ds\right)  \longrightarrow \infty \quad \text{as }\, t \rightarrow \infty.
$$
Hence by Fubini's Theorem, $\widetilde{\mu}$ is an infinite measure.\\
Finally, by Theorem \ref{preinvlemma}, Proposition \ref{strictirreducibilityprop} and Proposition \ref{fininvariant}, both $(T^{\mu}_t)_{t>0}$ and $(T_t^{\widetilde{\mu}})_{t>0}$ are transient and non-conservative, and both $\mu$ and $\widetilde{\mu}$ are not invariant measures for $(T^{\mu}_t)_{t>0}$ and $(T^{\widetilde{\mu}}_{t})_{t>0}$, respectively. Moreover, by Theorem \ref{pathuniquenesshunt}, $\widetilde{\mu}$ is not an invariant measure for $(T^{\mu}_t)_{t>0}$ and $\mu$ is not an invariant measure for $(T^{\widetilde{\mu}}_t)_{t>0}$. Since $\mu$ is not an invariant measure for $(T^{\mu}_t)_{t>0}$ and $\widetilde{\mu}$ is not an invariant measure for $(T^{\widetilde{\mu}}_t)_{t>0}$, it follows by Remark \ref{equivconinv(iii)} that $(L, C_0^{\infty}(\R^d))$ is not $L^1(\R^d, \mu)$-unique and not $L^1(\R^d, \widetilde{\mu})$-unique. Then, since $\mu$ is finite, we can conclude from \cite[Lemma 1.6(ii)]{Eb} that $(L, C_0^{\infty}(\R^d))$ cannot be $L^r(\R^d, \mu)$-unique for each $r \in (1,2]$.
However, $(L, C_0^{\infty}(\R^d))$ is $L^r(\R^d, \widetilde{\mu})$-unique for each $r \in (1, 2]$
by \cite[Theorem 2.3]{Eb}.

\end{exam}

\begin{exam} \label{dualnotconser}
\rm
Let $i \in \{1, \ldots, d \}$ be fixed. Let $A=id$, and for $x=(x_1, \ldots, x_d) \in \R^d$, $\mathbf{G}(x)=(\frac{1}{2}+ \frac12 e^{-x_i})\mathbf{e}_i$, where $\mathbf{e}_i$ is the $i$-th standard unit vector in $\R^d$ and $\rho(x)=e^{x_i}$. Then {\bf(H)} is satisfied and $\mu=\rho dx$ is an infinitesimally invariant measure for $(L=\frac12 \Delta +\langle \mathbf{G}, \nabla \rangle, C_0^{\infty}(\R^d))$. By \cite[Corollary 2.16(iii)]{LST20}, $(T^{\mu}_t)_{t>0}$ is conservative, hence it follows from Proposition \ref{prop1.22}(ii) that there exists no finite infinitesimally invariant measure for $(L, C_0^{\infty}(\R^d))$. We claim that $(T'^{, \mu}_t)_{t>0}$ is non-conservative (so that $\mu$ is not an invariant measure for $(T^{\mu}_t)_{t>0}$ by Lemma \ref{equicondi32}(i)). To see the latter, consider a positive function $\Psi \in C_b^2((0, \infty))$ which is non-zero on $(0, \infty)$ and let $V(x)=\Psi(e^{-x_i})$, $x =(x_1,\ldots,x_d ) \in \R^d$. Then $V \in C_b^2(\R^d)$ is positive and a non-zero function. If there exists a constant $\alpha>0$ such that 
\begin{equation} \label{non-conservativecriteriast}
L'^{, \mu}V \geq \alpha V \;\;\; \text{ on $\R^d$},
\end{equation}
then what we claimed above follows from \cite[Proposition 2.18]{LST20}.
Note that
\begin{equation} \label{fundinequ}
L'^{, \mu}V= \frac12 \partial_{ii} V +(\frac12-\frac12 e^{-x_i} )  \partial_i V =\frac12 \left( \Psi''(e^{-x_i})+\Psi'(e^{-x_i}) \right) \cdot \left(e^{-x_i}\right)^2.
\end{equation}
Thus, in order to show \eqref{non-conservativecriteriast}, it suffices to find $\alpha>0$, such that 
$$
\left(\Psi''(y)+\Psi'(y) \right) y^2 \geq 2\alpha \Psi(y), \;\; \text{ for all } y>0.
$$
Define
$$
\Psi(y):=\left\{\begin{matrix}
 y^2(6-y) \quad &\text{ if }& 0 < y \leq 3, \\ 
54-\frac{81}{y} \quad &\text{ if }& 3 \leq y
\end{matrix}\right.
$$
Then $\Psi \in C_b^2((0, \infty))$ and as in \cite[Remark 2.19(ii)]{LST20}, it is easy to see that \eqref{fundinequ} holds with $\alpha =\frac{1}{4}$, so that the claim is proved.
\text{}\\
Let $\widetilde{\rho}(x):=e^{x_i-e^{-x_i}}$, $x=(x_1,\ldots,x_d)\in \R^d$. Then $\beta^{\widetilde{\rho}, id}=(\frac12 +\frac12 e^{-x_i}) \mathbf{e}_i=\mathbf{G}(x)$, hence $\widetilde{\mu}:=\widetilde{\rho} dx$ is another infinitesimally invariant symmetrizing) measure for $(L, C_0^{\infty}(\R^d))$. 
Thus by Proposition \ref{strictirreducibilityprop} and Theorem \ref{preinvlemma} $(T^{\mu}_t)_{t>0}$ must be transient, and then $(T'^{, \mu}_t)_{t>0}$ and $(T^{\widetilde{\mu}}_t)_{t>0}$ are also transient by Proposition \ref{strictirreducibilityprop} and Theorems \ref{cosemirec} and \ref{pathuniquenesshunt}.
By Theorem \ref{pathuniquenesshunt}, 
\begin{equation} \label{sameidenti}
T^{\mu}_t f=T^{\widetilde{\mu}}_t f  \quad \text{a.e.} \;\text{ for all } f \in  L^1(\R^d, \mu) \cap L^1(\R^d, \widetilde{\mu}) \text{ and } t>0.
\end{equation}
Thus since $(T^{\mu}_t)_{t>0}$ is conservative, so is $(T^{\widetilde{\mu}}_t)_{t>0}$. Since $(T^{\widetilde{\mu}}_t)$ is associated with the symmetric Dirichlet form $(\mathcal{E}^{0, \widetilde{\mu}}, D(\mathcal{E}^{0, \widetilde{\mu}}))$, we obtain $(T^{\widetilde{\mu}}_t)_{t>0}=(T'^{, \widetilde{\mu}}_t)_{t>0}$ on  $L^2(\R^d, \widetilde{\mu})$, hence $\widetilde{\mu}$ is an invariant measure for $(T^{\widetilde{\mu}}_t)_{t>0}$ by Lemma \ref{equicondi32}(i), so that $\widetilde{\mu}$ is an invariant measure for $(T^{\mu}_t)_{t>0}$ by \eqref{sameidenti}. By Theorem \ref{theo1.12}, we obtain a bounded co-excessive function $\frac{\widetilde{\rho}}{\rho}=e^{-e^{-x_i}} \in C^{\infty}(\R^d) \cap \mathcal{B}_b(\R^d)$ for $(T^{\mu}_t)_{t>0}$, i.e.
\begin{equation} \label{excessivefun}
T'^{, \mu}_t\left(\frac{\widetilde{\rho}}{\rho}\right)  =\frac{\widetilde{\rho}}{\rho}, \;\; \text{$\mu$-a.e.} \;\; \forall t>0.
\end{equation}
We finally mention that by Remark \ref{equivconinv(iii)},  $(L, C_0^{\infty}(\R^d))$ is $L^1(\R^d, \widetilde{\mu})$-unique but not $L^1(\R^d, \mu)$-unique. In other words, $(T^{\widetilde{\mu}}_t)_{t>0}$ is the unique $C_0$-semigroup on $L^1(\R^d, \widetilde{\mu})$ whose generator extends $(L, C_0^{\infty}(\R^d))$, but there exists a $C_0$-semigroup $(\mathcal{T}^{\mu}_t)_{t>0}$ on $L^1(\R^d, \mu)$ whose generator extends $(L, C_0^{\infty}(\R^d))$ and $h \in \mathcal{B}_b(\R^d)_0 \subset L^1(\R^d, \mu) \cap L^1(\R^d, \widetilde{\mu})$, $t_0>0$ such that
$$
\mathcal{T}^{\mu}_{t_0} h \neq T^{\mu}_{t_0} h = T^{\widetilde{\mu}}_{t_0} h, \quad \text{$\mu$-a.e.}
$$
Here we emphasize the following: it follows from \cite[Proposition 3.51, Theorem 3.52]{LST20} that pathwise uniqueness and strong existence holds for the following It\^{o}-SDE related to $(L, C_0^{\infty}(\R^d))$
\begin{equation} \label{sdebrown}
X_t =x+W_t + \int_0^t \mathbf{G}(X_s)ds,  \quad 0 \leq t < \infty, \ \ x\in \R^d
\end{equation}
and from \cite[Theorem 3.52]{LST20} that $(T^{\mu}_t)_{t>0}$ is a version (on $\mathcal{B}_b(\R^d))$ of the transition semigroup of the pathwise unique and strong solution to \eqref{sdebrown}.  But $L^1(\R^d, \mu)$-uniqueness of $(L, C_0^{\infty}(\R^d))$ does not hold. 
\end{exam}

\begin{exam} \label{appstaeple} \rm
For a fixed $i \in \{1, \ldots, d \}$, let $A=id$, $\mathbf{G}(x)=(\frac{1}{2}- \frac12 e^{-x_i})\mathbf{e}_i$ and $\rho(x)=e^{x_i}$, $x=(x_1, \ldots, x_d) \in \R^d$. Then $\mu=\rho dx$ is an infinitesimally invariant measure for $(L, C_0^{\infty}(\R^d))$. Since here $(T^{\mu}_t)_{t>0}$ coincides with $(T'^{, \mu}_t)_{t>0}$ of Example \ref{dualnotconser}, $(T^{\mu}_t)_{t>0}$ is transient and non-conservative and $(T'^{, \mu}_t)_{t>0}$ is transient and conservative, hence $\mu$ is an invariant measure for $(T^{\mu}_t)_{t>0}$ by Lemma \ref{equicondi32}(i). Let $\widetilde{\rho}(x)=e^{x_i+e^{-x_i}}$, $x=(x_1,\ldots,x_d)\in \R^d$. Then $\beta^{\widetilde{\rho}, id}=(\frac12 -\frac12 e^{-x_i})\mathbf{e}_i$, hence $\widetilde{\mu}=\widetilde{\rho} dx$ is an infinitesimally invariant symmetrizing measure  for $(L, C_0^{\infty}(\R^d))$.  By Theorem \ref{pathuniquenesshunt}, 
\begin{equation} \label{sameidenti2}
T^{\mu}_t f=T^{\widetilde{\mu}}_t f  \quad \text{a.e.} \;\text{ for all } f \in  L^1(\R^d, dx) \cap L^1(\R^d, \widetilde{\mu}) \text{ and } t>0,
\end{equation}
hence $(T^{\widetilde{\mu}}_t)_{t>0}$ is non-conservative. Since $(T_t^{\widetilde{\mu}})_{t>0}$ is associated to the symmetric Dirichlet form ($\mathcal{E}^{0, \widetilde{\mu}}, D(\mathcal{E}^{0, \widetilde{\mu}}))$,  we obtain $(T^{\widetilde{\mu}}_t)_{t>0}=(T'^{, \widetilde{\mu}}_t)_{t>0}$ on  $L^2(\R^d, \widetilde{\mu})$, so that $\widetilde{\mu}$ is not an invariant measure for $(T^{\widetilde{\mu}}_t)_{t>0}$ by Lemma \ref{equicondi32}(i).  By \eqref{sameidenti2} again, $\widetilde{\mu}$ is not an invariant measure but a sub-invariant measure for $(T^{\mu}_t)_{t>0}$. In particular, by Theorem \ref{theo1.12} and \eqref{sameidenti2}, there exists an excessive function $\frac{\rho}{\widetilde{\rho}}= e^{-e^{-x_i}} \in C^{\infty}(\R^d) \cap \mathcal{B}_b(\R^d)$ for $(T^{\widetilde{\mu}}_t)_{t>0}$ and $(T^{\mu}_t)_{t>0}$ such that
$$
T^{\mu}_t \left( \frac{\rho}{\widetilde{\rho}} \right)=T^{\widetilde{\mu}}_t \left( \frac{\rho}{\widetilde{\rho}} \right) =T'^{,\widetilde{\mu}}_t \left( \frac{\rho}{\widetilde{\rho}} \right) =\frac{\rho}{\widetilde{\rho}}, \quad \; \forall t>0,
$$
which is ultimately the same as the one in \eqref{excessivefun}. By Remark \ref{equivconinv(iii)}, $(L, C_0^{\infty}(\R^d))$ is $L^1(\R^d, \mu)$-unique, but not $L^1(\R^d, \widetilde{\mu})$-unique, hence there exists a $C_0$-semigroup $(\mathcal{T}^{\widetilde{\mu}}_t)_{t>0}$ on $L^1(\R^d, \widetilde{\mu})$ whose generator extends $(L, C_0^{\infty}(\R^d))$ and $h \in \mathcal{B}_b(\R^d)_0 \subset L^1(\R^d, \widetilde{\mu}) \cap L^1(\R^d, \widetilde{\mu})$, $t_0>0$ such that
$$
\mathcal{T}^{\widetilde{\mu}}_{t_0} h \neq T^{\widetilde{\mu}}_{t_0} h= T^{\mu}_{t_0} h.
$$
By \cite[Theorem 2.3 and Lemma 1.6(i)]{Eb}, $(L, C_0^{\infty}(\R^d))$ is $L^r(\R^d, \widetilde{\mu})$-unique for all $r \in (1,2]$, hence strongly Markov unique, i.e. $(T^{\widetilde{\mu}}_t)_{t>0}$ is the unique sub-Markovian $C_0$-semigroup on $L^1(\R^d, \widetilde{\mu})$ whose generator extends $(L, C_0^{\infty}(\R^d))$. Therefore $(\mathcal{T}^{\widetilde{\mu}}_t)_{t>0}$ can not be sub-Markovian.
\end{exam}

\text{}\\
\subsection{Applications} \label{4.2appli}
Here, we introduce some applications of our results. Under the assumption {\bf (H)}, let $\mu$ be an infinitesimally invariant measure for $(L, C_0^{\infty}(\R^d))$. Then by \cite[Proposition 3.10]{LT18} (see also \cite[Theorem 3.11]{LST20}), there exists a regularized semigroup $(P^{\mu}_t)_{t>0}$ such that for any $f \in \cup_{r \in [1, \infty]} L^r(\R^d, \mu)$ and $t>0$
$$
P^{\mu}_{\cdot} f \in C(\R^d \times (0, \infty)) \quad \text{ and }\quad  P^{\mu}_t f = T^{\mu}_t f  \ \text{ $\mu$-a.e.}
$$
Moreover, by \cite[Theorem 3.12]{LT18} there exists a Hunt process
$$
\M=(\Omega, \mathcal{F}, (\mathcal{F}_t)_{t \geq 0}, (X_t)_{t \geq 0}, (\P_x)_{x \in \R^d \cup \Delta})
$$
such that $(P^{\mu}_t)_{t>0}$ is the transition semigroup of $\M$, i.e.
\begin{equation} \label{consemi}
P^{\mu}_t f(x)  = \E_x[f(X_t)] \quad \text{ for all }  \ f \in \mathcal{B}_b(\R^d), \ x \in \R^d, \ t>0.
\end{equation}
Now assume that $(T^{\mu}_t)_{t>0}$ is conservative. Then $\M$ is non-explosive by \cite[Corollary 3.23]{LST20}, hence by \cite[Theorem 3.19(i)]{LT18} (see also \cite[Theorem 3.22(i)]{LST20}), $\M$ weakly solves $\P_x$-a.s. for any $x \in \R^d$ the following It\^{o}-SDE 
\begin{equation} \label{weaksolusde}
X_{t} = x+\int_0^{t} \sigma(X_s) dW_s +\int_0^{t} \mathbf{G}(X_s) ds, \quad 0\leq t<\infty,
\end{equation}
where $\sigma=(\sigma_{ij})_{1 \leq i,j \leq d}$ is an arbitrary matrix of continuous functions with $A=\sigma \sigma^T$ and $(W_t)_{t \geq 0}$ is a standard $(\mathcal{F}_t)_{t \geq 0}$-Brownian motion on $(\Omega,\mathcal{F}, \P_x )$. Additionally, if $\sigma_{ij} \in H^{1,p}_{loc}(\R^d)$ for all $1 \leq i,j \leq d$, then by \cite[Theorem 3.52]{LST20}
 any weak solution to \eqref{weaksolusde} has the same law as $\P_x \circ X^{-1}$, hence the transition semigroup generated by an arbitrary weak solution to \eqref{weaksolusde} for all $x \in \R^d$ coincides with  $(P^{\mu}_t)_{t>0}$ on $\mathcal{B}_b(\R^d)$ and therefore inherits all properties from 
$(T^{\mu}_t)_{t>0}$ developed in this article and \cite{LST20}.
\\
\text{}\\
In \cite[Chapter 2.2]{LB07} and \cite[Section 4]{MPW02}, without using the existence of an infinitesimally invariant measure for $(L, C_0^{\infty}(\R^d))$ as we did, and under the assumption that the coefficients of $L$ are locally H\"{o}lder continuous on $\R^d$ and $A=(a_{ij})_{1 \leq i,j \leq d}$ is locally uniformly strictly elliptic (cf. \eqref{use}), a sub-Markovian strong Feller semigroup $(T(t))_{t>0}$ on $\mathcal{B}_b(\R^d)$ is constructed, and given $f \in C_b(\R^d)$ it holds that $u_f:=T(\cdot) f \in C_b(\R^d \times [0, \infty)) \cap C^{2+\alpha, 1+\frac{\alpha}{2}}(\R^d \times (0, \infty))$ and that $u_f$ solves the following Cauchy problem pointwisely
\begin{equation} \label{solcauchypro}
\partial_t u_f   = L u_f =\frac12 \sum_{i,j=1}^d a_{ij}\partial_{ij} u_f+\sum_{i=1}^d g_i \partial_i u_f\;\; \text{ in $\R^d \times (0, \infty)$},\quad u_f(\cdot, 0)=f  \text{ in $\R^d$}.
\end{equation}

\begin{prop} \label{equivsemi}
Assume {\bf (H)} and that the components of $\mathbf{G}$ are locally H\"{o}lder continuous. If $(T^{\mu}_t)_{t>0}$ is conservative, then $(T(t))_{t>0} =(P^{\mu}_t)_{t>0}$ on $\mathcal{B}_b(\R^d)$ where $(T(t))_{t>0}$ 
and $(P^{\mu}_t)_{t>0}$ are defined right above.
\end{prop}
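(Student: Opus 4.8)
The plan is to realize both $(T(t))_{t>0}$ and $(P^{\mu}_t)_{t>0}$ as the transition semigroup of one and the same non-explosive diffusion, thereby reducing everything to the (local) well-posedness of the SDE \eqref{weaksolusde}. Fix $\sigma:=A^{1/2}$, the symmetric positive square root of $A$. Since $A$ satisfies {\bf (H)} --- so on every ball $B$ the eigenvalues of $A$ stay in the compact set $[\lambda_B,\Lambda_B]\subset(0,\infty)$ and $a_{ij}\in H^{1,p}_{loc}(\R^d)\cap C(\R^d)$ --- and the square-root map is $C^{\infty}$ on positive definite matrices, the chain rule yields $\sigma_{ij}\in H^{1,p}_{loc}(\R^d)\cap C(\R^d)$ with $\sigma\sigma^T=A$. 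Because $(T^{\mu}_t)_{t>0}$ is conservative, $\M$ is non-explosive (\cite[Corollary 3.23]{LST20}) and weakly solves \eqref{weaksolusde} $\P_x$-a.s.\ for every $x$ (\cite[Theorem 3.19(i)]{LT18}); applying It\^{o}'s formula to $f\in C_0^{\infty}(\R^d)$ shows that $(X_t,\P_x)$ solves the martingale problem for $(L,C_0^{\infty}(\R^d))$ on $[0,\infty)$. On the analytic side, under {\bf (H)} the $a_{ij}$ are locally H\"{o}lder continuous (Sobolev embedding $H^{1,p}_{loc}\hookrightarrow C^{0,1-d/p}_{loc}$, $p>d$) and the $g_i$ are so by assumption, so we are in the framework of \cite[Chapter 2.2]{LB07} and \cite[Section 4]{MPW02}: there $(T(t))_{t>0}$ is the transition semigroup of the minimal diffusion $(Y_t)_{0\le t<\zeta}$ associated with $L$, which solves the martingale problem for $L$ on $[0,\zeta)$.

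It therefore suffices to show that $(Y_t)_{0\le t<\zeta}$ is non-explosive, i.e.\ $\zeta=\infty$ $\P_x$-a.s. Indeed, once this is known, $Y$ is a genuine (global) weak solution of \eqref{weaksolusde}, so by \cite[Theorem 3.52]{LST20} (quoted in the paragraph preceding this proposition) its transition semigroup $(T(t))_{t>0}$ coincides with $(P^{\mu}_t)_{t>0}$ on $\mathcal{B}_b(\R^d)$, which is the assertion. To prove non-explosion of $Y$ I would use that pathwise uniqueness for \eqref{weaksolusde} is \emph{local}: since $\sigma_{ij}\in H^{1,p}_{loc}(\R^d)$ and $\mathbf{G}\in L^p_{loc}(\R^d,\R^d)$ with $p>d$, a Yamada--Watanabe localization of \cite[Theorem 3.52]{LST20} gives pathwise uniqueness up to the exit time $\tau_{B_n}$ of every ball $B_n$. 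Hence, started at $x$, the process $X$ stopped at $\tau^X_{B_n}$ and the process $Y$ stopped at $\tau^Y_{B_n}$ have the same law, so $\P_x(\tau^Y_{B_n}\le t)=\P_x(\tau^X_{B_n}\le t)$ for all $t>0$ and $n$. Since $\M$ is non-explosive, $\tau^X_{B_n}\uparrow\infty$ $\P_x$-a.s., whence $\P_x(\tau^Y_{B_n}\le t)\to 0$ as $n\to\infty$; because $\{\zeta\le t\}=\bigcap_n\{\tau^Y_{B_n}\le t\}$ is a decreasing intersection, this forces $\P_x(\zeta\le t)=0$ for every $t$, i.e.\ $\zeta=\infty$ $\P_x$-a.s.

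The main obstacle is precisely this last point: a priori $(T(t))_{t>0}$ is only sub-Markovian, and one must rule out explosion of the minimal diffusion --- equivalently, establish that $(T(t))_{t>0}$ is conservative. This is where the hypothesis enters, through conservativeness of $(T^{\mu}_t)_{t>0}$ (hence non-explosiveness of $\M$), combined with the fact that pathwise uniqueness for \eqref{weaksolusde} is available locally and can thus be propagated along the exhaustion $B_n\uparrow\R^d$. A purely analytic alternative would be to note that for $f\in C_0^{\infty}(\R^d)$ both $T(\cdot)f$ and $P^{\mu}_{\cdot}f$ are bounded classical solutions of the Cauchy problem \eqref{solcauchypro} --- for $P^{\mu}_{\cdot}f$ one uses that $P^{\mu}_t f=T^{\mu}_t f$ $\mu$-a.e.\ is a weak solution of $\partial_t u=Lu$ and then interior parabolic Schauder estimates, valid since the coefficients are locally H\"{o}lder --- and to conclude by uniqueness of bounded solutions of \eqref{solcauchypro}; but that uniqueness is itself equivalent to conservativeness of $(T(t))_{t>0}$, so the same obstacle resurfaces and is again resolved as above.
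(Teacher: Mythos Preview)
Your argument is essentially correct, but it takes a considerably more roundabout route than the paper and leans on auxiliary facts that, while standard, are not immediate from the stated references: that $(T(t))_{t>0}$ is the transition semigroup of a minimal diffusion $Y$ (the construction in \cite{MPW02}, \cite{LB07} is purely analytic, so this probabilistic identification would still need to be argued), and that a localized Yamada--Watanabe/pathwise uniqueness version of \cite[Theorem 3.52]{LST20} holds up to exit times from balls. Both can be established, but they add nontrivial overhead, and your analytic alternative, as you yourself note, is circular.

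The paper bypasses all of this. It never realizes $(T(t))_{t>0}$ as a transition semigroup and does not use pathwise uniqueness at all. It exploits only two facts: the process $\M$ with semigroup $(P^{\mu}_t)_{t>0}$ is non-explosive and solves \eqref{weaksolusde}, and for $f\in C_0(\R^d)$ the function $v_f(y,t):=T(t_0-t)f(y)$ is a bounded classical solution of the \emph{backward} equation $\partial_t v_f+Lv_f=0$ on $\R^d\times[0,t_0)$ with terminal datum $f$ (this is \eqref{solcauchypro}). Applying the time-dependent It\^{o} formula to $v_f(X_t,t)$ along $\M$, localized by $\sigma_n:=\inf\{t>0: X_t\notin B_n\}$, makes $t\mapsto v_f(X_{t\wedge\sigma_n},t\wedge\sigma_n)$ a bounded martingale, whence $\E_x[v_f(X_{t\wedge\sigma_n},t\wedge\sigma_n)]=v_f(x,0)=T(t_0)f(x)$; letting $t\to t_0-$ and then $n\to\infty$ (using only non-explosion of $\M$) gives $T(t_0)f(x)=\E_x[f(X_{t_0})]=P^{\mu}_{t_0}f(x)$. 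This classical martingale verification argument takes half a page. Your route, by contrast, effectively reproves conservativeness of the minimal semigroup via a stochastic comparison with $\M$; this detour is unnecessary once one observes that the It\^{o} computation can simply be run along the already-conservative process $\M$ against the classical solution furnished by $(T(t))_{t>0}$.
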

\begin{proof}
Let $x \in \R^d$ and $n \in \N$ with $n \geq \|x\|+1$. Define $\sigma_n:=\inf \{t>0: X_t \in \R^d \setminus B_n \}$. Given $i \in \{1, \ldots, d\}$, let $X^{i, n}_t:= X^i_{t \wedge \sigma_n}$ where $X_t=(X^1_t, \ldots, X^d_t)$. From \eqref{weaksolusde}, for any $i,j \in \{1, \ldots, d \}$ it follows
$$
\langle X^{i,n}, X^{j,n} \rangle_t  = \int_0^{t \wedge \sigma_n} a_{ij}(X_s) ds, \quad 0\leq t< \infty, \;\; \text{$\P_x$-a.s.}
$$
Given $f \in C_0(\R^d)$ and $t_0>0$, define
$$
v_f(y,t):=T(t_0-t)f(y), \quad y \in \R^d, \;0 \leq t \leq t_0.
$$
Then by \eqref{solcauchypro}, $v_f \in C_b(\R^d \times [0, t_0]) \cap C^{2,1}(\R^d \times [0, t_0))$, $v_f(y,t_0)=f(y)$ for any $y \in \R^d$ and
\begin{equation*}
\partial_t v_f + Lv_f = 0 \quad \text{ in $\R^d \times (0, t_0)$}.
\end{equation*}
By the time-dependent It\^{o}-formula and \eqref{weaksolusde}, for any $0\leq t< t_0$,
$$
v_f(X_{t \wedge \sigma_n}, t \wedge \sigma_n) -v_f(x,0) = \int_0^{t \wedge \sigma_n} \nabla v_f(X_s,s) \sigma(X_s) dW_s +\int_0^{t \wedge \sigma_n} \left(\partial_t v_f + Lv_f \right) (X_s, s) ds, \quad \text{$\P_x$-a.s.}
$$
Thus, for any $0 \leq t<t_0$,
$$
\E_x \left[ v_f (X_{t \wedge \sigma_n}, t \wedge \sigma_n)\right]  = \E_x\left[ v_f(x, 0) \right] =T(t_0) f(x)
$$
Letting $t \rightarrow t_0-$, 
$$
T(t_0)f(x)=\E_x \left[ v_f(X_{t_0 \wedge \sigma_n}, t_0 \wedge \sigma_n) \right].
$$
Since $\M$ is non-explosive, we have by \cite[Lemma 3.17(i), Definition 3.21]{LST20} $\P_x(\lim_{n\to \infty}\sigma_n=\infty)=1$. Thus, it holds
\begin{eqnarray*}
T(t_0)f(x)&=&\E_x \left[ v_f(X_{t_0}, t_0) \right] = \E_x \left[f(X_{t_0}) \right] =P^{\mu}_{t_0} f(x).
\end{eqnarray*}
Since $x \in \R^d$,  $t_0>0$ are arbitrary and the $\sigma$-algebra generated by $C_0(\R^d)$ equals $\mathcal{B}(\R^d)$, the assertion follows from a monotone class argument.
\end{proof}

\begin{rem} \rm
\begin{itemize}
\item[(i)]
In \cite{LST20} diverse explicit conditions on the coefficients $A$ and $\mathbf{G}$ to obtain conservativeness of $(T^{\mu}_t)_{t>0}$ are presented. For instance, by \cite[Corollaries 3.23 and 3.27]{LST20}, if there exist constants $M>0$ and  $N_0 \in \N$ such that
\begin{eqnarray} \label{nonexpcondi}
-\frac{\langle A(x)x, x \rangle}{ \left \| x \right \|^2 }+ \frac12\mathrm{trace}A(x)+ \big \langle \mathbf{G}(x), x \big \rangle \leq M\left  \| x \right \|^2 \big( \ln \left \| x \right \| +1      \big)
\end{eqnarray}
for a.e. $x\in \R^d\setminus \overline{B}_{N_0}$, then $(T^{\mu}_t)_{t>0}$ is conservative. 
Under the assumptions of Proposition \ref{equivsemi}, the results in this article and \cite{LST20} apply to the sub-Markovian strong Feller semigroup $(T(t))_{t>0}$ constructed in \cite[Chapter 2.2]{LB07} (or \cite[Section 4]{MPW02}). More precisely, not only our semigroup $(T^{\mu}_t)_{t>0}$ has all properties inherited from $(T(t))_{t>0}$ in \cite{LB07} (or \cite{MPW02}) if the components of our $\mathbf{G}$ are locally H\"{o}lder continuous, but also $(T(t))_{t>0}$ has new properties inherited from $(T^{\mu}_t)_{t>0}$ in this article and \cite{LST20} if $A$ in \cite{LB07} (or \cite{MPW02}) has components which are in $H^{1,p}_{loc}(\R^d)$ for some $p>d$.
 For instance, in \cite[Chapter 8]{LB07} (or \cite[Section 6]{MPW02}), existence of an invariant probability measure is derived under certain conditions using the Krylov-Bogoliubov Theorem (\cite[Theorem 8.1.19]{LB07}), while existence, non-existence, uniqueness and non-uniqueness of invariant measures for $(T(t))_{t>0}$ can be studied through the properties of $(T^{\mu}_t)_{t>0}$ and under the assumptions of Proposition \ref{equivsemi}.
\item[(ii)]
Assume that
$$
Lf = \text{trace}(Q \nabla^2 f) + \langle Bx, \nabla f \rangle, \quad  f\in C_0^{\infty}(\R^d)
$$
where $Q=(q_{ij})_{1 \leq i,j \leq d}$ is a symmetric and strictly positive definite matrix with constant $q_{ij} \in \R$ for all $1 \leq i,j \leq d$  and  $B=(b_{ij})_{1 \leq i,j \leq d}$ is a non-zero matrix with constant $b_{ij}\in \R$ for all $1 \leq i,j \leq d$. 
The transition semigroup $(P^{\mu}_t)_{t>0}$ of $\M$ (which is a regularized $\mu$-version of $(T^{\mu}_t)_{t>0}$)
can be identified as the Ornstein--Uhlenbeck semigroup $(T(t))_{t>0}$ of \cite[(2.4)]{LMP20} (or \cite[(9.1.5)]{MPW02}), where for $t>0$ and $x \in \R^d$
\begin{equation*} 
T(t)f(x) := \frac{1}{{(4\pi)}^{d/2} \text{det}(Q_t)^{1/2}}
\int_{\R^d} e^{-\langle Q_t^{-1}y, y\rangle/4} f(e^{tB}x-y) dy
\end{equation*}
and $Q_t:=\int_0^t e^{\tau B} Q e^{\tau B^T} d\tau$. Indeed, for a matrix $\sigma=(\sigma_{ij})_{1 \leq i,j \leq d}$ satisfying $\frac12 \sigma \sigma^T=Q$ with constant $\sigma_{ij} \in \R$ for all $1 \leq i,j \leq d$ and for a standard Brownian motion $(\widetilde{W}_t)_{t \geq 0}$ on $(\widetilde{\Omega},\widetilde{\mathcal{F}}, \widetilde{\P} )$, the unique strong solution $(Y^x_t)_{t \geq 0}$ to the following SDE with the initial condition $x \in \R^d$
$$ 
Y^x_t = x + \sigma \widetilde{W}_t + \int_0^t BY^x_s ds, \quad 0 \leq t <\infty, \; \; \widetilde{\P} \text{-a.s.}
$$
has the closed form 
$$ 
Y^x_t = e^{tB} x +\int_0^t e^{(t-s) B} \sigma d\widetilde{W}_s, \quad 0 \leq t<\infty.
$$
Then $A:=2Q$ and $\mathbf{G}(x):=Bx$ satisfy  assumption {\bf(H)} and \eqref{nonexpcondi} and it is known that
$$  
T(t) f(x)  = \widetilde{\E}[f(Y^x_t)], \quad \forall x \in \R^d, f \in C_b(\R^d),
$$
where $\widetilde{\E}$ is the expectation with respect to $\widetilde{\P}$ (see \cite[Section 2]{LMP20} or \cite[Section 9.1]{MPW02}).  On the other hand, by \eqref{consemi} and \cite[Theorem 3.52]{LST20} 
$$
P^{\mu}_tf(x)  = \E_x[f(X_t)]= \widetilde{\E}[f(Y^x_t)], \quad \forall x \in \R^d, f \in C_b(\R^d),
$$
Hence as in (i), $(P^{\mu}_t)_{t>0}$ shares various properties with $(T(t))_{t>0}$. 
\end{itemize}
\end{rem}

\centerline{}
Haesung Lee, Gerald Trutnau\\
Department of Mathematical Sciences and \\
Research Institute of Mathematics of Seoul National University,\\
1 Gwanak-Ro, Gwanak-Gu,
Seoul 08826, South Korea,  \\
E-mail: fthslt14@gmail.com, trutnau@snu.ac.kr
\end{document}